\documentclass[10pt]{amsart}

\usepackage{amsfonts,amssymb,amsmath,amscd,amstext}
\usepackage[utf8]{inputenc}
\usepackage{microtype}
\usepackage{graphicx}
\usepackage[colorlinks=true,linkcolor=blue,citecolor=blue]{hyperref}
\usepackage{tikz}
\usetikzlibrary{backgrounds}
\usetikzlibrary{arrows}
\usetikzlibrary{shapes,shapes.geometric,shapes.misc}

\tikzstyle{tikzfig}=[baseline=0em,scale=1]

\pgfkeys{/tikz/tikzit fill/.initial=0}
\pgfkeys{/tikz/tikzit draw/.initial=0}
\pgfkeys{/tikz/tikzit shape/.initial=0}
\pgfkeys{/tikz/tikzit category/.initial=0}

\pgfdeclarelayer{edgelayer}
\pgfdeclarelayer{nodelayer}
\pgfsetlayers{background,edgelayer,nodelayer,main}

\tikzstyle{none}=[inner sep=0mm]

\newcommand{\tikzfig}[1]{%
{\tikzstyle{every picture}=[tikzfig]
\IfFileExists{#1.tikz}
  {\input{#1.tikz}}
  {%
    \IfFileExists{./figures/#1.tikz}
      {\input{./figures/#1.tikz}}
      {\tikz[baseline=-0.5em]{\node[draw=red,font=\color{red},fill=red!10!white] {\textit{#1}};}}%
  }}%
}


\tikzstyle{every loop}=[]

\tikzstyle{black dot}=[fill=black, draw=black, shape=circle, scale=0.3]

\tikzstyle{arrow}=[->]
\tikzstyle{dashed_line}=[-, dashed]

\graphicspath{{pictures/}}

\renewcommand{\leq}{\leqslant}
\renewcommand{\geq}{\geqslant}
\renewcommand{\le}{\leqslant}
\renewcommand{\ge}{\geqslant}
\newcommand{\ptl}{\partial}

\newcommand{\rr}{{\mathbb{R}}} 

\newcommand{\bb}{{\mathbb{B}}} 
\newcommand{\hh}{{\mathbb{H}}}
\newcommand{\nn}{{\mathbb{N}}} 
\newcommand{\sph}{{\mathbb{S}}}
\newcommand{\hhh}{{\mathcal{H}}}
\newcommand{\la}{\lambda}

\newcommand{\sg}{\sigma}
\newcommand{\Om}{\Omega}
\newcommand{\eps}{\varepsilon}

\newcommand{\ga}{\gamma}
\newcommand{\Ga}{\Gamma}

\newcommand{\escpr}[1]{\langle#1\rangle}
\newcommand{\norm}[1]{|| #1 ||}

\DeclareMathOperator{\divv}{div}
\DeclareMathOperator{\intt}{int}

\newtheorem{theorem}{Theorem}[section]
\newtheorem{proposition}[theorem]{Proposition}
\newtheorem{lemma}[theorem]{Lemma}
\newtheorem{corollary}[theorem]{Corollary}

\theoremstyle{definition}

\newtheorem{remark}[theorem]{Remark}
\newtheorem{example}[theorem]{Example}

\newtheorem{definition}[theorem]{Definition} 

\theoremstyle{remark}

\numberwithin{equation}{section}

\setcounter{tocdepth}{1}



\begin{document}

\title{Pansu-Wulff shapes in $\hh^1$}

\author[J.~Pozuelo]{Juli\'an Pozuelo} 


\author[M.~Ritor\'e]{Manuel Ritor\'e} 

\address{Departamento de Geometr\'{\i}a y Topolog\'{\i}a \& Research Unit MNat \\
Universidad de Granada \\ E--18071 Granada \\ Espa\~na}

\email{pozuelo@ugr.es}
\email{ritore@ugr.es}

\date{\today}

\thanks{The authors have been supported by MEC-Feder grant MTM2017-84851-C2-1-P, Junta de Andalucía grant A-FQM-441-UGR18, MSCA GHAIA, and Research Unit MNat UCE-PP2017-3. This research was also funded by the Consejería de economía, conocimiento, empresas y universidad and European Regional Development Fund (ERDF), ref. SOMM17/6109/UGR} 
\subjclass[2000]{53C17, 49Q20}
\keywords{}
 
\begin{abstract} 
We consider an asymmetric left-invariant norm $||\cdot ||_K$ in the first Heisenberg group $\mathbb{H}^1$ induced by a convex body $K\subset\mathbb{R}^2$ containing the origin in its interior. Associated to $\norm{\cdot}_K$ there is a perimeter functional, that coincides with the classical sub-Riemannian perimeter in case $K$ is the closed unit disk centered at the origin of $\rr^2$. Under the assumption that $K$ has $C^2$ boundary with strictly positive geodesic curvature we compute the first variation formula of perimeter for sets with $C^2$ boundary. The localization of the variational formula in the non-singular part of the boundary, composed of  the points where the tangent plane is not horizontal, allows us to define a mean curvature function $H_K$ out of the singular set. In the case of non-vanishing mean curvature, the condition that $H_K$ be constant implies that the non-singular portion of the boundary is foliated by horizontal liftings of translations of $\ptl K$ dilated by a factor of $1/H_K$. Based on this we can define a sphere $\mathbb{S}_K$ with constant mean curvature $1$  by considering the union of all horizontal liftings of $\partial K$ starting from $(0,0,0)$ until they meet again in a point of the vertical axis. We give some geometric properties of this sphere and, moreover, we prove that, up to non-homogeneous dilations and left-translations, they are the only  solutions of the sub-Finsler isoperimetric problem in a restricted class of sets.
\end{abstract}

\maketitle

\thispagestyle{empty}

\bibliographystyle{abbrv} 

\tableofcontents

\section{Introduction}

In this paper we consider critical points of the perimeter associated to an asymmetric sub-Finsler structure in the first Heisenberg group $\hh^1$. Such a structure is defined by means of an asymmetric left-invariant norm $\norm{\cdot}$ on the horizontal distribution $\hhh$ of $\hh^1$, that is referred to in this paper simply as a norm. 
If we fix any basis of left-invariant horizontal vector fields, any left-invariant norm is uniquely determined by a convex body (compact set with non-empty interior) $K\subset\rr^2$ containing $0$ in its interior. We write $\norm{\cdot}_K$ to indicate the dependence of the norm on $K$. The case of a symmetric norm corresponds to a centrally symmetric convex body. The norm associated to the closed unit disc $D$ centered at $0$ coincides with the Euclidean norm and  is denoted by $|\cdot |$.
Symmetric sub-Finsler structures in $\hh^1$ have received intense interest recently, specially the study of geodesics \cite{MR3657277,MR3908389}, see \cite{MR1867362} for the classical sub-Riemannian case, and the associated Minkowski content \cite{snchez2017subfinsler,1711.01585}. General asymmetric sub-Finsler structures have an associated asymmetric distance and might have different metric properties, see \cite{MR3108873,MR3210893} and \cite{MR2977440}.

On $\hh^1$ we always consider the standard basis of left-invariant vector fields 
\[
X=\frac{\ptl}{\ptl x}+y\,\frac{\ptl}{\ptl t}, \quad Y=\frac{\ptl}{\ptl y}-x\,\frac{\ptl}{\ptl t} ,\quad T=\frac{\ptl}{\ptl t},
\]
and the left-invariant Riemannian metric $g$, also denoted by $\escpr{\cdot,\cdot}$, making $X,Y,T$ orthonormal. The associated Riemannian measure is the Haar measure of the group, and coincides with the Lebesgue measure of the Euclidean space $\rr^3$. The measure of a set $E$ is the volume of the set and is denoted by $|E|$. The volume element is denoted by $d\hh^1$. 

Given a left-invariant norm $\norm{\cdot}_K$, a measurable set $E\subset\hh^1$ and an open set $\Om\subset\hh^1$, we define the sub-Finsler perimeter of $E$ in $\Om$ by
\[
|\ptl E|_K(\Om)=\sup\bigg\{\int_E\divv U\,d\hh^1: U\in\mathcal{H}_0^1(\Om), \norm{U}_{K,\infty}\le 1\bigg\},
\]
where $\mathcal{H}_0^1(\Om)$ is the set of $C^ 1$ horizontal vector fields with compact support in $\Om$ and $\norm{\cdot}_{K,\infty}$ is the infinity norm associated to $\norm{\cdot}_K$. The perimeter associated to the Euclidean norm $|\cdot|$ is the sub-Riemannian perimeter as it is defined in \cite{MR1404326,MR1871966,MR1437714}. A set has finite perimeter for a given norm if and only if it has finite perimeter for the standard sub-Riemannian perimeter. Hence all known results in the standard case apply to the sub-Finsler perimeter.

In case the boundary $S$ of $E$ is a $C^1$ or Euclidean lipschitz surface, the perimeter of $E$ is given by the sub-Finsler area functional
\begin{equation}
\label{eq:K-area}
\tag{*}
A_K(S)=\int_S \norm{N_h}_{K,*}\,dS,
\end{equation}
where $\norm{\cdot}_{K,*}$ is the dual norm of $\norm{\cdot}_K$, $N_h$ is the orthogonal projection to the horizontal distribution of the Riemannian unit normal $N$, and  $dS$ is the Riemannian measure on $S$.

If we consider a convex set $K$ with boundary of class $C^2_+$ (i.e., so that $\ptl K$ is of class $C^2$ and $\ptl K$ has positive geodesic curvature everywhere), we may compute the first variation of the area functional associated to a vector field $U$ with compact support in the regular part of $S$ to get
\[
A'_K(0)=\int_S u\,\big(\divv_S\eta_K\big)\,dS.
\]
In this formula $u=\escpr{U,N}$ is the normal component of the variation and $\divv_S\eta_K$ is the divergence on $S$ of the vector field $\eta_K=\pi_K(\nu_h)$, where $\nu_h=N_h/|N_h|$ is the horizontal unit normal and $\pi_K$ is the map projecting any vector $v\neq 0$ to the intersection of the supporting line in the direction of $v$ with $\norm{\cdot}_K=1$ (the boundary of $K$). The strict convexity of $\norm{\cdot}_K$ implies that this map is well-defined. 

The function $H_K=\divv_S\eta_K$ appearing in the first variation of perimeter is called the \emph{mean curvature} of $S$. Further calculations imply that $H_K$ is equal to $\escpr{D_Z\eta_K,Z}$, where $Z=-J(\nu_h)$ is the horizontal direction on the regular part of $S$. Hence the mean curvature function is localized on the horizontal curves of $S$. It is not difficult to check that a horizontal curve in a surface with mean curvature $H_K$ must satisfy a differential equation depending on $H_K$. Hence we can reconstruct the regular part of a surface with prescribed mean curvature by taking solutions of this differential equation. Furthermore, we might be able classify surfaces with prescribed mean curvature by classifying solutions of this ordinary differential equation and by looking at the interaction of these curves with the singular set $S_0$ of $S$ composed of the points where the tangent plane is horizontal, as was done in \cite{MR2435652} for the standard sub-Riemannian perimeter.

Key observations are that horizontal straight lines are solutions of the differential equation for $H_K=0$ and that horizontal liftings of the curve $\norm{\cdot}_K=1$ are solutions for $H_K=1$. The strict convexity of $\norm{\cdot}_K=1$ together with the invariance of the equation by left-translations and dilations imply that all solutions are of this type. 

Hence, given a convex body $K\subset\rr^2$ containing $0$ in its interior and its associated left--invariant norm $\norm{\cdot}_K$, we consider the set $\bb_K$ obtained as the ball enclosed by the horizontal liftings of all translations of the curve $\ptl K$ containing $0$. It is not difficult to prove that this way we obtain a topological sphere $\sph_K$ with two poles on the same vertical line, that is the union of two graphs. Moreover the boundary of $\bb_K$ is $C^2$ outside the poles (indeed $C^\ell$ if the boundary of $K$ is of class $C^\ell$, $\ell\ge 2$) and of regularity $C^2$ around the poles. When $K=D$, these sets were build by P.  Pansu \cite{MR829003} and are frequently referred to as Pansu spheres. We remark that Pansu spheres' $\bb_D$ are of class $C^2$ but not $C^3$ near the singular points, see Proposition~3.15 in \cite{MR2386783} and  Example~3.3 in \cite{MR2435652}.

\begin{figure}[h]
\includegraphics[width=0.55\textwidth]{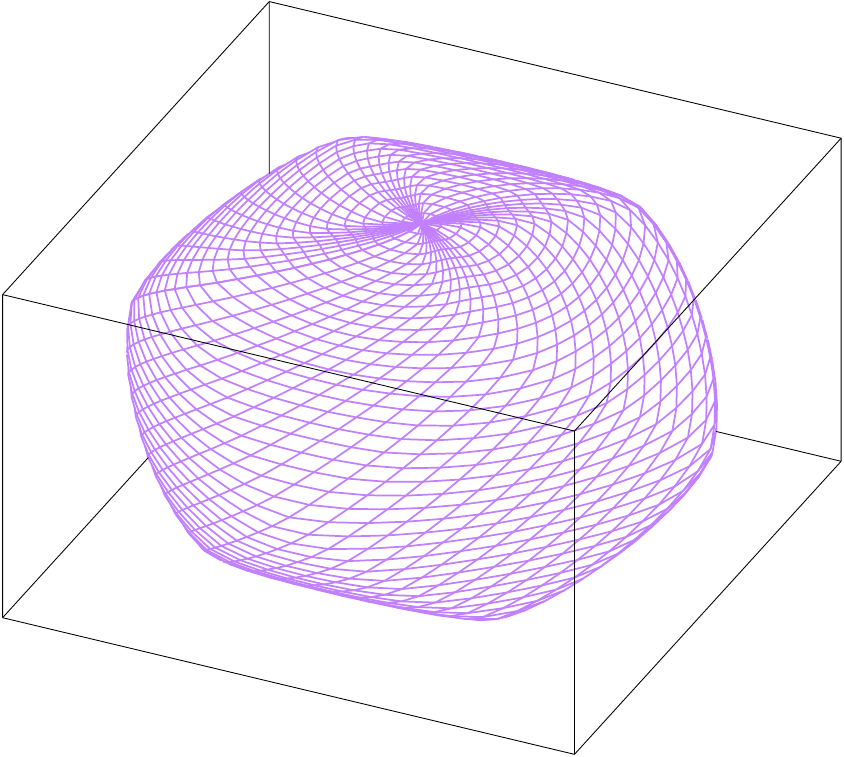}
\caption{The set $\bb_K$ when $K$ is the unit ball of the $r$-norm $\norm{(x,y)}_r=\big(|x|^r+|y|^r\big)^{1/r}$, $r=1.5$}
\end{figure}

\begin{figure}[h]
\includegraphics[width=0.55\textwidth]{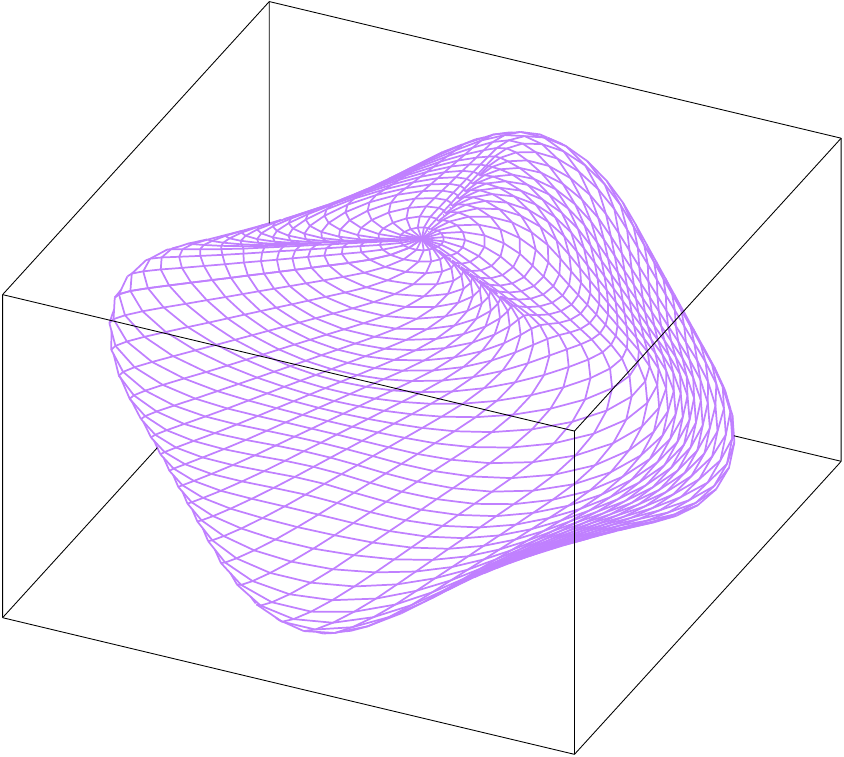}
\caption{The set $\bb_K$ when $K$ is a smooth approximation of the triangular norm}
\end{figure}

We observe that these objects have constant mean curvature. Hence they are critical points of the sub-Finsler area functional under a volume constraint. Further evidence that they have stronger minimization properties is given in Section~\ref{sec:calibration}, where it is proven that, under a geometric condition, a set of finite perimeter $E$ with volume equal to the volume of $\bb_K$ has perimeter larger than or equal to the one of the ball $\bb_K$.
 A slightly weaker result for the Euclidean norm was proven in \cite{MR2898770}.

We have organized this paper into several sections. In the next one we fix notation and  give some background, focusing specially in properties of the sub-Finsler perimeter. In section~\ref{sec:1stvar} we compute the first variation of perimeter for surfaces of class $C^2$ and prove the property that the regular part of the surface is foliated by horizontal liftings of  translations of homothetic expansions of $\ptl K$. In section~\ref{sec:examples} we define the Pansu-Wulff shapes and compute some examples and prove regularity properties of these objects. In Section~\ref{sec:geomprop} we study some geometric properties of the Pansu-Wulff shapes and, finally, in Section~\ref{sec:calibration} we obtain a minimization property of these Pansu-Wulff shapes. This property indicates that these shapes are good candidates to be solutions of the sub-Finsler isoperimetric problem in $\hh^1$.

Some justification on the terminology \emph{Pansu-Wulff shape} must be given. Consider a norm $\norm{\cdot}$ in Euclidean space and its dual norm $\norm{\cdot}_*$. For a Lipschitz surface $S$, the integral
\[
\int_S\norm{N}_*dS,
\]
where $N$ is an a.e. unit normal to $S$, defines a functional that represents the Gibbs free energy, proportional to the area of the surface of contact and to the surface tension, of an anisotropic interface separating two fluids or gases. The contribution of each element of area depends on the orientation. An equilibrium state is obtained by minimizing the free energy  for a drop of given volume. This is an isoperimetric problem in mathematical terms.

The solutions of this problem were described by the crystallographer G. Wulff in 1895: they are translations and dilations of the set $\{x\in\rr: \norm{x}\le 1\}$, usually referred to as the \emph{Wulff shape} of the free energy. A first mathematical proof of this fact was given by Dinghas \cite{zbMATH03045073}. Other versions of Wulff's results were given by Busemann \cite{MR31762}, Taylor \cite{MR493671}, Fonseca \cite{MR1116536} and Fonseca and Müller  \cite{MR1130601}; see also Gardner \cite{MR1898210},  Burago and Zalgaller \cite{MR936419}, Van Schaftingen \cite{MR2245755}, and Figalli, Maggi and Pratelli \cite{MR2672283}.

The counterpart of the free energy in the Heisenberg group $\hh^1$ is given in formula \eqref{eq:K-area}. When $K=D$ we obtain the classical sub-Riemannian area. In his Ph. Thesis, Pansu exhibited in \cite{MR829003} an example of an area-stationary candidate, that coincides with the sub-Finsler  Wulff shape we obtain this paper, and conjectured that this set is a solution of the sub-Riemannian isoperimetric problem in $\hh^1$. While many partial results have been obtained in the direction of proving this conjecture, see \cite{MR2271950,MR2435652,MR2898770,MR2548252,MR3412408,MR2402213,MR1976833,MR2177813,MR2386783,MR3987854} and the monograph \cite{MR2312336}, it still remains open.  

The authors were informed recently of the paper \cite{monti-finsler}, where the same problem is considered.

They would like to thank César Rosales for interesting discussions and Enrico Le Donne for suggesting the terminology \textit{Pansu-Wulff shapes}.

\section{Preliminaries}

\subsection{The Heisenberg group $\hh^1$}

 The Lie group $(\rr^3,*)$, where $*$ is the product defined, for any pair of points $[z,t]$, $[z',t']\in\rr^3\equiv\mathbb{C}\times\rr$, by
\[
[z,t]*[z',t']:=[z+z',t+t'+\text{Im}(z\overline{z}')], \qquad (z=x+iy).
\]
is referred to as the \emph{first Heisenberg group} and denoted by $\hh^1$. For $p\in\hh^1$, the \emph{left translation} by $p$ is the diffeomorphism $L_p(q)=p*q$.  A basis of left invariant vector fields is given by
\begin{equation*}
X:=\frac{\ptl}{\ptl x}+y\,\frac{\ptl}{\ptl t}, \qquad
Y:=\frac{\ptl}{\ptl y}-x\,\frac{\ptl}{\ptl t}, \qquad
T:=\frac{\ptl}{\ptl t}.
\end{equation*}
The \emph{horizontal distribution} $\mathcal{H}$ in $\hh^1$ is the smooth planar distribution generated by $X$ and $Y$.  The \emph{horizontal projection} of a vector $U$ onto $\mathcal{H}$ will be denoted by $U_{h}$.  A vector field $U$ is called \emph{horizontal} if $U=U_h$. A \emph{horizontal curve} is a $C^1$ curve whose tangent vector lies
in the horizontal distribution.

We denote by $[U,V]$ the Lie bracket of two $C^1$ vector fields $U$, $V$ on $\hh^1$. Note that $[X,T]=[Y,T]=0$, while $[X,Y]=-2T$.  The last equality implies that $\mathcal{H}$ is a
bracket generating distribution. Moreover, by Frobenius Theorem we have that $\mathcal{H}$ is nonintegrable.  The vector fields $X$ and $Y$ generate the kernel of the (contact) $1$-form
$\omega:=-y\,dx+x\,dy+dt$.

We shall consider on $\hh^1$ the (left invariant) Riemannian metric $g=\escpr{\cdot\,,\cdot}$ so that $\{X,Y,T\}$ is an orthonormal basis at every point, and the associated Levi-Civit\'a connection $D$.  The modulus of a vector field $U$ with respect to this Riemannian metric will be denoted by $|U|$.  The following derivatives can be easily computed 
\begin{alignat}{2}
\notag D_{X}X&=0, \qquad \ \ \ \, D_{Y}Y=0, \qquad \,D_{T}T=0, \\
\label{eq:christoffel}
D_{X}Y&=-T, \qquad \, D_{X}T=Y, \qquad \, D_{Y}T=-X, \\
\notag D_{Y}X&=T, \qquad \ \ \,\,D_{T}X=Y, \qquad D_{T}Y=-X.
\end{alignat}

For any vector field $U$ on $\hh^1$ we define $J(U)=D_UT$.  Then we have $J(X)=Y$, $J(Y)=-X$ and $J(T)=0$, so that $J^2=-\text{Identity}$ when restricted to the horizontal distribution.  

We consider the first Heisenberg group $\hh^1$, and refer to \cite{MR2435652} for notation and background.

\subsection{The pseudo-hermitian connection}

The \emph{pseudo-hermitian} connection $\nabla$ on $\hh^1$ is the only affine connection satisfying the following properties:
\begin{enumerate}
	\item $\nabla$ is a metric connection,
	\item $\text{Tor}(U,V)=2\,\escpr{J(U),V}T$ for all vector fields $U,V$.
\end{enumerate}	
The existence of the pseudo-hermitian connection can be easily obtained adapting the proof of existence of the Levi-Civita connection, see Theorem 3.6 in \cite{MR1138207}. 


 We shall use the following relation between the pseudo-hermitian and the Levi-Civita connections.
 \begin{lemma}
 	Let $U$, $V$ and $W$ be vector fields where $V$ and $W$ are horizontal. Then the following equation holds
 	\begin{equation}\label{relcon}
 	\langle \nabla_U V, W\rangle=\langle D_U V, W\rangle+\langle J(W),V\rangle \langle T,U\rangle.
 	\end{equation}
In particular
\begin{equation}
\label{eq:relcon}
\nabla_UV=D_UV-\escpr{T,U}J(V).
\end{equation}
 \end{lemma}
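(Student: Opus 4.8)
The plan is to derive \eqref{relcon} directly from the two defining properties of the pseudo-hermitian connection $\nabla$, and then obtain \eqref{eq:relcon} as an immediate consequence. Write $A_UV := \nabla_U V - D_U V$ for the difference tensor of the two connections. Since $\nabla$ and $D$ are both metric connections, $A$ is skew-symmetric in the sense that $\escpr{A_U V, W} + \escpr{V, A_U W} = 0$ for all vector fields $U,V,W$. On the other hand, the torsion formula for $\nabla$ together with the fact that $D$ is torsion-free gives $A_U V - A_V U = \mathrm{Tor}_\nabla(U,V) = 2\escpr{J(U),V}\,T$.

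From these two algebraic identities I would recover $A$ by the usual Koszul-type cyclic manipulation: write $\escpr{A_U V, W}$, permute $(U,V,W)$ cyclically, and combine the three resulting equations using the skew-symmetry in the last two slots to cancel terms and solve for $\escpr{A_U V, W}$. This yields
\[
2\escpr{A_U V, W} = \escpr{\mathrm{Tor}_\nabla(U,V),W} + \escpr{\mathrm{Tor}_\nabla(W,U),V} + \escpr{\mathrm{Tor}_\nabla(W,V),U},
\]
and substituting $\mathrm{Tor}_\nabla(U,V) = 2\escpr{J(U),V}T$ together with the skew-symmetry $\escpr{J(X),Y} = -\escpr{X,J(Y)}$ collapses the right-hand side. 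When $V$ and $W$ are both horizontal, $\escpr{J(U),V}\escpr{T,W} = 0$ and $\escpr{J(W),U}\escpr{T,V} = 0$ vanish because $T \perp \mathcal{H}$, leaving only the term $\escpr{J(W),V}\escpr{T,U}$, which gives exactly \eqref{relcon}. To pass to \eqref{eq:relcon}, observe that for horizontal $V$ both $D_U V$ and $\nabla_U V$ need not be horizontal, so I would check the identity $\nabla_U V = D_U V - \escpr{T,U}J(V)$ by pairing both sides against each element of the orthonormal frame $\{X,Y,T\}$: against horizontal vectors it is \eqref{relcon}, and against $T$ one uses that $J(V)\perp T$ and that $\escpr{\nabla_U V, T}$ can be computed from metricity, $U\escpr{V,T} - \escpr{V,\nabla_U T}$, against $\escpr{D_U V,T}$ via the Christoffel relations \eqref{eq:christoffel} — these are arranged to match.

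The main obstacle, such as it is, is bookkeeping rather than conceptual: one must be careful that the sign conventions in the torsion formula (2) and in $J(U)=D_U T$ are consistent with the claimed sign in \eqref{relcon}, and that the cyclic-sum computation is carried out with the correct placement of the three torsion terms. A secondary subtlety is that $\nabla$ is being characterized among \emph{all} affine connections, not just those compatible with some extra structure, so the existence/uniqueness remark (which the excerpt defers to the adaptation of the Levi-Civita argument) is exactly this same Koszul computation read in reverse; I would simply note that the formula just derived \emph{defines} a connection with the required two properties, which also settles existence. I expect the verification of the $T$-component in \eqref{eq:relcon} to be the fiddliest single step, since it is the one place where the non-horizontal parts of $D_U V$ and $\nabla_U V$ interact with the Christoffel symbols \eqref{eq:christoffel}.
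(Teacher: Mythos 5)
Your derivation of \eqref{relcon} is correct and is essentially the paper's own argument: the authors invoke the Koszul formula for a metric connection with prescribed torsion, which is exactly your cyclic manipulation of the difference tensor $A_UV=\nabla_UV-D_UV$; substituting $\mathrm{Tor}(U,V)=2\escpr{J(U),V}T$ and using that $V,W\perp T$ leaves precisely the term $\escpr{J(W),V}\escpr{T,U}$. No objection to that part.

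The genuine gap is in the last step, where you assert that the $T$-components of the two sides of \eqref{eq:relcon} ``are arranged to match.'' They are not. Your own contorsion formula, applied with $V=T$, gives $A_UT=-J(U)$, hence $\nabla_UT=D_UT-J(U)=0$ and therefore $\escpr{\nabla_UV,T}=U\escpr{V,T}-\escpr{V,\nabla_UT}=0$ for horizontal $V$. On the other hand $\escpr{D_UV-\escpr{T,U}J(V),T}=\escpr{D_UV,T}=-\escpr{V,D_UT}=-\escpr{V,J(U)}=\escpr{J(V),U}$, which is generally nonzero. Concretely, take $U=X$, $V=Y$: the table \eqref{eq:christoffel} gives $D_XY=-T$, while $\nabla X=\nabla Y=0$ gives $\nabla_XY=0$, so the two sides of \eqref{eq:relcon} differ by $T$. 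The identity that actually follows from your computation is
\begin{equation*}
\nabla_UV=D_UV-\escpr{T,U}J(V)+\escpr{J(U),V}\,T,
\end{equation*}
i.e.\ \eqref{eq:relcon} holds only after projecting onto the horizontal distribution. To be fair, the paper's own ``In particular'' commits the same leap, since \eqref{relcon} only controls horizontal test vectors $W$; and the error is harmless downstream because every subsequent use of \eqref{eq:relcon} and \eqref{eq:relcovder} pairs the identity against a horizontal vector (e.g.\ against $\pi(V)$ in Lemma~\ref{dernor}), where the extra $T$-term drops out. But as a proof of the stated vector identity your final step fails, and carrying out the verification you deferred would have exposed the missing term rather than confirmed the formula.
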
	
\begin{proof}
By Koszul formula, see \S~3 in \cite{MR1138207}.
The terms in the first two lines are equal to $\escpr{D_UV,W}$. The last three terms can be computed using the expression for the torsion to get
\begin{equation*}
\escpr{ J(W),V}\escpr{T,U}.
\end{equation*}
This proves \eqref{relcon}. 
\end{proof}

Using Koszul formula it can be easily seen that $\nabla X=\nabla Y=0$.
 
\begin{corollary}
\label{cor:conrel}
Let $\ga:I\to S$ be a curve on $\hh^1$ and let $\nabla/ds$, $D/ds$ be the covariant derivatives induced by the pseudo-hermitian connection and the Levi-Civita connection in $\ga$, respectively. Let $V$ be a vector field along $\ga$. Then we have
\begin{equation}
\label{eq:relcovder}
\frac{\nabla}{ds}V=\frac{D}{ds}V-\escpr{\dot{\ga},T}J(V).
\end{equation}
In particular, if $\ga$ is a horizontal curve, the covariant derivatives coincide.
\end{corollary}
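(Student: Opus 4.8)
The statement to prove is Corollary~\ref{cor:conrel}, which transfers the connection relation \eqref{eq:relcon} to covariant derivatives along a curve.

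The plan is to reduce the corollary to the pointwise identity \eqref{eq:relcon} already established in the lemma, using the standard relationship between an affine connection and the covariant derivative it induces along a curve. First I would recall that for any affine connection $\nabla$ on $\hh^1$ and any curve $\ga:I\to\hh^1$, the induced covariant derivative $\nabla/ds$ of a vector field $V$ along $\ga$ is characterized by $\mathbb{R}$-linearity, the Leibniz rule $\frac{\nabla}{ds}(fV)=\dot f\,V+f\,\frac{\nabla}{ds}V$, and the compatibility condition that if $V$ is the restriction to $\ga$ of a vector field $\widetilde V$ defined on a neighborhood, then $\frac{\nabla}{ds}V=\nabla_{\dot\ga}\widetilde V$. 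The same three properties characterize $D/ds$ with respect to the Levi-Civita connection $D$.

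Next I would write $V$ locally in the frame $\{X,Y,T\}$ as $V=\sum_i f_i E_i$ with $E_1=X$, $E_2=Y$, $E_3=T$ and $f_i\in C^\infty(I)$. Applying the Leibniz rule for $\nabla/ds$ gives $\frac{\nabla}{ds}V=\sum_i \dot f_i E_i + \sum_i f_i\,\frac{\nabla}{ds}E_i$, and since each $E_i$ extends to a global left-invariant field we have $\frac{\nabla}{ds}E_i=\nabla_{\dot\ga}E_i$; similarly $\frac{D}{ds}V=\sum_i\dot f_i E_i+\sum_i f_i\,D_{\dot\ga}E_i$. Subtracting, the $\dot f_i$ terms cancel and one is left with $\frac{\nabla}{ds}V-\frac{D}{ds}V=\sum_i f_i\big(\nabla_{\dot\ga}E_i-D_{\dot\ga}E_i\big)$. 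Now invoke \eqref{eq:relcon} with $U=\dot\ga$ and $V=E_i$: this needs $E_i$ horizontal, which holds for $E_1,E_2$, giving $\nabla_{\dot\ga}E_i-D_{\dot\ga}E_i=-\escpr{T,\dot\ga}J(E_i)$; for $E_3=T$ one checks directly from the defining properties (or from the torsion formula together with $\nabla$ being metric, exactly as in the lemma's proof, noting $J(T)=0$) that $\nabla_{\dot\ga}T=D_{\dot\ga}T$, so the same formula $-\escpr{T,\dot\ga}J(E_i)$ holds trivially since $J(T)=0$. Hence $\frac{\nabla}{ds}V-\frac{D}{ds}V=-\escpr{\dot\ga,T}\sum_i f_i J(E_i)=-\escpr{\dot\ga,T}J(V)$ by linearity of $J$, which is \eqref{eq:relcovder}. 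Finally, if $\ga$ is horizontal then $\escpr{\dot\ga,T}=0$ by definition of horizontal curve, so the two covariant derivatives coincide.

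The only mild subtlety, and the step I would be most careful with, is the justification that $\frac{\nabla}{ds}E_i=\nabla_{\dot\ga}E_i$ even at points where $\dot\ga$ vanishes or where $\ga$ is not an embedding: this is exactly the standard fact that the covariant derivative along a curve is well-defined and agrees with the ambient connection when the field along the curve extends, and it holds for any affine connection without modification. Everything else is a direct frame computation plus an appeal to \eqref{eq:relcon}, so there is no real obstacle; the corollary is essentially a formal consequence of the lemma.
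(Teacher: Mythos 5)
Your reduction to the frame $\{X,Y,T\}$ via the Leibniz rule is the right mechanism, and for the horizontal frame fields $X,Y$ the appeal to \eqref{eq:relcon} with $U=\dot\ga$ is exactly what the paper intends (the corollary is stated without proof as an immediate consequence of the Lemma). The gap is in your treatment of $E_3=T$: the claim that $\nabla_{\dot\ga}T=D_{\dot\ga}T$ is false. By definition $D_UT=J(U)$, which is nonzero whenever $U$ has a nonvanishing horizontal part, whereas $\nabla_UT=0$: since $\nabla$ is metric and $\nabla X=\nabla Y=0$, one gets $\escpr{\nabla_UT,X}=U\escpr{T,X}-\escpr{T,\nabla_UX}=0$, and likewise for $Y$ and $T$. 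The vanishing of $\text{Tor}(U,T)=2\escpr{J(U),T}\,T=0$ does not force the two connections to agree on $T$, because the contorsion $\escpr{\nabla_UT-D_UT,W}$ also involves $\escpr{\text{Tor}(W,U),T}=2\escpr{J(W),U}$, which does not vanish; one finds $\nabla_{\dot\ga}T-D_{\dot\ga}T=-J(\dot\ga)$. Carried through your frame computation, this produces an extra term $-\escpr{V,T}J(\dot\ga)$ on the right-hand side of \eqref{eq:relcovder}, and that term is genuinely there: take $\dot\ga=X$ and $V=T$; then $\tfrac{\nabla}{ds}V=0$ while $\tfrac{D}{ds}V=J(X)=Y$ and $-\escpr{\dot\ga,T}J(T)=0$, so \eqref{eq:relcovder} fails for this $V$, as does the final assertion about horizontal curves.

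The resolution is that Corollary~\ref{cor:conrel}, like the Lemma it follows from, is meant for \emph{horizontal} vector fields $V$ along $\ga$; this is how it is used throughout Section~\ref{sec:1stvar} (always with $V=\nu_h$ or $V=(N_s)_h$, and always paired against horizontal vectors). With that hypothesis your argument closes: write $V=f_1X+f_2Y$, the $\dot f_i$ terms cancel, and \eqref{eq:relcon} gives $\tfrac{\nabla}{ds}V-\tfrac{D}{ds}V=-\escpr{\dot\ga,T}J(V)$, at least in the horizontal components, which is all that is ever used (note that $\nabla$ preserves $\hhh$ while $D$ does not, e.g.\ $D_XY=-T$ but $\nabla_XY=0$, so even here the identity should properly be read against horizontal test vectors, as in \eqref{relcon}). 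You should therefore either add the hypothesis that $V$ is horizontal or correct the $T$-term; as written, the step for $E_3=T$ is wrong and the general statement you prove is false.
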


\subsection{Sub-Finsler norms}

The notion of norm we use in these notes is the one of asymmetric norm. This is a non-negative function $\norm{\cdot}:V\to\rr$ defined on a finite-dimensional real vector space $V$ satisfying
\begin{enumerate}
\item $\norm{v}=0$ if and and only if $v=0$,
\item $\norm{\la v}=\la\norm{v}$, for all $\la\ge 0$ and $v\in V$, and
\item $\norm{v+w}\le\norm{v}+\norm{w}$, for all $v,w\in V$.
\end{enumerate}
We stress the fact that we are not assuming the symmetry property $\norm{-v}=\norm{v}$.

Associated to a given a norm $||\cdot||$ in $V$ we have the set $F=\{u\in V:||u||\le 1\}$, which is compact, convex and includes $0$ in its interior. Reciprocally, given a compact convex set $K$ with $0\in\intt(K)$, the function $||u||_K=\inf\{\la\ge 0:  u\in\la K\}$ defines a norm in $V$ so that $F=\{u\in V:||u||_K\le 1\}$. The set $F$ is referred to as the closed unit ball (centered at $0$) of the norm $||\cdot||$. 

Given a norm $\norm{\cdot}$ and an scalar product $\escpr{\cdot,\cdot}$ in $V$, we consider its dual norm $\norm{\cdot}_*$ of $||\cdot||$ with respect to $\escpr{\cdot,\cdot}$ defined by
\[
\norm{u}_*=\sup_{\norm{v}\le 1}\escpr{u,v}.
\]


The dual norm is the support function $h$ of the unit ball $K=\{u\in V: \norm{u}\le 1\}$ with respect to the scalar product $\escpr{\cdot,\cdot}$. From this point on, we assume that $\norm{\cdot}$ is smooth (i.e., it is $C^\infty$ in $V\setminus\{0\}$) and strictly convex:
\[
\norm{\la u+(1-\la)v}<1,\quad\text{for all }\la\in (0,1), \text{when }  u\neq v, \norm{u}=\norm{v}=1.
\]

Given $u\in V$, the compactness of the unit ball of $\norm{\cdot}$ and the continuity of $\norm{\cdot}$ implies the existence of $u_0\in V$ satisfying equality $\norm{u}_*=\escpr{u,u_0}$. Moreover, it can be easily checked that $\norm{u_0}=1$. In general, a point $u_0$ satisfying this property is not unique, but uniqueness follows from the assumption that $\norm{\cdot}$ is strictly convex: this is proved by contradiction assuming the existence of another point $u_0'$ with $\norm{u_0'}\le 1$ satisfying $\norm{u}_*=\escpr{u,u_0'}$. Of course $u_0'$ must also satisfy $\norm{u_0'}=1$. Then all the points $v$ in the segment $[u_0,u_0']$ satisfy $\norm{v}\le 1$ and $\norm{u}_*=\escpr{u,v}$; hence $\norm{v}=1$. But this contradicts the strict convexity of $\norm{\cdot}$ unless $u_0=u_0'$. We shall define $\pi(u)$ as the only vector satisfying $\norm{\pi(u)}=1$ and
\[
h(u)=\norm{u}_*=\escpr{u,\pi(u)}.
\]
If $\la>0$ then it is easily checked that $\pi(\la u)=\pi(u)$.

We further assume that $K$ is of class $C^\ell_+$, with $\ell\ge 2$. This means that $\ptl K$ is of class $C^\ell$, $\ell\ge 2$, and that the geodesic curvature of $\ptl K$ is everywhere positive. Hence the Gauss map $N:\ptl K\to\sph^1$ to the unit circle is a diffeomorphism of class $C^{\ell-1}$. Since $\pi=N^{-1}$ we conclude that $\pi$ is of class $C^{\ell-1}$.  Moreover, by Corollary~1.7.3 in \cite{MR3155183} we have
\begin{equation*}
\nabla h(u)=N^{-1}\bigg(\frac{u}{|u|}\bigg),
\end{equation*}
and so $h$ is of class $C^\ell$.

Given a norm $\norm{\cdot}_0$ in $\hhh_0$, we extend it by left-invariance to a norm $\norm{\cdot}$ in the whole horizontal distribution $\hhh$ by means of the formula
\begin{equation}
\norm{v}_p=\norm{d\ell_p^{-1}(v)}_0, \qquad  p\in\hh^1, v\in\hhh_p.
\end{equation}
In particular, for a horizontal vector field $fX+gY$, its norm at a point $p\in\hh^1$ is given by $\norm{f(p)X_0+g(p)Y_0}_0$.
Identifying the vector $aX_0+bY_0\in \hhh_0$ with the Euclidean vector $(a,b)$, we can define a norm in $\rr^2$ by the formula $\norm{(a,b)}_e=\norm{aX_0+bY_0}_0$.

We consider the norm $(\norm{\cdot}_{0})_*$, dual to $\norm{\cdot}_{0}$ in $\hhh_0$, and we extend it by left-invariance to a norm $\norm{\cdot}_*$ in $\hhh$. It can be easily checked that $(\norm{\cdot}_*)_p$ is the dual norm to $\norm{\cdot}_p$ since
\begin{align*}
(\norm{v}_*)_p=(\norm{d\ell_p^{-1}(v)}_0)_{*}&=\sup_{\norm{w}_0\le 1, w\in\hhh_0}\escpr{d\ell_p^{-1}(v),w}
\\
&=\sup_{\norm{w'}_p\le 1, w'\in\hhh_p}\escpr{v,w'}
\\
&=(\norm{v}_p)_*.
\end{align*}


When $\norm{\cdot}_0$ is $C^l_+$ with $l\geq2$, all norms $\norm{\cdot}_p$ are $C^l_+$. Given a horizontal vector field $U$ of class $C^1$, we define $\pi(U)$ as the $C^1$ horizontal vector field satisfying
\begin{equation}
\label{eq:pi*}
\norm{U}_*=\escpr{U,\pi(U)},
\end{equation}
or, equivalently, $(\norm{U_p}_p)_*=\escpr{U_p,\pi(U)_p}$ for all $p\in\hh^1$.  We recall that $\pi(f U)=\pi(U)$ for any positive smooth function $f$.

\subsection{sub-Finsler perimeter} 
Let $E\subset\hh^1$ be a measurable set, $\norm{\cdot}_K$ the left-invariant norm associated to a convex body $K\subset\rr^2$ so that $0\in\intt(K)$, and $\Om\subset\hh^1$ an open subset. We say that $E$ has locally finite $K$-perimeter in $\Om$ if for any relatively compact open set $V\subset\Om$ we have
\[
|\ptl E|_K(V)=\sup\bigg\{\int_E\divv(U)\,d\hh^1: U\in\hhh_0^1(V), \norm{U}_{K,\infty}\le 1\bigg\}<+\infty.
\]
In this expression, $\hhh_0^1(V)$ is the space of horizontal vector fields of class $C^1$ with compact support in $V$, and $\norm{U}_{K,\infty}=\sup_{p\in V} \norm{U_p}_K$. The integral is computed with respect to the Riemannian measure $d\hh^1$ of this left-invariant metric.

Let $K,K'$ bounded convex bodies containing $0$ in its interior. Then there exist constants $\alpha,\beta>0$ such that
\[
\alpha \norm{x}_{K'}\le \norm{x}_K\le \beta \norm{x}_{K'},\quad \text{for all }x\in\rr^2.
\]
Let $E\subset\hh^1$ be a measurable set, $\Om\subset\hh^1$ an open set and $V\subset\Om$ a relatively open set. Take $U\in\hhh_0^1(V)$ a vector field with $\norm{U}_{K,\infty}\le 1$. Hence $\norm{\alpha U}_{K'}\le \norm{U}_{K}\le 1$ and
\begin{equation*}
\int_E \divv(U)d\hh^1=\frac{1}{\alpha}\int_E \divv(\alpha U)\,d\hh^1\le \frac{1}{\alpha} |\ptl E|_{K'}(V),
\end{equation*}
Taking supremum over the set of $C^1$ horizontal vector fields with compact support in $V$ and $\norm{\cdot}_K\le 1$, we get $|\ptl E|_K(V)\le \tfrac{1}{\alpha}|\ptl E|_{K'}(V)$. In a similar way we get the inequality $\tfrac{1}{\beta}|\ptl E|_{K'}(V)\le |\ptl E|_K(V)$, so that we have
\begin{equation}
\label{eq:abscont}
\tfrac{1}{\beta} |\ptl E|_{K'}(V)\leq |\ptl E_K|(V)\leq \tfrac{1}{\alpha} |\ptl E|_{K'}(V).
\end{equation}
As a consequence,  $E$ has locally finite $K$-perimeter if and only if it has locally finite $K'$-perimeter. 

Let $E\subset\hh^1$ be a set with locally finite $K$-perimeter in $\Om$. Given the standard basis $X,Y$ of the horizontal distribution, we can define a linear functional $L:C_0^1(\Om,\rr^2)\to \rr$
 by
\[
L(g)=L((g_1,g_2))=\int_E \divv(g_1X+g_2Y)\,d\hh^1.
\]
For any relatively compact open set $V\subset\Om$ we have
\[
C(V):=\sup\{L(g): g\in C_0^1(V,\rr^2), \norm{g}_{K,\infty}\le 1\}<+\infty,
\]

We fix any compact subset $C\subset \Om$ and take a relatively compact open set $V$ such that $C\subset V\subset\Om$. For each $g\in C_0(\Om,\rr^2)$ with support in $K$ we can find a sequence of $C^1$ functions $(g_i)_{i\in\nn}$ with support in $V$ such that $g_i$ converges uniformly to $g$. Hence equality
\[
\overline{L}(g)=\lim_{i\to\infty} L(g_i)
\]
allows to extend $L$ to a linear functional $\overline{L}:C_0(\Om,\rr^2)\to\rr$ satisfying 
\[
\sup\{\overline{L}(g): g\in C_0(\Om,\rr^2), \text{supp}(g)\subset C, \norm{g}_{K,\infty}\le 1\}\leq C(V)<+\infty.
\]

The proof of the Riesz Representation Theorem, see \S~1.8 in \cite{MR3409135}, can be adapted to obtain the existence of a Radon measure $\mu_K$ on $\Om$ and a $\mu_K$-measurable horizontal vector field $\nu_K$ in $\Om$ so that $\nu_K=\nu_1X+\nu_2Y$, with $(\nu_1,\nu_2):\Om\to\rr^2$ a $\mu_K$-measurable function, satisfying
\[
\overline{L}(g)=\int_\Om \escpr{g_1X+g_2Y,\nu_K}\,d\mu_K.
\]
The measure $\mu_K$ is the total variation measure
\[
\mu_K(V)=\sup\{\bar{L}(g):g\in C_0(\Om,\rr^2), \text{supp}(g)\subset V, \norm{g}_{K,\infty}\le 1\}
\]
that coincides with $|\ptl E|_K(V)$ because $\overline{L}$ is a continuous extension of $L$. Henceforth we denote $\mu_K$ by $|\ptl E|_K$.

Let us check that
\begin{equation}
\label{eq:normKnuK=1}
\norm{(\nu_K)_p}_{K,*}=1 \text{ for }|\ptl E|_K\text{-}a.e.\ p.
\end{equation}
Here $\norm{\cdot}_{K,*}$ is the dual norm of $\norm{\cdot}_K$. To prove \eqref{eq:normKnuK=1} we take a relatively compact open set $V\subset\Om$ and $g\in C_0(\Om,\rr^2)$ with $\text{supp}(g)\subset V$ and $\norm{g}_{K,\infty}\le 1$. Since $\escpr{g_1X+g_2Y,\nu_K}\le \norm{\nu_K}_{K,*}$ we have
\[
\overline{L}(g)\le\int_V \norm{\nu_K}_{K,*}d|\partial E|_K.
\]
Taking supremum over such $g$ we have
\[
|\partial E|_K(V)\le\int_V\norm{\nu_K}_{K,*}d|\partial E|_K.
\]
On the other hand, we can take a sequence of functions $(h_i)=((h_1)_i,(h_2)_i)$ with support in $V$ such that $\norm{h_i}_K\le 1$ and $\escpr{(h_1)_iX+(h_2)_iY,\nu_K}$ converges to $\norm{\nu_K}_{K,*}$ $|\partial E|_K$-a.e. This is a consequence of Lusin's Theorem, see \S~1.2 in \cite{MR3409135}, and follows by approximating the measurable function $\pi_K(\nu_K)$ by continuous uniformly bounded functions. Then we would have
\[
\int_V\norm{\nu_K}_{K,*}d|\partial E|_K=\lim_{i\to\infty} \escpr{(h_1)_iX+(h_2)_iY,\nu_K}d|\partial E|_K\le |\partial E|_K(V).
\]
So we would have
\[
|\partial E|_K(V)=\int_V\norm{\nu_K}_{K,*}d|\partial E|_K
\]
and so $\norm{\nu_K}_{K,*}=1$ for $|\partial E|_K$-a.e.

Given two convex sets $K,K'\subset \rr^2$ containing $0$ in their interiors, we shall obtain the following representation formula for the sub-finsler perimeter measure $|\partial E|_K$ and the vector field $\nu_K$
\begin{equation}
\label{eq:rnfinal}
|\partial E|_K=\norm{\nu_{K'}}_{K,*}|\partial E|_{K'},\quad \nu_{K}=\frac{\nu_{K'}}{\norm{\nu_{K'}}_{K,*}}.
\end{equation}
From \eqref{eq:abscont}, there exist two positive constants $\la,\Lambda$ such that
\[
\la|\partial E|_K\le|\partial E|_{K'}\le \Lambda|\partial E|_K.
\]
This implies that each of the Radon measures $|\partial E|_K,|\partial E|_{K'}$ is absolutely continuous with respect to the other one. Hence both Radon-Nikodym derivatives exist. Take a relatively compact open set $V\subset\Om$ and $U\in\hhh_0^1(V)$. Then we have
\begin{equation}
\label{eq:rn-0}
\begin{split}
\int_V\escpr{U,\nu_{K'}}\,d|\partial E|_{K'}&=
\int_V\chi_E \divv(U)\,d\hh^1
\\
&=\int_V\escpr{U,\nu_{K}}\,d|\partial E|_{K}=\int_V\escpr{U,\frac{d|\partial E|_{K}}{d|\partial E|_{K'}}\,\nu_{K}}\,d|\partial E|_{K'}.
\end{split}
\end{equation}
By the uniqueness of $\nu_{K'}$ we have
\begin{equation}
\label{eq:rn-1}
\nu_{K'}=\frac{d|\partial E|_{K}}{d|\partial E|_{K'}}\,\nu_{K},\quad |\partial E|_{K'}\text{-}a.e.
\end{equation}
On the other hand, inserting $U\in\hhh_0^1(V)$ in \eqref{eq:rn-0} with $\norm{U}_K\le 1$ we get
\[
\int_V\escpr{U,\nu_K}d|\partial E|_K=\int_V\escpr{U,\nu_{K'}}\,d|\partial E|_{K'}\le \int_V\norm{\nu_{K'}}_{K,*}d|\partial E|_{K'}.
\]
Taking supremum over $U$ we obtain
\[
\int_V\frac{d|\partial E|_K}{d|\partial E|_{K'}}d|\partial E|_{K'}=|\partial E|_K(V)\le \int_V\norm{\nu_{K'}}_{K,*}d|\partial E|_{K'}
\]
and, since $V$ is arbitrary, we have
\begin{equation}
\label{eq:rn-2}
\frac{d|\partial E|_K}{d|\partial E|_{K'}}\le\norm{\nu_{K'}}_{K,*}\quad |\partial E|_{K}\text{-}a.e. 
\end{equation}

Substituting \eqref{eq:rn-1} into \eqref{eq:rn-2} we have
\[
\frac{d|\partial E|_K}{d|\partial E|_{K'}}\le\norm{\nu_{K'}}_{K,*} =\frac{d|\partial E|_{K}}{d|\partial E|_{K'}} \quad |\partial E|_K\text{-}a.e.
\]
Hence we have equality and so
\begin{equation}
\label{eq:rneq}
\frac{d|\partial E|_K}{d|\partial E|_{K'}}=\norm{\nu_{K'}}_{K,*}\quad |\partial E|_K \text{-}a.e.
\end{equation}
Hence we get from equation \eqref{eq:rnfinal} from \eqref{eq:rneq} and \eqref{eq:rn-1}.

In the case of a set $E$ with $C^1$ boundary $S=\ptl E$ it is not difficult to check that
\[
|\partial E|_K=\norm{N_h}_{K,*}dS,\quad \nu_K=\frac{N_h}{\norm{N_h}_{K,*}},
\]
where $N_h$ is the horizontal projection of the unit normal to $S$ and $dS$ is the Riemannian measure on $S$. Indeed, for the closed unit disk $D\subset\rr^2$ centered at $0$ we know that in the $C^1$ case $\nu_D=\nu_h$ and $|N_h|=\norm{N_h}_{D,*}$. Hence we have
\begin{equation}
\label{eq:muKmuD}
|\partial E|_K=\norm{\nu_h}_{K,*}d|\partial E|_D, \quad \nu_K=\frac{\nu_h}{\norm{\nu_h}_{K,*}}.
\end{equation}
Here $|\partial E|_D$ is the standard sub-Riemannian measure.

\begin{remark}
Some other notions of perimeter and area for higher codimensional submanifolds have been considered in \cite{MR2313532,MR2414951,MR4118581}.
\end{remark}

\subsection{Immersed surfaces in $\hh^1$}
We consider oriented surfaces of class $C^2$ immersed in $\hh^1$ and we shall choose a unit normal to $S$. In case $S$ is the boundary of a domain $\Om\subset\hh^1$, we always choose the \emph{outer} unit normal. The \emph{singular set} of $S$ is denoted by $S_0$ and it is composed of the points in $p\in S$ where the tangent space $T_pS$ coincides with the horizontal  distribution $\hhh_p$. The \emph{horizontal} unit normal $\nu_h$ is defined in $S\setminus S_0$ by
\[
\nu_h=\frac{N_h}{|N_h|}.
\]
The vector field $Z$ is defined by
\[
Z=-J(\nu_h).
\]
The vector field $Z$ is defined on $S\setminus S_0$ and it is tangent to $S$ and horizontal. It generates at every point $p\in S\setminus S_0$ the subspace $T_pS\cap\hhh_p$.


\section{First variation of sub-Finsler area} 	
\label{sec:1stvar}
In this section we fix a convex body $K\subset\rr^2$ containing $0$ in its interior with $C^2_+$ boundary and consider its associated left-invariant norm $\norm{\cdot}_K$ in $\hh^1$. Since the convex body is fixed, we drop the subscript along this section.

Let $S$ be an oriented $C^2$ surface immersed in $\hh^1$. Let $U$ be a $C^2$ vector field with compact support on $S$, normal component $u=\langle U,N\rangle$ and  associated one-parameter group of diffeomorphisms $\{\varphi_s\}_{s\in\rr}$. In this subsection we compute the first variation of the sub-Finsler area $A(s)=A(\varphi_s(S))$. More precisely
\begin{theorem}
\label{thm:1stvar}
Let $S$ be an oriented $C^2$ surface immersed in $\hh^1$. Let $U$ be a $C^2$ vector field with compact support on $S$, normal component $u=\langle U,N\rangle$ and $\{\varphi_s\}_{s\in\rr}$ the associated one-parameter group of diffeomorphisms. Let $\eta=\pi(\nu_h)$. Then we have 
\begin{equation}
\label{eq:A'(0)}
\begin{split}
\frac{d}{ds}\bigg|_{s=0} A(\varphi_s(S))=\int_S\big(u\divv_S \eta
-2u\escpr{N,T}\escpr{J(&N_h),\eta}\big)\,dS
	\\	&
-\int_S \divv_S\big(u\eta^\top\big)\,dS,
\end{split}
\end{equation}
where $\divv_S$ is the Riemannian divergence in $S$, and the superscript $\top$ indicates the tangent projection to $S$.
\end{theorem}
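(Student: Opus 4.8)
The plan is to differentiate the area functional $A(s)=\int_{\varphi_s(S)}\norm{N_h}_{*}\,dS$ by pulling everything back to the fixed surface $S$ via the diffeomorphisms $\varphi_s$. First I would write $A(s)=\int_S \norm{(N_s)_h}_{*}\,J_s\,dS$, where $N_s$ is the unit normal to $\varphi_s(S)$ transported back to $S$ and $J_s$ is the Jacobian of $\varphi_s|_S$ with respect to the Riemannian metric $g$. The two classical ingredients are the first variation of the Riemannian area element, $\frac{d}{ds}\big|_{s=0}(dS_s)=(\divv_S U)\,dS$ (which after splitting $U=U^\top+uN$ gives the usual $\divv_S U^\top$ together with the mean-curvature term $u\,\mathrm{II}$-trace), and the variation of the unit normal, $\frac{d}{ds}\big|_{s=0}N_s=-\nabla_S u - \mathrm{II}(U^\top,\cdot)^\sharp$ or the analogous Levi-Civita expression. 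I will need $\frac{\ptl}{\ptl s}\big|_{0}\norm{(N_s)_h}_* = \escpr{\pi(\nu_h),\,\frac{\ptl}{\ptl s}\big|_0 (N_s)_h}$, using exactly the duality identity $\norm{v}_*=\escpr{v,\pi(v)}$ and the homogeneity $\pi(\la v)=\pi(v)$ recorded in the Preliminaries, so that only the horizontal projection of $\dot N$ enters, paired against $\eta=\pi(\nu_h)$.

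The key computational step is then to combine these pieces. Writing $v:=\norm{N_h}_*$, one has $A'(0)=\int_S \big(\dot v + v\,\divv_S U\big)\,dS$ with $\dot v=\escpr{\eta,\dot N_h}$. I would expand $\dot N_h$ using the connection $D$: since $N_h=N-\escpr{N,T}T$, differentiating gives a term from $\dot N$ and a term from $\frac{d}{ds}(\escpr{N,T}T)$, and the latter is where the factor $\escpr{N,T}$ and the operator $J$ appear, because $D_XT=Y$, $D_YT=-X$ (the Christoffel relations \eqref{eq:christoffel}) convert $T$-derivatives into rotations by $J$. The anisotropic term $-2u\escpr{N,T}\escpr{J(N_h),\eta}$ should emerge precisely from collecting the contributions in which a derivative lands on $T$ — morally this is the curvature/torsion term of the sub-Riemannian structure seen through the norm, and the factor $2$ traces back to $[X,Y]=-2T$. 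Finally the purely tangential part of $U$ should reorganize, via the Riemannian divergence theorem on $S$ applied to the vector field $u\,\eta^\top$ (the tangential projection of $\eta$), into the last integral $-\int_S\divv_S(u\,\eta^\top)\,dS$; here one uses that $\divv_S(v\,U^\top)$ integrates against the variation of $dS$ in a way that telescopes, much as in the first variation of Riemannian perimeter for anisotropic functionals.

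I would organize the write-up as: (1) set up the pullback and recall the two variation formulas for $dS_s$ and $N_s$; (2) compute $\frac{d}{ds}\big|_0\norm{(N_s)_h}_*$ via duality, reducing to $\escpr{\eta,\dot N_h}$; (3) substitute the Levi-Civita expression for $\dot N$ and use \eqref{eq:christoffel} to extract the $\escpr{N,T}$-term; (4) collect the tangential contributions and apply the divergence theorem on $S$ to obtain the $\divv_S(u\,\eta^\top)$ term; (5) verify the remaining terms assemble into $u\,\divv_S\eta - 2u\escpr{N,T}\escpr{J(N_h),\eta}$.

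The main obstacle I anticipate is the bookkeeping in step (3): correctly tracking how the $t$-component of $N$ varies and how derivatives of $T$ introduce $J$, while simultaneously keeping $\eta=\pi(\nu_h)$ (not $\nu_h$ itself) as the vector one pairs against — one must be careful that $\pi$ is only $0$-homogeneous, so $\escpr{\eta,\dot N_h}$ is the right object but $\escpr{\eta,\dot\nu_h}$ is not, and the normalization $|N_h|$ must be handled so that its derivative contributes nothing (since $\escpr{\pi(\nu_h),\nu_h}=1$ forces $\escpr{\pi(\nu_h),\dot\nu_h}\cdot$ to combine correctly). A secondary subtlety is that the formula is claimed on all of $S$, so one should note the singular set $S_0$ has measure zero and the integrands extend continuously (or the vector field $U$ may be taken with support away from $S_0$ as a first step and then one argues by density/continuity), so that $\eta^\top$, $\divv_S\eta$ and the rest are well defined $dS$-a.e.
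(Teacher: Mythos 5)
Your overall architecture coincides with the paper's: differentiate $A(s)=\int_S\norm{(N_s)_h}_*\,dS_s$ to get $\int_S\big(\tfrac{d}{ds}\big|_0\norm{(N_s)_h}_*+\norm{N_h}_*\divv_S U\big)\,dS$, differentiate the dual norm by duality, expand $\dot N$ via $-\nabla_S u-A_S(U^\top)$, and reorganize with the divergence theorem. However, there is a genuine gap in your step (2): the identity
\[
\frac{d}{ds}\Big|_{0}\norm{(N_s)_h}_*=\escpr{\eta,\tfrac{D}{ds}\big|_0 (N_s)_h}
\]
is false when the curve $s\mapsto\varphi_s(p)$ is not horizontal. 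The envelope argument behind it requires a supporting field $W(s)$ with $\norm{W(s)}=1$ for all $s$ and vanishing covariant derivative at $s=0$; the natural choice is the left-invariant extension $W=aX+bY$ of $\pi(\nu_h)(p)$, which has constant norm by left-invariance of $\norm{\cdot}$ but is \emph{not} $D$-parallel: $\tfrac{D}{ds}W=\escpr{\ga',T}J(W)$ because $D_TX=Y$, $D_TY=-X$. It is parallel only for the pseudo-hermitian connection ($\nabla X=\nabla Y=0$), and converting back to $D$ produces the extra term $\escpr{\ga',T}\escpr{V,J(\pi(V))}$ --- this is exactly the paper's Lemma~\ref{dernor}. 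Evaluated on $V=(N_s)_h$ with $\ga'=U$, this correction contributes $u\escpr{N,T}\escpr{N_h,J(\eta)}=-u\escpr{N,T}\escpr{J(N_h),\eta}$.

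Your step (3) correctly produces the \emph{other} copy of this quantity, from $\tfrac{D}{ds}\big(\escpr{N_s,T}T\big)$ via $\tfrac{D}{ds}T=J(U)$. But as written your plan captures only that one copy, so it would end with coefficient $1$ rather than $2$ in the anisotropic term. Your remark that the factor $2$ ``traces back to $[X,Y]=-2T$'' does not identify the actual mechanism: the second copy comes from the failure of left-invariant horizontal frames to be $D$-parallel inside the differentiation of the dual norm, not from the bracket. A smaller inaccuracy: the term $-\int_S\divv_S(u\eta^\top)\,dS$ is not obtained by applying the divergence theorem to $u\eta^\top$; it simply remains in the formula after writing $-\escpr{\nabla_Su,\eta}=u\divv_S\eta-\divv_S(u\eta^\top)-u\norm{N_h}_*\divv_S N$, and what the divergence theorem eliminates is $\int_S\divv_S(\norm{N_h}_*U^\top)\,dS$.
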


In the proof of Theorem~\ref{eq:A'(0)}  we shall make use of the following Lemma and its consequences.

%

\begin{lemma}
\label{dernor}
Let $\ga:I\to\hh^1$ be a $C^1$ curve, where $I\subset\rr$ is an open interval, and $V$ a horizontal vector field along $\ga$. We have 
\begin{equation}
\label{eq:DnormU}
\frac{d}{ds}\norm{V}_*=\escpr{\frac{D}{ds} V,\pi(V)}+\escpr{\ga',T_{\ga}}\escpr{V,J(\pi(V))}.
	\end{equation}
\end{lemma}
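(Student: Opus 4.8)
The plan is to reduce everything to a pairing against the pseudo-hermitian covariant derivative $\nabla/ds$ along $\gamma$, exploiting that $\nabla$ annihilates the left-invariant frame. We assume, as is implicit in the statement, that $V$ is of class $C^1$ and nowhere zero along $\gamma$, so that $\pi(V)$ is defined; since $\pi=N^{-1}$ is of class $C^{\ell-1}$ with $\ell\ge 2$, the field $\pi(V)$ is $C^1$ along $\gamma$, and by construction $\norm{V}_*=\escpr{V,\pi(V)}$ and $\norm{\pi(V)}=1$.

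The first step is the identity
\[
\frac{d}{ds}\norm{V}_* = \Big\langle \frac{\nabla}{ds}V,\ \pi(V)\Big\rangle .
\]
To see this, write $V=aX+bY$ along $\gamma$ with $a,b\in C^1(I)$, and recall that, as the frame $\{X,Y\}$ is $g$-orthonormal, the identification $v\mapsto(a,b)$ of $\hhh_{\gamma(s)}$ with $\rr^2$ is a linear isometry. By left-invariance of the norm, $(\norm{V}_*)_{\gamma(s)}=h(a(s),b(s))$, where $h$ is the support function of the unit ball of $\norm{\cdot}_e$ in $\rr^2$; hence the chain rule gives $\frac{d}{ds}\norm{V}_*=\nabla h(a,b)\cdot(a',b')$. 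Now $\nabla h(a,b)$ represents $\pi(V)$ in the frame coordinates, by the formula $\nabla h(u)=N^{-1}(u/|u|)=\pi(u)$ recalled earlier, while $\frac{\nabla}{ds}V=a'X+b'Y$ because the pseudo-hermitian connection satisfies $\nabla X=\nabla Y=0$. Since the identification is an isometry, $\nabla h(a,b)\cdot(a',b')=\escpr{\frac{\nabla}{ds}V,\pi(V)}$, which is the claimed identity.

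It remains to trade $\nabla/ds$ for $D/ds$. By Corollary~\ref{cor:conrel}, and since $V$ is horizontal along $\gamma$, we have $\frac{\nabla}{ds}V=\frac{D}{ds}V-\escpr{\gamma',T}J(V)$, so that
\[
\frac{d}{ds}\norm{V}_* = \Big\langle \frac{D}{ds}V,\ \pi(V)\Big\rangle - \escpr{\gamma',T}\,\escpr{J(V),\pi(V)} .
\]
Finally, $J$ is skew-adjoint on the horizontal distribution (immediate from $J(X)=Y$, $J(Y)=-X$), so $\escpr{J(V),\pi(V)}=-\escpr{V,J(\pi(V))}$, and \eqref{eq:DnormU} follows after substitution.

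The one point requiring care is the first displayed identity: one has to notice that differentiating the components of $V$ in the \emph{moving} frame $\{X,Y\}$ yields the pseudo-hermitian covariant derivative and not the Levi-Civita one --- precisely because $\nabla X=\nabla Y=0$ --- and that the isometric identification lets one transfer $\nabla h$ into a $g$-inner product with $\pi(V)$. Should one prefer to avoid $\nabla$, the same conclusion can be reached by direct computation: expand $\frac{D}{ds}V=a'X+b'Y+a\,D_{\gamma'}X+b\,D_{\gamma'}Y$, substitute $\gamma'=\alpha X+\beta Y+\tau T$ with $\tau=\escpr{\gamma',T}$ and use the Christoffel relations \eqref{eq:christoffel} to obtain the horizontal part $(a'-b\tau)X+(b'+a\tau)Y$, pair it with $\pi(V)=pX+qY$ (where $(p,q)=\nabla h(a,b)$), and verify directly that $pa'+qb'=\frac{d}{ds}\norm{V}_*$ while the remaining term equals $\tau(bp-aq)=\escpr{\gamma',T}\escpr{V,J(\pi(V))}$.
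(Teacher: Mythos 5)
Your proof is correct, and it reaches the key identity $\tfrac{d}{ds}\norm{V}_*=\escpr{\tfrac{\nabla}{ds}V,\pi(V)}$ by a genuinely different route than the paper. The paper fixes $s_0$, freezes $\pi(V(s_0))=aX+bY$ as a left-invariant field $W$ along $\ga$, observes that $s\mapsto\norm{V(s)}_*-\escpr{V(s),W(s)}$ is nonnegative and vanishes at $s_0$, and concludes that the derivatives agree there since $\tfrac{\nabla}{ds}W=0$; this is an envelope-type argument that never differentiates $\pi$. You instead invoke Schneider's gradient formula $\nabla h(u)=N^{-1}(u/|u|)=\pi(u)$ together with the chain rule applied to $h(a(s),b(s))$, using that $\nabla X=\nabla Y=0$ makes frame-coordinate differentiation compute $\nabla/ds$ and that the frame identification is a linear isometry. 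The two are really dual faces of the same fact (differentiability of the support function at $u$ is equivalent to uniqueness of the maximizer, and its gradient is that maximizer), so your route leans on the $C^2_+$ hypothesis via the citation while the paper's touching argument re-derives the needed first-order information in situ; yours has the small advantage of establishing differentiability of $s\mapsto\norm{V(s)}_*$ as part of the computation rather than assuming it. The remaining steps --- passing from $\nabla/ds$ to $D/ds$ via \eqref{eq:relcovder} and using skew-adjointness of $J$ on $\hhh$ --- coincide with the paper's, and your closing coordinate verification is consistent (the term $\tau(qa-pb)$ inside $\escpr{\tfrac{D}{ds}V,\pi(V)}$ is exactly cancelled by $\escpr{\ga',T}\escpr{V,J(\pi(V))}=\tau(bp-aq)$). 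Your explicit standing assumption that $V$ is nowhere zero is indeed needed and is implicit in the statement, since $\pi(V)$ is undefined otherwise.
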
	

\begin{proof}
We fix $s_0\in I$ and let $p=\ga(s_0)$. Assume that $\pi(V(s_0))=aX_p+bY_p$, for some $a,b\in\rr$. Take  the vector field $W(s):=aX_{\ga(s)}+bY_{\ga(s)}$ along $\ga$. It coincides with $\pi(V(s_0))$ when $s=s_0$, and it is the restriction to $\ga$ of the left-invariant vector field $aX+bY$. In particular, $\norm{(aX+bY)_{\ga(s)}}_{\ga(s)}=1$ for all $s\in I$. Hence
\[
\norm{V(s)}_*\ge \escpr{V(s),(aX+bY)_{\ga(s)}}\quad\text{for all }s\in I,
\]
and, since equality holds in the above inequality when $s=s_0$, we have
\begin{align*}
\frac{d}{ds}\bigg|_{s=s_0} \norm{V(s)}_*&=\frac{d}{ds}\bigg|_{s=s_0}\escpr{V(s),(aX+bY)_{\ga(s)}}
\\
&=\escpr{\frac{\nabla}{ds}\bigg|_{s=s_0} V(s),\pi(V(s_0))}
\end{align*}
since
\[
\frac{\nabla}{ds}\bigg|_{s=s_0}(aX+bY)_{\ga(s)}=a\nabla_{\ga'(s_0)}X+b\nabla_{\ga'(s_0)} Y=0.
\]
The result follows from the relation between the covariant derivatives given in Equation \eqref{eq:relcovder}.
\end{proof}

\begin{remark}
In the proof of Lemma \ref{dernor} we have obtained the equality
\[
\frac{d}{ds} \norm{V}_*=\escpr{\frac{\nabla}{ds} V,\pi(V)}
\]
for a horizontal vector field $V$ along a curve $\ga$. Since $\nabla$ is a metric connection, we also have
\[
\frac{d}{ds} \norm{V}_*=\escpr{\frac{\nabla}{ds} V,\pi(V)}+
\escpr{V,\frac{\nabla}{ds} \pi(V)}.
\]
Hence we get
\begin{equation}
\label{eq:keyeq}
\langle  V,\frac{\nabla}{ds}\pi(V)\rangle=0
\end{equation}
for a horizontal vector field $V$ along $\ga$, where $\nabla/ds$ is the covariant derivative induced by the pseudo-hermitian connection on $\ga$.
Taking into account the relation between the Levi-Civita and pseudo-hermitian connections we deduce from \eqref{eq:keyeq} and \eqref{eq:relcovder}
\begin{equation}
\label{eq:keyeq2}
\escpr{V,\frac{D}{ds}\pi(V)-\escpr{\dot{\ga},T_\ga} J(\pi(V))}=0.
\end{equation}
\end{remark}

The following is an easy consequence of Lemma~\ref{dernor}


%

\begin{corollary}
Let $F$ be a vector field tangent to $S$ and $\ga$ an integral curve of $F$. We have 
\begin{equation}
\label{eq:corconnection}
\escpr{\frac{D}{ds}\eta_{\ga},\nu_h}=-\escpr{ F,T}\escpr{\eta,J(\nu_h)}.
\end{equation}
In particular, if $F$ is horizontal,
\begin{equation}
\label{eq:corconnection-2}
\escpr{\frac{D}{ds}\eta_{\ga},\nu_h}=0.
\end{equation}
\end{corollary}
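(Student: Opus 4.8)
The plan is to deduce the corollary directly from identity \eqref{eq:keyeq2}, which was established in the Remark following Lemma~\ref{dernor} and which holds for an arbitrary horizontal vector field $V$ along a $C^1$ curve. The only input specific to the present statement is the choice $V=\nu_h$ together with the elementary observation that, since $\gamma$ is an integral curve of $F$, one has $\dot\gamma=F_\gamma$ along $\gamma$, hence $\escpr{\dot\gamma,T}=\escpr{F,T}$. Implicitly $\gamma$ is taken to lie in $S\setminus S_0$, where $\nu_h$, and therefore $\eta=\pi(\nu_h)$, are defined and of class $C^1$, so that the covariant derivative $\tfrac{D}{ds}\eta_\gamma$ makes sense.

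First I would substitute $V=\nu_h$ into \eqref{eq:keyeq2}; since $\pi(\nu_h)=\eta$ this gives
\[
\escpr{\nu_h,\tfrac{D}{ds}\eta_\gamma-\escpr{F,T}\,J(\eta)}=0,
\]
that is, $\escpr{\tfrac{D}{ds}\eta_\gamma,\nu_h}=\escpr{F,T}\,\escpr{\nu_h,J(\eta)}$. Then I would use that $J$ is skew-adjoint on the horizontal distribution — immediate from $J(X)=Y$, $J(Y)=-X$, or from $J^2=-\mathrm{Id}$ on $\hhh$ together with $J$ being a Riemannian isometry there — to rewrite $\escpr{\nu_h,J(\eta)}=-\escpr{J(\nu_h),\eta}=-\escpr{\eta,J(\nu_h)}$. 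Plugging this in yields exactly \eqref{eq:corconnection}. For the last assertion, if $F$ is horizontal then $\escpr{F,T}=0$, and \eqref{eq:corconnection-2} follows at once.

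There is no genuine obstacle here: the statement is essentially a bookkeeping corollary of Lemma~\ref{dernor}, routed through the key identity \eqref{eq:keyeq2}. The one point that warrants a little care is the sign produced by the skew-symmetry of $J$ and consistency with the convention $Z=-J(\nu_h)$, so that \eqref{eq:corconnection} may equivalently be recorded as $\escpr{\tfrac{D}{ds}\eta_\gamma,\nu_h}=\escpr{F,T}\escpr{\eta,Z}$; one should also make sure the integral curve $\gamma$ stays inside the regular set $S\setminus S_0$.
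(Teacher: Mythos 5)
Your proposal is correct and follows exactly the paper's own argument: the authors likewise obtain \eqref{eq:corconnection} by setting $V=\nu_h$ in \eqref{eq:keyeq2}, with the skew-adjointness of $J$ on the horizontal distribution supplying the sign. Your additional remarks on $\dot\gamma=F_\gamma$ and on staying in $S\setminus S_0$ only make explicit what the paper leaves implicit.
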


\begin{proof}
We take $V=\nu_{h}$ and we get \eqref{eq:corconnection} from equation \eqref{eq:keyeq2}.
\end{proof}
%
%
%
%

\begin{proof}[Proof of Theorem~\ref{thm:1stvar}]
Standard variational arguments, see the proof of Lemma~4.3 in \cite{MR2435652}, yield
\begin{align*}
A'(0)&=\frac{d}{ds}\bigg|_{s=0}A(\varphi_s(S))
=\int_S\bigg(\frac{d}{ds}\bigg|_{s=0}||(N_s)_h||_*+||N_h||_*\divv_S U\bigg)\,dS,
\end{align*}
where $N_s$ is a smooth choice of unit normal to $\varphi_s(S)$ for small $s$. 
We fix a point $p\in S$ and consider the curve $\ga(s)=\varphi_s(p)$. Lemma~\ref{dernor} now implies 
\[
\frac{d}{ds}\bigg|_{s=0} ||(N_s)_h||_*=\escpr{\frac{D}{ds}\bigg|_{s=0}(N_s)_h,\eta_p}+\escpr{U_p,T_p}\escpr{(N_h)_p,J(\eta_p)},
\]
By the definition of $(N_s)_h$ we also have
\[
\frac{D}{ds}\bigg|_{s=0} (N_s)_h=\frac{D}{ds}\bigg|_{s=0} \big(N_s-\escpr{N_s,T}T\big),
\]
where $N_s$ is the Riemannian unit normal to $\varphi_s(S)$. A well-known lemma in Riemannian geometry implies
\[
\frac{D}{ds}\bigg|_{s=0} N_s=-(\nabla_S u)(p)-A_S(U^\top_p),
\]
where $A_S$ is the Weingarten endomorphism of $S$.  Since $\tfrac{D}{ds}\big|_{s=0} T=J(U_p)$ and $\eta$ is horizontal, calling
\[
B(U)=-\escpr{N,T}\escpr{J(U),\eta}+\escpr{U,T}\escpr{N_h,J(\eta)},
\]
we get
\begin{align*}
\tfrac{D}{ds}\big|_{s=0}||(N_s)_h||_*&=\big(\escpr{-\nabla_Su-A_S(U^\top),\eta}\big)_p+B(U_p)
\\
&=-\escpr{\nabla_Su,\eta}_p+B(U^\bot_p)+\big(-\escpr{A_S(U^\bot),\eta}_p+B(U^\top_p)\big)
\\
&=\big(-\escpr{\nabla_Su,\eta}-2u\escpr{N,T}\escpr{J(N_h),\eta}\big)_p+U^\top_p(||N_h||_*).
\end{align*}
%
%
Observe that
\begin{align*}
-\escpr{\nabla_Su,\eta}&=u\divv_S\eta-\divv_S(u\eta)
\\
&=u\divv_S\eta-\divv_S(u\eta^\top)-\divv_S(u\escpr{N,\eta}N)
\\
&=u\divv_S\eta-\divv_S(u\eta^\top)-u\norm{N_h}_*\divv_S N.
\end{align*}
Hence we get
\begin{equation*}
\begin{split}
A'(0)=\int_S\big(u\divv_S\eta&-2u\escpr{N,T}\escpr{J(N_h),\eta}\big)\,dS
\\
&+\int_S \divv_S\big(\norm{N_h}_*U^\top-u\eta^\top\big)\,dS.
\end{split}
\end{equation*}
From here we obtain formula \eqref{eq:A'(0)} since the integral $\int_S \norm{N_h}_*U^\top dS$ is equal to $0$ by the divergence theorem for Lipschitz vector fields.
\end{proof}

Now we simplify the first term appearing in the first variation formula \eqref{eq:A'(0)}.

\begin{lemma}
\label{lem:defH}
Let $S$ be a $C^2$ surface immersed in $\hh^1$ with unit normal $N$ horizontal unit normal $\nu_h$. Let $Z=J(\nu_h)$. Then we have
\begin{equation}
\label{eq:defH0}
\divv_S\eta-2\escpr{N,T}\escpr{J(N_h),\eta}=\escpr{D_Z\eta,Z}.
\end{equation}
\end{lemma}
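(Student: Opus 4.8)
The plan is to evaluate $\divv_S\eta$ in a local orthonormal frame of $TS$ adapted to the contact structure on the regular set $S\setminus S_0$, and then to recognize $2\escpr{N,T}\escpr{J(N_h),\eta}$ as the contribution of the non-horizontal frame direction. On $S\setminus S_0$ the horizontal vector $Z$ spans $T_pS\cap\hhh_p$. Since $Z$ is a horizontal unit vector orthogonal to $\nu_h$ and to $T$, the orthogonal complement of $Z$ inside $T_pS$ is spanned by
\[
W=\escpr{N,T}\,\nu_h-|N_h|\,T ,
\]
which lies in $\mathrm{span}\{\nu_h,T\}$, satisfies $\escpr{W,N}=0$ because $N=|N_h|\,\nu_h+\escpr{N,T}\,T$, and has $|W|=1$ because $|N_h|^{2}+\escpr{N,T}^{2}=|N|^{2}=1$. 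Thus $\{Z,W\}$ is (locally) an orthonormal frame of $TS$, so $\divv_S\eta=\escpr{D_Z\eta,Z}+\escpr{D_W\eta,W}$, and \eqref{eq:defH0} is equivalent to the identity $\escpr{D_W\eta,W}=2\escpr{N,T}\escpr{J(N_h),\eta}$.

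To establish this I would split $\escpr{D_W\eta,W}=\escpr{N,T}\escpr{D_W\eta,\nu_h}-|N_h|\escpr{D_W\eta,T}$, where $D_W$ denotes covariant differentiation along an integral curve of $W$ in $S$. For the first summand, \eqref{eq:corconnection} with $F=W$ gives $\escpr{D_W\eta,\nu_h}=-\escpr{W,T}\escpr{\eta,J(\nu_h)}=|N_h|\escpr{\eta,J(\nu_h)}$, using $\escpr{W,T}=-|N_h|$. For the second summand, differentiate $\escpr{\eta,T}\equiv0$ (valid since $\eta$ is horizontal) along the same curve: because $\tfrac{D}{ds}T_\ga=J(W)$ and $J(T)=0$, tensoriality of $J$ gives $J(W)=\escpr{N,T}J(\nu_h)$, hence $\escpr{D_W\eta,T}=-\escpr{\eta,J(W)}=-\escpr{N,T}\escpr{\eta,J(\nu_h)}$. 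Substituting both and using $|N_h|\,J(\nu_h)=J(|N_h|\nu_h)=J(N_h)$,
\[
\escpr{D_W\eta,W}=2\,\escpr{N,T}\,|N_h|\,\escpr{\eta,J(\nu_h)}=2\,\escpr{N,T}\,\escpr{J(N_h),\eta},
\]
which is exactly what is needed.

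A few remarks on why this is legitimate. Both $\escpr{D_Z\eta,Z}$ and $\escpr{D_W\eta,W}$ are invariant under $Z\mapsto-Z$ and $W\mapsto-W$, so the sign convention for $Z$ in the statement and the choice of sign for $W$ are immaterial; since $\divv_S\eta$ at a point equals $\sum_i\escpr{D_{e_i}\eta,e_i}$ for any local orthonormal frame $\{e_i\}$ of $TS$, working with $\{Z,W\}$ locally suffices. Moreover, although $\eta=\pi(\nu_h)$ is only defined on $S$, every covariant derivative above is taken along a curve contained in $S$, so no extension is needed, and \eqref{eq:corconnection} is already phrased for integral curves of vector fields tangent to $S$. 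I do not expect a genuinely hard step: the only delicate point is making the two pieces $\escpr{N,T}\escpr{D_W\eta,\nu_h}$ and $-|N_h|\escpr{D_W\eta,T}$ come out equal, which hinges on the skew-symmetry of $J$ and on the sign of $\escpr{W,T}$.
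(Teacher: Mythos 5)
Your proof is correct and follows essentially the same route as the paper: the authors also complete $Z$ to an orthonormal frame of $TS$ with $E=\escpr{N,T}\,\nu_h-|N_h|\,T$ (your $W$), apply \eqref{eq:corconnection} for the $\nu_h$-component, and differentiate $\escpr{\eta,T}=0$ together with $D_ET=J(E)$ for the $T$-component, arriving at $\escpr{D_E\eta,E}=2\escpr{N,T}\escpr{\eta,J(N_h)}$. Your remark that the sign of $Z$ is immaterial is also the right way to reconcile the statement's $Z=J(\nu_h)$ with the $Z=-J(\nu_h)$ used elsewhere.
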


\begin{proof}
Let us consider the orthonormal basis in $S\setminus S_0$ given by the vector fields $Z=-J(\nu_h)$ and $E=\langle N,T\rangle \nu_h -|N_h| T=a\nu_h+b T$. Using equation \eqref{eq:corconnection}  with $F=E$, we get
\begin{equation*}
\begin{split}
\escpr{D_E\eta,E}&=a\escpr{D_E\eta,\nu_h}+b\escpr{D_E\eta,T}
\\
&=-a\escpr{E,T}\escpr{\eta,J(\nu_h)}+b\big(E(\escpr{\eta, T})-\escpr{\eta,D_E T}\big)\\
&=-ab\escpr{\eta,J(\nu_h)}-ab\escpr{\eta,J(\nu_h)}\\
&=-2ab\escpr{\eta,J(\nu_h)},
\end{split}
\end{equation*}
as $D_E T=J(E)=a J(\nu_h)=-aZ$. From $ab=-\escpr{N,T}|N_h|$ we obtain
\[
\escpr{D_E\eta,E}=2\escpr{N,T}\escpr{\eta,J(N_h)}.
\]
Taking into account this equation and that $\divv_S\eta=\escpr{D_Z\eta,S}+\escpr{D_E\eta,E}$, we obtain equation \eqref{eq:defH0}.
\end{proof}

\begin{definition}
Given an oriented surface $S$ immersed in $\hh^1$ endowed with a smooth strictly convex left-invariant norm $\norm{\cdot}_K$, its mean curvature is the function 
\begin{equation}
\label{eq:defH}
H=\escpr{D_Z\eta_K,Z},
\end{equation}
defined on $S\setminus S_0$. 
\end{definition}

\begin{remark}
In \cite{snchez2017subfinsler,1711.01585}, the author obtained an expression of the mean curvature of a $C^2$ surface in terms of a parametrization when $\hh^1$ is endowed with the left-invariant norm $\|\cdot\|_\infty$, and defined a notion of distributional mean curvature for polygonal norms.
\end{remark}

\begin{corollary}
\label{end:corH}
Let $S$ be an oriented $C^2$ surface immersed in $\hh^1$. Let $U$ be a $C^2$ vector field with compact support on $S\setminus S_0$, normal component $u=\escpr{U,N}$ and associated one-parameter group of diffeomorphisms $\{\varphi_s\}_{s\in\rr}$. Then
\[
\frac{d}{ds}\bigg|_{s=0} A(\varphi_s(S))=\int_S u H\,dS,
\]
where $H$ is the mean curvature of $S$ defined in \eqref{eq:defH}. 
\end{corollary}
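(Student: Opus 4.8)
The plan is to combine the first variation formula of Theorem~\ref{thm:1stvar} with the pointwise identity of Lemma~\ref{lem:defH}, and to dispose of the remaining divergence term using the hypothesis that $U$ is compactly supported in the non-singular set $S\setminus S_0$.

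First I would record that, by \eqref{eq:A'(0)},
\[
\frac{d}{ds}\bigg|_{s=0}A(\varphi_s(S))=\int_S\big(u\divv_S\eta-2u\escpr{N,T}\escpr{J(N_h),\eta}\big)\,dS-\int_S\divv_S\big(u\eta^\top\big)\,dS.
\]
Since $\text{supp}(U)$ is a compact subset of $S\setminus S_0$, on a neighbourhood of this support the horizontal unit normal $\nu_h$ is a well-defined $C^1$ vector field, and hence so is $\eta=\pi(\nu_h)$, because $\pi$ is of class $C^{\ell-1}$ with $\ell\ge 2$; its tangential part $\eta^\top$ is therefore $C^1$ there as well. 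Consequently $u\eta^\top$ is a $C^1$ vector field tangent to $S$ with compact support, and the divergence theorem on $S$ yields $\int_S\divv_S\big(u\eta^\top\big)\,dS=0$. Everything that follows takes place on $S\setminus S_0$, where the integrands are defined; the integrals are really integrals over $\text{supp}(U)$.

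Next I would apply Lemma~\ref{lem:defH}, which gives
\[
\divv_S\eta-2\escpr{N,T}\escpr{J(N_h),\eta}=\escpr{D_Z\eta,Z}
\]
on $S\setminus S_0$; note that this is unaffected by the sign convention $Z=\pm J(\nu_h)$, since the right-hand side is quadratic in $Z$. By the definition \eqref{eq:defH} the right-hand side is exactly the mean curvature $H$. Substituting this identity and the vanishing of the divergence term into the displayed formula for $A'(0)$ gives
\[
\frac{d}{ds}\bigg|_{s=0}A(\varphi_s(S))=\int_S uH\,dS,
\]
as claimed.

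There is essentially no hard step here: the only point requiring a little care is the justification that $\eta^\top$ is regular enough near the boundary of $\text{supp}(U)$ to apply the divergence theorem — this is guaranteed precisely because $\text{supp}(U)$ is a \emph{compact} subset of the open set $S\setminus S_0$ on which $\eta$ is $C^1$ — together with the observation that, although $H$, $\eta$ and $Z$ are only defined off the singular set, the compact-support hypothesis makes all the integrals above well-defined.
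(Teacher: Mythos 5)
Your argument is correct and is exactly the one the paper intends: the corollary follows from the first variation formula \eqref{eq:A'(0)} by killing the term $\int_S\divv_S(u\eta^\top)\,dS$ via the divergence theorem (legitimate because $u\eta^\top$ is a $C^1$ tangent field compactly supported in $S\setminus S_0$, where $\eta=\pi(\nu_h)$ is regular) and then identifying the remaining integrand with $uH$ through Lemma~\ref{lem:defH} and the definition \eqref{eq:defH}. Your side remark that the identity $\escpr{D_Z\eta,Z}$ is insensitive to the sign choice $Z=\pm J(\nu_h)$ is a correct and worthwhile observation, given the paper's inconsistent sign conventions for $Z$.
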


By equation \eqref{eq:defH}, a unit speed horizontal curve $\Ga$ contained in the regular part of a surface $S$ satisfy the equation
\begin{equation}
\label{eq:GaH}
\escpr{\frac{D}{ds}\pi(J(\dot{\Ga})),\dot{\Ga}}=H,
\end{equation}
where $D/ds$ is the covariant derivative along $\Ga$. Uniqueness of curves $\Ga$ satisfying \eqref{eq:GaH} with given initial conditions $\Ga(0),\dot{\Ga}(0)$ cannot be obtained from \eqref{eq:GaH}. In the next result we prove that the horizontal components of $\Ga$ satisfy indeed an ordinary differential equation, thus providing uniqueness with given initial conditions. 

\begin{corollary}\label{cor}
Let $S$ be a $C^2$ oriented surface immersed in $(\hh^1,\norm{\cdot})$ with mean curvature $H$. Let $\Gamma:I\to S\setminus S_0$ be a horizontal curve in the regular part of $S$ parameterized by arc-length with $\Gamma(s)=(x_1(s),x_2(s),t(s))$. Then $\ga(s)=(x_1,x_2)$ satisfies a differential equation of the form
\begin{equation}
\label{eq:F}
\ddot{\ga}=F(\dot{\ga}),
\end{equation}
where $F(\dot{\ga})=H\,[A(\dot{\ga})](\dot{\ga})$ and $A$ is a nonsingular $C^1$ matrix of order $2$.
\end{corollary}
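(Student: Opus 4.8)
The plan is to work directly with the defining relation $H=\escpr{D_Z\eta,Z}$ restricted to the horizontal curve $\Gamma$, where $Z=-J(\nu_h)$. Along $\Gamma$ parameterized by arc-length, since $\Gamma$ is horizontal and tangent to $S$, we have $\dot\Gamma = Z$ up to sign (both are unit horizontal vectors tangent to $S$), so $\nu_h = -J(\dot\Gamma)$ along $\Gamma$ and $\eta = \eta_K = \pi(\nu_h) = \pi(-J(\dot\Gamma))$. Writing $\dot\Gamma = \dot x_1 X + \dot x_2 Y$ (the $t$-component is determined by horizontality, $\dot t = x_2\dot x_1 - x_1 \dot x_2$, and in fact does not enter the curvature computation since $\eta$ is horizontal and $\Gamma$ is horizontal, so by Corollary~\ref{cor:conrel} the Levi-Civita and pseudo-hermitian covariant derivatives agree along $\Gamma$), the vector $-J(\dot\Gamma) = \dot x_2 X - \dot x_1 Y$, i.e. in the $\rr^2$-identification it is the Euclidean rotation of $(\dot x_1,\dot x_2)$ by $-\pi/2$. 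Thus $\eta$, as a horizontal vector field along $\Gamma$, has coordinates $(p_1(\dot\gamma), p_2(\dot\gamma))$ where $(p_1,p_2) = \pi\circ R_{-\pi/2}$ and $R_{-\pi/2}$ is the clockwise quarter-turn; since $\pi = N^{-1}$ is $C^1$ (indeed $C^{\ell-1}$), $p = (p_1,p_2)$ is a $C^1$ function of $\dot\gamma$ on $\rr^2\setminus\{0\}$, and it is $0$-homogeneous.

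Next I would expand $\escpr{D_Z\eta,Z} = H$ using $\nabla X = \nabla Y = 0$. Since along $\Gamma$ the covariant derivative of $\eta = p_1(\dot\gamma)X + p_2(\dot\gamma)Y$ equals $\dot p_1\, X + \dot p_2\, Y$ (pseudo-hermitian, hence Levi-Civita here), and $\dot p_i = \sum_j \tfrac{\partial p_i}{\partial \dot x_j}(\dot\gamma)\, \ddot x_j = (D\pi\cdot R_{-\pi/2}\cdot\ddot\gamma)_i$ by the chain rule, we get
\begin{equation*}
H = \escpr{D_Z\eta,Z} = \big\langle (Dp)(\dot\gamma)\,\ddot\gamma,\ R_{-\pi/2}\dot\gamma\big\rangle_{\rr^2},
\end{equation*}
where $Dp$ denotes the Jacobian matrix of $p$ and the bracket is the Euclidean inner product in $\rr^2$. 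This is one scalar equation; to upgrade it to a full second-order ODE $\ddot\gamma = F(\dot\gamma)$ I need a second scalar equation, and this is the arc-length constraint $|\dot\gamma|=1$ (note $|\dot\gamma|^2 = \dot x_1^2 + \dot x_2^2$ is exactly the squared horizontal speed, which is $1$). Differentiating, $\escpr{\ddot\gamma,\dot\gamma}_{\rr^2} = 0$. So $\ddot\gamma$ is determined by the two linear equations $\escpr{\ddot\gamma,\dot\gamma} = 0$ and $\escpr{(Dp)(\dot\gamma)\ddot\gamma, R_{-\pi/2}\dot\gamma} = H$, i.e. $\escpr{\ddot\gamma,\, (Dp)(\dot\gamma)^{\!\top} R_{-\pi/2}\dot\gamma} = H$.

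The remaining point — and this is the main obstacle — is to check that the $2\times 2$ coefficient matrix of this linear system is invertible, so that $\ddot\gamma$ can be solved for explicitly, giving $\ddot\gamma = H\,[A(\dot\gamma)]\dot\gamma$ with $A$ a nonsingular $C^1$ matrix (the inverse of a matrix whose rows are $\dot\gamma$ normalized appropriately and $(Dp)^{\top}R_{-\pi/2}\dot\gamma$, up to rescaling; here I would absorb the factor so that $F(\dot\gamma) = H[A(\dot\gamma)]\dot\gamma$ exactly as stated). Invertibility amounts to showing that $(Dp)(\dot\gamma)^{\top} R_{-\pi/2}\dot\gamma$ is not parallel to $\dot\gamma$, equivalently that $R_{-\pi/2}\dot\gamma$ is not in the kernel of $\dot\gamma^{\top}(Dp)(\dot\gamma)^{\top}$ acting the other way — but more cleanly: the map $v\mapsto \pi(v)$ restricted to a ray is constant, so $Dp$ has $\dot\gamma$-related rank-one degeneracy in exactly one direction, and the relevant nondegeneracy is precisely the statement that $\partial K$ has strictly positive geodesic curvature, which makes the Gauss map $N$ (hence $\pi = N^{-1}$, hence $p$) a local diffeomorphism in the complementary direction. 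Concretely: parameterize $\partial K$ by the Gauss map; $p = N^{-1}\circ R_{-\pi/2}$ sends the unit vector $\dot\gamma$ to the boundary point of $K$ with outer normal $R_{-\pi/2}\dot\gamma$, and the derivative of $N^{-1}$ in the tangential direction is $1/(\text{geodesic curvature}) \ne 0$. I would record this as the key computation, conclude the matrix is nonsingular with $C^1$ (indeed $C^{\ell-2}$) entries, invert, and read off $F$ and $A$. Left-invariance of the construction and the fact that only $\dot\gamma$ (not $\gamma$ or $t$) appears confirm the autonomous form $\ddot\gamma = F(\dot\gamma)$.
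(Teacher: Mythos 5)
Your strategy is essentially the paper's: restrict the mean-curvature identity to the horizontal curve, pair it with the orthogonality relation $\escpr{\ddot\ga,\dot\ga}=0$ coming from the arc-length constraint (the paper phrases this as $\tfrac{D}{ds}\dot\Ga$ being proportional to $J(\dot\Ga)$), and solve the resulting $2\times 2$ linear system for $\ddot\ga$, with nondegeneracy coming from the kernel/image structure of $D\pi$, i.e.\ from the positive geodesic curvature of $\ptl K$. The paper organizes the computation slightly differently --- it introduces the coefficient $\la$ with $\tfrac{D}{ds}\dot\Ga=\la J(\dot\Ga)$, computes $\la$ from $f=\escpr{\dot\Ga,\pi(J(\dot\Ga))}$, and assembles the explicit matrix of \eqref{eq:ddga-system-2} --- but the determinant it must control, $\sum_{i,j}\dot x_i\dot x_j\,\ptl\pi_i/\ptl x_j$, is exactly the quantity your nondegeneracy condition reduces to.

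There is, however, one concrete error in your displayed equation: you wrote $H=\escpr{(Dp)(\dot\ga)\,\ddot\ga,\,R_{-\pi/2}\dot\ga}$, but the second slot must carry the coordinates of $Z$, and along $\Ga$ one has $Z=\dot\Ga$, whose horizontal coordinates are $\dot\ga$ itself, not $R_{-\pi/2}\dot\ga$. This is not a harmless sign convention: $R_{-\pi/2}\dot\ga$ is (up to sign) the point at which $D\pi$ is evaluated, hence spans the kernel of $(Dp)(\dot\ga)^{\top}$, because the image of $D\pi(u)$ lies in the tangent line $u^{\perp}$ of $\ptl K$, so $u$ itself is annihilated by the transpose. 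With your formula the second equation of the system collapses to $0=H$ and the coefficient matrix is singular, so the inversion step fails. Replacing $R_{-\pi/2}\dot\ga$ by $\dot\ga$ restores everything: $(Dp)(\dot\ga)^{\top}\dot\ga$ is then a nonzero multiple of $R_{-\pi/2}^{\top}\dot\ga$, transverse to $\dot\ga$, and the nonvanishing of that multiple is precisely the condition $\sum_{i,j}\dot x_i\dot x_j\,\ptl\pi_i/\ptl x_j\neq 0$ that the paper verifies. Your closing justification of this nonvanishing via the Gauss map and the curvature of $\ptl K$ is correct in substance, but it is the crux of the corollary and should be written out in full rather than only sketched.
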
	

\begin{proof}
Let $\tfrac{D}{ds}$ be the covariant derivative along the curve $\Gamma$. Since $\Gamma$ is horizontal and parameterized by arc-length, the vector field $\tfrac{D}{ds}\dot{\Gamma}$ along $\Gamma$ is proportional to $J(\dot{\Gamma})$. Then there exists a function $\la:I\to\rr$ such that
\[
\tfrac{D}{ds}\dot{\Gamma}=\la J(\dot{\Gamma}).
\]
Taking scalar product with $\eta=\pi(J(\dot{\Gamma}))$ we get
\[
\la=\frac{\escpr{\tfrac{D}{ds}\dot{\Gamma},\pi(J(\dot{\Gamma}))}}{||J(\dot{\Gamma})||_*}=\frac{\tfrac{d}{ds}\escpr{\dot{\Gamma},\pi(J(\dot{\Gamma}))}-H}{||J(\dot{\Gamma})||_*}.
\]
Hence we have
\begin{equation}
\label{eq:ddga}
||J(\dot{\Gamma})||_* \tfrac{D}{ds}\dot{\Gamma}-\dot{f}\,J(\dot{\Gamma})=-HJ(\dot{\Gamma}),
\end{equation}
where $f=\escpr{\dot{\Gamma},\pi(J(\dot{\Gamma}))}$. Since $\dot{\Gamma}=\dot{x}_1X+\dot{x_2}Y$, $\tfrac{D}{ds}\dot{\Gamma}=\ddot{x}_1X+\ddot{x}_2 Y$, and $J(\dot{\Gamma})=-\dot{x}_2X+\dot{x}_1Y$, equation \eqref{eq:ddga} is equivalent to the system
\begin{equation}
\label{eq:ddga-system}
\begin{split}
||J(\dot{\Gamma})||_*\, \ddot{x}_1+\dot{f}\dot{x}_2&=H\dot{x}_2,
\\
||J(\dot{\Gamma})||_*\, \ddot{x}_2-\dot{f}\dot{x}_1&=-H\dot{x}_1.
\end{split}
\end{equation}
Let us compute $\dot{f}=df/ds$. Writing $\pi(aX+bY)=\pi_1(a,b)X+\pi_2(a,b)Y$ we have
\[
f=\escpr{\dot{\Gamma},\pi(J(\dot{\Gamma}))}=\dot{x}_1\pi_1(-\dot{x}_2,\dot{x}_1)+\dot{x}_2\pi_2(-\dot{x}_2,\dot{x}_1)
\]
and so:
\begin{equation*}
\dot{f}=\bigg(\pi_1+\dot{x}_1\frac{\ptl\pi_1}{\ptl x_2}+\dot{x}_2\frac{\ptl\pi_2}{\ptl x_2}\bigg)\,\ddot{x}_1+\bigg(\pi_2-\dot{x}_1\frac{\ptl\pi_1}{\ptl x_1}-\dot{x}_2\frac{\ptl\pi_2}{\ptl x_1}\bigg)\,\ddot{x}_2=g\ddot{x}_1+h\ddot{x}_2,
\end{equation*}
where the functions $\pi_1,\pi_2$ are evaluated at $(-\dot{x}_2,\dot{x}_1)$. Hence equation \eqref{eq:ddga-system} is equivalent to
\begin{equation}
\label{eq:ddga-system-2}
\begin{split}
\begin{pmatrix}
||J(\dot{\Gamma})||_*+g\dot{x}_1 & h \dot{x}_2 \\
-g\dot{x}_1 & ||J(\dot{\Gamma})||_*-h\dot{x}_1
\end{pmatrix}
\begin{pmatrix}
\ddot{x}_1 \\ \ddot{x}_2
\end{pmatrix}
=H
\begin{pmatrix}
\dot{x}_2 \\ -\dot{x}_1
\end{pmatrix}
\end{split}
\end{equation}
The determinant of the square matrix in \eqref{eq:ddga-system-2} is equal to
\[
||J(\dot{\Gamma})||_*\big(||J(\dot{\Gamma})||_*+(g\dot{x}_1-h\dot{x}_1)\big).
\]
Since
\begin{align*}
g\dot{x}_1-h\dot{x}_2&=\big(\pi_1\dot{x}_2-\pi_2\dot{x}_1\big)+\sum_{i,j=1}^2\dot{x}_i\dot{x}_j\frac{\ptl\pi_i}{\ptl x_j}=-||J(\dot{\Gamma})||_*+\sum_{i,j=1}^2\dot{x}_i\dot{x}_j\frac{\ptl\pi_i}{\ptl x_j}
\end{align*}
we get that the determinant is equal to
\[
||J(\dot{\Gamma})||_*\sum_{i,j=1}^2\dot{x}_i\dot{x}_j\frac{\ptl\pi_i}{\ptl x_j}
\]
and we write 
\[
\sum_{i,j=1}^2\dot{x}_i\dot{x}_j\frac{\ptl\pi_i}{\ptl x_j}=
\begin{pmatrix}
\dot{x}_1 & \dot{x}_2
\end{pmatrix}
\begin{pmatrix}
\ptl\pi_1/\ptl x_1 & \ptl\pi_1/\ptl x_2
\\
\ptl\pi_2/\ptl x_1 & \ptl\pi_2/\ptl x_2
\end{pmatrix}
\begin{pmatrix}
\dot{x}_1 \\ \dot{x}_2
\end{pmatrix}.
\]
Since the kernel of $\big(\ptl\pi_i/\ptl x_j\big)_{ij}$ is generated by $(-\dot{x}_2,\dot{x}_1)$, we have
\[
\begin{pmatrix}
\ptl\pi_1/\ptl x_1 & \ptl\pi_1/\ptl x_2
\\
\ptl\pi_2/\ptl x_1 & \ptl\pi_2/\ptl x_2
\end{pmatrix}
\begin{pmatrix}
\dot{x}_1 \\ \dot{x}_2
\end{pmatrix}\neq 0,
\]
and, since the image of $\big(\ptl\pi_i/\ptl x_j\big)_{ij}$ is generated by $(\dot{x}_1,\dot{x}_2)$, we get
\[
\begin{pmatrix}
\dot{x}_1 & \dot{x}_2
\end{pmatrix}
\begin{pmatrix}
\ptl\pi_1/\ptl x_1 & \ptl\pi_1/\ptl x_2
\\
\ptl\pi_2/\ptl x_1 & \ptl\pi_2/\ptl x_2
\end{pmatrix}
\begin{pmatrix}
\dot{x}_1 \\ \dot{x}_2
\end{pmatrix}\neq 0.
\]
So we can invert the matrix in  \eqref{eq:ddga-system-2} to get \eqref{eq:F}.
\end{proof}

\begin{remark}
It is not difficult to prove that
\[
\tfrac{D}{ds}\pi(J(\dot{\Gamma}))=H\dot{\Gamma}-||J(\dot{\Gamma})||_*\,T.
\]
Indeed it is only necessary to show that $\escpr{\tfrac{D}{ds}\pi(J(\dot{\Gamma})),J(\dot{\Gamma})}=0$, which follows from \eqref{eq:corconnection-2} using that $J(\dot{\Ga})=\nu_h$. Observe that the above equation is equivalent to
\[
\big[\tfrac{D}{ds}\pi(J(\dot{\Gamma}))\big]_h=H\dot{\Gamma}.
\]
Writing $\dot{\Ga}=\dot{x}X+\dot{y}Y$, we have
\begin{equation*}
\begin{pmatrix}
\ptl\pi_1/\ptl x_1 & \ptl\pi_1/\ptl x_2
\\
\ptl\pi_2/\ptl x_1 & \ptl\pi_2/\ptl x_2
\end{pmatrix}
\begin{pmatrix}
-\ddot{y} \\ \ddot{x}
\end{pmatrix}
=H
\begin{pmatrix}
\dot{x} \\ \dot{y}
\end{pmatrix}.
\end{equation*}
However, since the determinant of the square matrix is $0$ we cannot invert it to obtain an ordinary differential equation for $(\ddot{x},\ddot{y})$.
\end{remark}

\begin{lemma}
\label{eq:wulffcurve}
Let $\norm{\cdot}$ be a  $C^2_+$ left-invariant norm in $\hh^1$. Let $\ga:I\to\rr^2$ be a unit speed clockwise parameterization of a translation of the unit sphere of $\norm{\cdot}$ in $\rr^2$ by a vector $v\in\rr^2$. Let $\Gamma$ be a horizontal lifting of $z$. Then $\Gamma$ satisfies the equation
\begin{equation}
\label{eq:wulff-curves}
1=\escpr{\tfrac{D}{ds}\pi(J(\dot{\Gamma})),\dot{\Gamma}}.
\end{equation}
\end{lemma}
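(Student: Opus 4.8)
The plan is to prove \eqref{eq:wulff-curves} by a direct computation in the left-invariant frame $\{X,Y,T\}$, the only genuinely geometric input being the identification of the vector field $\pi(J(\dot\Ga))$ along $\Ga$.

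First I would set coordinates $\Ga(s)=(x_1(s),x_2(s),t(s))$ with $(x_1,x_2)=\ga$. Since $\Ga$ is horizontal, $\omega(\dot\Ga)=0$ forces $\dot t=x_2\dot x_1-x_1\dot x_2$; rewriting $\ptl_x,\ptl_y,\ptl_t$ in the frame $\{X,Y,T\}$ then gives $\dot\Ga=\dot x_1 X+\dot x_2 Y$, and since $\ga$ is parametrized by Euclidean arc length and $\{X,Y,T\}$ is $g$-orthonormal, $|\dot\Ga|=1$. Consequently $J(\dot\Ga)=-\dot x_2 X+\dot x_1 Y$, which under the identification $aX+bY\leftrightarrow(a,b)$ corresponds to the planar vector $(-\dot x_2,\dot x_1)$, a unit vector.

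The key step is to show that $\pi(J(\dot\Ga))_{\Ga(s)}=(x_1(s)-v_1)X+(x_2(s)-v_2)Y$, i.e.\ that $\pi(J(\dot\Ga))$ is the left-invariant extension of the point $\ga(s)-v\in\ptl K$. By left-invariance, $\pi$ acts on a horizontal vector $aX_p+bY_p$ through the planar map $\pi_K=N^{-1}$ applied to $(a,b)/|(a,b)|$, where $N\colon\ptl K\to\sph^1$ is the Gauss map. Now $\ga-v$ is a unit-speed \emph{clockwise} parametrization of $\ptl K$, so the Euclidean outward normal of $\ptl K$ at $\ga(s)-v$ is $\dot\ga(s)$ rotated by $+\pi/2$, namely $(-\dot x_2,\dot x_1)$; that is, $N(\ga(s)-v)=(-\dot x_2,\dot x_1)$, whence $\pi_K((-\dot x_2,\dot x_1))=\ga(s)-v$. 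This is exactly where the clockwise orientation enters: with the opposite orientation $J(\dot\Ga)$ would point to the \emph{inner} normal direction and $\pi(J(\dot\Ga))$ would not be a translate of $\ga$.

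Finally I would differentiate. With $\pi(J(\dot\Ga))=(x_1-v_1)X+(x_2-v_2)Y$ along $\Ga$ and $v$ constant,
\[
\tfrac{D}{ds}\pi(J(\dot\Ga))=\dot x_1 X+\dot x_2 Y+(x_1-v_1)\,D_{\dot\Ga}X+(x_2-v_2)\,D_{\dot\Ga}Y,
\]
and \eqref{eq:christoffel} gives $D_{\dot\Ga}X=\dot x_2 T$ and $D_{\dot\Ga}Y=-\dot x_1 T$, so
\[
\tfrac{D}{ds}\pi(J(\dot\Ga))=\dot x_1 X+\dot x_2 Y+\big((x_1-v_1)\dot x_2-(x_2-v_2)\dot x_1\big)T.
\]
Pairing with $\dot\Ga=\dot x_1 X+\dot x_2 Y$ and using $T\perp X,Y$ yields $\escpr{\tfrac{D}{ds}\pi(J(\dot\Ga)),\dot\Ga}=\dot x_1^{\,2}+\dot x_2^{\,2}=1$, which is \eqref{eq:wulff-curves}. (As a consistency check, the displayed formula agrees with $\tfrac{D}{ds}\pi(J(\dot\Ga))=H\dot\Ga-\norm{J(\dot\Ga)}_*\,T$ from the remark after Corollary~\ref{cor}, with $H=1$, since $\norm{J(\dot\Ga)}_*=\escpr{J(\dot\Ga),\pi(J(\dot\Ga))}=(x_2-v_2)\dot x_1-(x_1-v_1)\dot x_2$.) The main obstacle is the orientation and normalization bookkeeping in the key step; once $\pi(J(\dot\Ga))$ is identified, the remainder is the one-line computation above.
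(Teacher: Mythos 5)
Your proof is correct and follows essentially the same route as the paper's: identify $\pi(J(\dot{\Gamma}))$ along $\Gamma$ with the left-invariant extension of the position vector $\ga-v\in\ptl K$, using that $J(\dot{\ga})$ is the outer normal to $\ptl K$ at $\ga-v$ for the clockwise parametrization, then differentiate and pair with $\dot{\Gamma}$. You are in fact slightly more careful than the paper, whose proof writes $\tfrac{D}{ds}\pi(J(\dot{\Gamma}))=\dot{x}X+\dot{y}Y$ and omits the $T$-component $\big((x_1-v_1)\dot{x}_2-(x_2-v_2)\dot{x}_1\big)T=-\norm{J(\dot{\Gamma})}_*T$ that you compute; this omission is harmless since that component is annihilated in the inner product with the horizontal vector $\dot{\Gamma}$.
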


\begin{proof}
We have $\pi(J(\dot{\Gamma}))=\pi_1(J(\dot{\ga}))X+\pi_2(J(\dot{\ga}))Y$. Since $J(\dot{\ga})$ is the outer normal to the unit sphere at $\ga-v$ we have $\ga-v=\big(\pi_1(J(\dot{\Gamma})),\pi_2(J(\dot{\Gamma}))\big)$. Hence $\tfrac{D}{ds}\pi(J(\dot{\Gamma}))=\dot{x}X+\dot{y}Y$ and we get \eqref{eq:wulff-curves}.
\end{proof}

\begin{lemma}
Let $\norm{\cdot}$ be a  $C^2_+$ left-invariant norm in $\hh^1$ and $\Gamma$ a horizontal curve parameterized by arc-length satisfying the equation $\escpr{\tfrac{D}{ds}\pi(J(\dot{\Gamma})),\dot{\Gamma}}=H$, with $H\in\rr$. Then $\sg(s)=h_\la(\Gamma(s/\la))$ is parameterized by arc-length and  $\escpr{\tfrac{D}{ds}\pi(J(\dot{\sg})),\dot{\sg}}=H/\la$.
\end{lemma}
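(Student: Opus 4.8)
The plan is to use that $h_\la$ is the non-homogeneous dilation $h_\la(z,t)=(\la z,\la^{2}t)$, which is a Lie group automorphism of $\hh^1$ whose differential acts on the standard frame by $dh_\la(X)=\la X$, $dh_\la(Y)=\la Y$ and $dh_\la(T)=\la^{2}T$, the right-hand sides being evaluated at the image point (a direct check from $X=\ptl/\ptl x+y\,\ptl/\ptl t$, etc.). Writing $\dot\Gamma=\dot x_1 X+\dot x_2 Y$ along $\Gamma$, the chain rule applied to $\sg(s)=h_\la(\Gamma(s/\la))$ gives $\dot\sg(s)=\tfrac1\la\,dh_\la\big(\dot\Gamma(s/\la)\big)=\dot x_1(s/\la)\,X+\dot x_2(s/\la)\,Y$ at $\sg(s)$. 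Hence $\sg$ is horizontal, and $|\dot\sg(s)|^{2}=\dot x_1(s/\la)^{2}+\dot x_2(s/\la)^{2}=|\dot\Gamma(s/\la)|^{2}=1$, so $\sg$ is parameterized by arc-length.

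For the curvature identity, first I would reduce the computation of $\escpr{\tfrac{D}{ds}\pi(J(\dot\sg)),\dot\sg}$ to coordinates. Since $\sg$ is horizontal, its Levi-Civita and pseudo-hermitian covariant derivatives along $\sg$ coincide by Corollary~\ref{cor:conrel}, and since $\nabla X=\nabla Y=0$ one has $\tfrac{D}{ds}(fX+gY)=\dot f\,X+\dot g\,Y$ for any functions $f,g$ along $\sg$. By left-invariance of the norm, in the $\{X,Y\}$-frame the map $\pi$ is given by a fixed basepoint-independent map $(\pi_1,\pi_2)\colon\rr^{2}\setminus\{0\}\to\ptl K$, and it is homogeneous of degree $0$; therefore $\pi(J(\dot\sg))$ has the very same components $\pi_1(-\dot x_2,\dot x_1),\pi_2(-\dot x_2,\dot x_1)$, evaluated at $s/\la$, as $\pi(J(\dot\Gamma))$ has at $s/\la$. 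Differentiating these components in $s$ via the substitution $r=s/\la$ produces exactly one factor $1/\la$, and taking the scalar product with $\dot\sg$—which has the same $\{X,Y\}$-components as $\dot\Gamma(s/\la)$—yields $\escpr{\tfrac{D}{ds}\pi(J(\dot\sg)),\dot\sg}\big|_{s}=\tfrac1\la\,\escpr{\tfrac{D}{dr}\pi(J(\dot\Gamma)),\dot\Gamma}\big|_{r=s/\la}=H/\la$.

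The only point requiring care is the bookkeeping of basepoints and scalings: one must keep track that the $\{X,Y\}$-components of a horizontal vector field are exactly what enter both $\pi$ and the scalar product, that the degree-zero homogeneity of $\pi$ makes the $\la$-scaling of $dh_\la(\dot\Gamma)$ invisible, and that the single surviving factor $1/\la$ comes purely from the reparameterization $r=s/\la$. I would also note, as a sanity check, that the same scaling follows conceptually: $h_\la$ multiplies $\norm{\cdot}$, hence the sub-Finsler area, by $\la$, so by the first-variation formula of Corollary~\ref{end:corH} the mean curvature must transform by $1/\la$ under $h_\la$; but the direct argument above is shorter and avoids appealing to surfaces.
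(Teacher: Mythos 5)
Your proposal is correct and follows essentially the same route as the paper, whose entire proof is the observation that $\dot{\sg}(s)$ and $J(\dot{\sg}(s))$ have the same left-invariant frame components as $\dot{\Gamma}(s/\la)$ and $J(\dot{\Gamma}(s/\la))$, with the factor $1/\la$ coming from the reparameterization. You have simply made explicit the details (the action of $dh_\la$ on the frame, the degree-zero homogeneity of $\pi$, and the chain rule) that the paper leaves implicit.
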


\begin{proof}
We have $\dot{\sg}(s)=\dot{\Gamma}(s/\la)$ and $J(\dot{\sg}(s))=J(\dot{\Gamma}(s/\la))$.
\end{proof}

\begin{remark}
Horizontal straight lines are solutions of
\[
\escpr{\tfrac{D}{ds}\pi(J(\dot{\Gamma})),\dot{\Gamma}}=0
\]
since $\dot{\Gamma}$ is the restriction of a left-invariant vector field in $\hh^1$ and so they are $J(\dot{\Gamma})$ and $\pi(J(\dot{\Gamma}))$.
\end{remark}

\begin{theorem}
\label{thm:horizontal}
Let $\norm{\cdot}$ be a  $C^2_+$ left-invariant norm in $\hh^1$. Let $\Gamma$ be a horizontal curve satisfying the equation
\begin{equation}
\label{eq:eqhorcurveH}
\escpr{\tfrac{D}{ds}\pi(J(\dot{\Gamma})),\dot{\Gamma}}=H,
\end{equation}
for some $H\ge 0$. Then $\Gamma$ is either a horizontal straight line if $H=0$ or the horizontal lifting  of a dilation and traslation of a unit speed clockwise parameterization of the circle $\norm{\cdot}=1$ in $\rr^2$ in case $H> 0$.
\end{theorem}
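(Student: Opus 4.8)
The plan is to reduce equation \eqref{eq:eqhorcurveH} to the ordinary differential equation for the planar projection obtained in Corollary~\ref{cor} and then appeal to uniqueness of solutions, comparing $\Gamma$ with the explicit solutions at hand: horizontal straight lines when $H=0$, and horizontal liftings of dilations and translations of $\ptl K$ when $H>0$ (Lemma~\ref{eq:wulffcurve} together with the dilation lemma above). I would write $\Gamma(s)=(x_1(s),x_2(s),t(s))$, $\ga=(x_1,x_2)$, and use that horizontality forces $\dot t=\dot x_1 x_2-\dot x_2 x_1$, so that $\Gamma$ is determined by $\ga$ and the single value $\Gamma(0)$; as in the preceding results, $\Gamma$ is parameterized by arc-length, so $|\dot\ga|\equiv 1$. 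For $H=0$, Corollary~\ref{cor} gives $\ddot\ga=F(\dot\ga)=H\,[A(\dot\ga)](\dot\ga)=0$, hence $\dot\ga$ is constant and $\ga$ parameterizes a straight line in $\rr^2$; the formula for $\dot t$ then forces $\dot t$ to be constant too, so $\Gamma$ is a Euclidean straight line with constant horizontal tangent, i.e.\ a horizontal straight line.

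For $H>0$ I would first reduce to $H=1$: by the dilation lemma above, $\sg(s)=h_{H}(\Gamma(s/H))$ is a horizontal arc-length curve with $\escpr{\tfrac{D}{ds}\pi(J(\dot\sg)),\dot\sg}=1$, and one checks directly that $h_{H}$ carries horizontal liftings of planar curves to horizontal liftings of their dilations by the factor $H$, so it is enough to treat $H=1$ and afterwards apply $h_{1/H}$. Assume then $H=1$. The proof of Corollary~\ref{cor} uses only equation \eqref{eq:eqhorcurveH}, so it applies to $\ga$ and shows that $v=\dot\ga$ solves the autonomous first order system $\dot v=F(v)$, whose right-hand side $F(v)=[A(v)]v$ is of class $C^1$ because $A$ is a nonsingular $C^1$ matrix; hence, by Picard--Lindel\"of, the solution is uniquely determined by $\ga(0)$ and the unit vector $\dot\ga(0)$.

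Next I would produce a Wulff solution with the same initial data. By Lemma~\ref{eq:wulffcurve}, the horizontal lifting of the unit speed clockwise parameterization of any translate $(\ptl K)+w$ also satisfies \eqref{eq:eqhorcurveH} with $H=1$, so its planar projection solves the same system $\dot v=F(v)$. Since $\ptl K$ is $C^2_+$, its Gauss map $N$ is a diffeomorphism onto $\sph^1$, so there is a unique $u_0\in\ptl K$ having clockwise tangent $\dot\ga(0)$, i.e.\ with $N(u_0)=J(\dot\ga(0))$, and then the translate $(\ptl K)+w$ with $w=\ga(0)-u_0$ has a clockwise parameterization passing through $\ga(0)$ with velocity $\dot\ga(0)$. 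By uniqueness the two planar curves coincide, and since a horizontal curve is recovered from its projection and one point, $\Gamma$ is the horizontal lifting through $\Gamma(0)$ of that translate of $\ptl K$. Undoing the dilation, for general $H$ this gives that $\Gamma$ is the horizontal lifting of a dilation (by the factor $1/H$) and translation of a unit speed clockwise parameterization of $\norm{\cdot}=1$.

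The main obstacle is the matching of initial data: it depends on the system of Corollary~\ref{cor} being genuinely first order in $\dot\ga$ with $C^1$ right-hand side, so that Picard--Lindel\"of gives uniqueness, and on the strict convexity of $K$, which makes the Gauss map of $\ptl K$ bijective and so lets one prescribe simultaneously the base point and the tangent direction of a translate of $\ptl K$. Were $K$ merely convex, this correspondence could fail to be injective and the comparison argument would break down.
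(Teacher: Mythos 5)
Your proposal is correct and follows essentially the same route as the paper: the authors also verify that horizontal lines and horizontal liftings of translated and dilated copies of $\ptl K$ satisfy \eqref{eq:eqhorcurveH}, and then invoke uniqueness for the planar ODE \eqref{eq:F} of Corollary~\ref{cor}, using translations and dilations to match any prescribed initial condition. You have merely filled in the details the paper leaves implicit (the reduction to $H=1$, the recovery of $\Ga$ from its projection and one point, and the use of the Gauss map of $\ptl K$ to match the initial tangent), all of which are consistent with the paper's argument.
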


\begin{proof}
Horizontal straight lines and horizontal liftings of translations and dilations of the unit circle $\norm{\cdot}=1$ in $\rr^2$ satisfy equation \eqref{eq:eqhorcurveH}. Uniqueness follow since the projection to $t=0$ satisfy equation \eqref{eq:F} and, by using translations and dilations, we can obtain any prescribed initial condition.
\end{proof}

\begin{remark}
The result in Theorem~\ref{thm:horizontal} includes that constant mean curvature surfaces for the sub-Riemannian area in the Heisenberg group are foliated by geodesics. This result can be found, with slight variations, in \cite{MR2165405,MR2481053,MR2983199,MR3412382,MR3406514}. 
\end{remark}

To finish this section we prove the following result, that holds trivially for variations supported in the regular part of $S$.

\begin{proposition} 
\label{prop:varform}
Let $S$ be a compact $C^2$ oriented surface in $(\hh^1,\norm{\cdot})$ enclosing a region $E$. Assume that $S$ has constant mean curvature $H$ and a finite number of singular points. Then
\begin{enumerate}
\item $S$ is a critical point of the sub-Finsler area for any volume-preserving variation.
\item $S$ is a critical point of the functional $A-H\,|\cdot|$.
\end{enumerate}
\end{proposition}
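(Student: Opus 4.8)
The plan is to reduce everything to the single identity
\[
A'(0)=\frac{d}{ds}\bigg|_{s=0}A(\varphi_s(S))=H\int_S u\,dS,
\]
valid for \emph{every} $C^2$ variation of $S$ with variation field $U$ and normal component $u=\escpr{U,N}$; granting this, both statements follow from the first variation of volume. The one genuine difficulty is that a variation is allowed to move the finitely many singular points of $S$, so the ``divergence term'' in the first variation formula \eqref{eq:A'(0)} cannot be discarded by a bare application of the divergence theorem; removing it will require a cut-off argument which, in turn, relies crucially on the hypothesis that $H$ is constant.

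I would begin with Theorem~\ref{thm:1stvar} and Lemma~\ref{lem:defH}. Substituting \eqref{eq:defH0} into \eqref{eq:A'(0)} gives, with $\eta=\pi(\nu_h)$,
\[
A'(0)=\int_{S\setminus S_0}u\,H\,dS-\int_{S\setminus S_0}\divv_S\!\big(u\eta^{\top}\big)\,dS,
\]
the integrals being taken over $S\setminus S_0$ since $\eta$ is defined only off the singular set. The next step is to check that both integrands lie in $L^1(S)$, and here the constancy of $H$ is used: \eqref{eq:defH0} reads $\divv_S\eta=H+2\escpr{N,T}\escpr{J(N_h),\eta}$, and since $|\escpr{N,T}|\le 1$, $|J(N_h)|\le|N_h|\le 1$ and $|\eta|\le\max_{w\in\ptl K}|w|$, we get $\divv_S\eta\in L^{\infty}(S\setminus S_0)$. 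Writing $u\eta^{\top}=u\eta-u\escpr{\eta,N}N$ and using the product rule,
\[
\divv_S\!\big(u\eta^{\top}\big)=\escpr{\nabla_S u,\eta}+u\,\divv_S\eta-u\,\escpr{\eta,N}\,\divv_S N,
\]
which is bounded on $S\setminus S_0$ because $u\in C^2$ and $\divv_S N$ is continuous on the compact surface $S$. Thus $u\,H$ and $\divv_S(u\eta^{\top})$ lie in $L^{\infty}(S\setminus S_0)\subset L^1(S)$.

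The heart of the matter is to show $\int_{S\setminus S_0}\divv_S(u\eta^{\top})\,dS=0$. Let $S_0=\{p_1,\dots,p_m\}$. Since a point has vanishing capacity in a surface, for each $\eps\in(0,1)$ there is a Lipschitz cut-off $\phi_{\eps}\colon S\to[0,1]$ with $\phi_{\eps}\equiv 0$ on $\bigcup_j B(p_j,\eps^{2})$, $\phi_{\eps}\equiv 1$ off $\bigcup_j B(p_j,\eps)$, and $\int_S|\nabla_S\phi_{\eps}|\,dS\le C/|\log\eps|$ (take the logarithmic profile $r\mapsto\log(r/\eps^{2})/\log(1/\eps)$ on the annuli $B(p_j,\eps)\setminus B(p_j,\eps^{2})$). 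Then $\phi_{\eps}\,u\eta^{\top}$ is a Lipschitz tangent vector field on the compact boundaryless surface $S$, so the divergence theorem for Lipschitz vector fields yields
\[
0=\int_S\divv_S\!\big(\phi_{\eps}\,u\eta^{\top}\big)\,dS=\int_S\phi_{\eps}\,\divv_S\!\big(u\eta^{\top}\big)\,dS+\int_S\escpr{u\eta^{\top},\nabla_S\phi_{\eps}}\,dS.
\]
Letting $\eps\to 0$, the first integral converges to $\int_{S\setminus S_0}\divv_S(u\eta^{\top})\,dS$ by dominated convergence (using $\divv_S(u\eta^{\top})\in L^1(S)$ and $\phi_{\eps}\to 1$ pointwise off $S_0$), while the second is bounded in absolute value by $\|u\eta^{\top}\|_{L^{\infty}(S)}\int_S|\nabla_S\phi_{\eps}|\,dS\to 0$. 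Hence the divergence term vanishes and $A'(0)=H\int_S u\,dS$.

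Finally, the first variation of the Riemannian volume of $E_s=\varphi_s(E)$ is $\tfrac{d}{ds}\big|_{s=0}|E_s|=\int_E\divv U\,d\hh^1=\int_S u\,dS$ by the divergence theorem. If the variation preserves volume this vanishes, so $A'(0)=H\int_S u\,dS=0$, which proves (1); and for an arbitrary $C^2$ variation, $\tfrac{d}{ds}\big|_{s=0}\big(A-H\,|\cdot|\big)(\varphi_s(S))=A'(0)-H\int_S u\,dS=0$, which proves (2). As anticipated, the main obstacle is the singular-set step: $\eta^{\top}$ has no continuous extension across $S_0$ and the variation genuinely moves these points, so the vanishing of $\int_{S\setminus S_0}\divv_S(u\eta^{\top})\,dS$ is not automatic; it is the constancy of $H$, through Lemma~\ref{lem:defH}, that furnishes the $L^1$ bound needed to pass the logarithmic cut-off to the limit.
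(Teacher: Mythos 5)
Your proposal is correct and follows essentially the same route as the paper: reduce to showing $\int_{S\setminus S_0}\divv_S(u\eta^{\top})\,dS=0$, excise the finitely many singular points, and pass to the limit using that $u\eta^{\top}$ is bounded and that $\divv_S(u\eta^{\top})$ is uniformly bounded (the latter via the product-rule expansion and the constancy of $H$ through Lemma~\ref{lem:defH}). The only difference is cosmetic: the paper removes shrinking balls $B_\eps(p_i)$ and kills the boundary integrals because the lengths of $\ptl B_\eps(p_i)$ tend to zero, whereas you use a logarithmic Lipschitz cut-off and kill the gradient term — two equivalent implementations of the same excision argument.
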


\begin{proof}
It is only necessary to prove that if $U$ is a smooth vector field with compact support in $\hh^1$ and $\{\varphi_s\}_{s\in\rr}$ is its associated flow, then
\[
\frac{d}{ds}\bigg|_{s=0} A(\varphi_s(S))=\int_S Hu\,dS.
\]
From formula \eqref{eq:A'(0)} this is equivalent to proving that 
\begin{equation*}
\int_S \divv_S\big(u\eta^\top\big)\,dS=0.
\end{equation*}
To compute the integral $\int_S u\eta^\top dS$ we consider the finite number of singular points $p_1,\ldots,p_n$, and take small disjoint balls $B_i(p_i)$ centered at the points $p_i$. For $\eps>0$ small enough so that the balls $B_\eps(p_i)$ are contained in $B_i$ we have
\[
\int_{S\setminus \bigcup_{i=1}^n B_\eps(p_i)}\divv u \eta^\top \,dS=\sum_{i=1}^n \int_{\ptl B_\eps(p_i)} \escpr{\xi_i,u\eta^\top }\,d(\ptl B_\eps(p_i)),
\]
where $\xi_i$ is the unit inner normal to $\ptl B_\eps(p_i)$. Since $u\eta^\top $ is bounded and the lengths of $\ptl B_\eps(p_i)$ go to $0$ when $\eps\to 0$ we have
\[
\lim_{\eps\to 0}\sum_{i=1}^n \int_{\ptl B_\eps(p_i)} \escpr{\xi_i,u\eta^\top }\,d(\ptl B_\eps(p_i))=0.
\]
Since the modulus of
\begin{align*}
\divv_S(u\eta^\top)&=\escpr{\nabla_S u,\eta ^\top}+u\divv_S \eta^\top
\\
&=\escpr{\nabla_S u,\eta^\top}+u\,(\divv_S \eta-\escpr{\eta^\top,N}\divv_S N)
\end{align*}
is uniformly bounded, the dominated convergence theorem implies
\begin{align*}
\int_S\divv_S u\eta^\top dS&=\lim_{\eps\to 0} \int_{S\setminus \bigcup_{i=1}^n B_\eps(p_i)}\divv u \eta^\top \,dS
\\
&=\lim_{\eps\to 0}\sum_{i=1}^n \int_{\ptl B_\eps(p_i)} \escpr{\xi_i,u\eta^\top }\,d(\ptl B_\eps(p_i))=0. \qedhere
\end{align*}
\end{proof}
%

\begin{corollary}[Minkowski formula]
\label{cor:minkowski}
Let $S$ be a compact $C^2$ oriented surface in $(\hh^1,\norm{\cdot})$ enclosing a region $E$. Assume that $S$ has constant mean curvature $H$ and a finite number of singular points. Then
\begin{equation}
\label{eq:minkowski}
3A(S)-4H|E|=0.
\end{equation}
\end{corollary}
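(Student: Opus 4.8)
The natural strategy is to apply the variational characterization of Proposition~\ref{prop:varform} to a specific, explicitly chosen vector field whose flow is the group of non-homogeneous (anisotropic) dilations of $\hh^1$, and then to compute how the area $A(S)$ and the volume $|E|$ scale under this flow. In $\hh^1$ the relevant one-parameter group is $\delta_s(z,t)=(e^s z, e^{2s} t)$, generated by the vector field $U = x\,\ptl_x + y\,\ptl_y + 2t\,\ptl_t$. The key scaling facts are: first, $|\delta_s(E)| = e^{4s}|E|$, so $\frac{d}{ds}\big|_{s=0}|\delta_s(E)| = 4|E|$; second, the sub-Finsler area functional $A_K$ scales with weight $3$, i.e. $A_K(\delta_s(S)) = e^{3s} A_K(S)$, hence $\frac{d}{ds}\big|_{s=0} A_K(\delta_s(S)) = 3A(S)$. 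The weight-$3$ homogeneity of $A_K$ follows from the fact that $\norm{N_h}_{K,*}\,dS$ transforms the same way under $\delta_s$ as in the sub-Riemannian case $K=D$ (since $\delta_s$ acts linearly on the horizontal distribution by scalar multiplication by $e^s$, which commutes with any norm on $\hhh$), combined with the standard fact that the sub-Riemannian perimeter in $\hh^1$ is $3$-homogeneous under $\delta_s$.

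First I would verify that $U = x\,\ptl_x + y\,\ptl_y + 2t\,\ptl_t$ is smooth on all of $\hh^1$ (it is, being linear) so that Proposition~\ref{prop:varform}(2) applies: since $S$ has constant mean curvature $H$ and finitely many singular points, it is a critical point of $A - H|\cdot|$ for the flow $\{\delta_s\}$. Concretely this gives
\[
\frac{d}{ds}\bigg|_{s=0}\Big(A(\delta_s(S)) - H\,|\delta_s(E)|\Big) = 0.
\]
Then I would substitute the two scaling computations above to obtain $3A(S) - 4H|E| = 0$, which is exactly \eqref{eq:minkowski}.

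The main technical point — really the only obstacle — is to justify the scaling $A_K(\delta_s(S)) = e^{3s}A_K(S)$ carefully, including the behavior at the singular set. One route is: the map $\delta_s$ pushes the horizontal distribution to itself and acts on each $\hhh_p$ as multiplication by $e^s$ (in the $X,Y$ frame, since $d\delta_s(X) = e^{s}X\circ\delta_s$ and likewise for $Y$, as one checks from the formulas for $X,Y$ and $\delta_s$), so by left-invariance of $\norm{\cdot}_K$ the dual norm transforms homogeneously; meanwhile the relation \eqref{eq:muKmuD}, $|\ptl E|_K = \norm{\nu_h}_{K,*}\,|\ptl E|_D$, reduces the claim to the known $3$-homogeneity of the standard sub-Riemannian perimeter $|\ptl E|_D$ under $\delta_s$ together with the observation that $\nu_h$ (a normalized horizontal direction) is unchanged in direction by $\delta_s$. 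Alternatively, and perhaps more cleanly, one can compute $\frac{d}{ds}\big|_{s=0}A(\delta_s(S))$ directly from the first variation formula \eqref{eq:A'(0)} applied to $U$, using $u = \escpr{U,N}$ and the structure of $\eta$, but this is more laborious and the homogeneity argument is preferable. Either way, once the two derivatives are in hand, \eqref{eq:minkowski} is immediate. A remark worth including: the volume scaling $|\delta_s(E)| = e^{4s}|E|$ is just the change of variables formula, since $\det d\delta_s = e^s\cdot e^s\cdot e^{2s} = e^{4s}$.
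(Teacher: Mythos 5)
Your proposal is correct and follows essentially the same route as the paper: the authors also apply Proposition~\ref{prop:varform} to the generator of the non-homogeneous dilations $\varphi_s(x,y,t)=(e^sx,e^sy,e^{2s}t)$ and use the scaling identities $\frac{d}{ds}\big|_{s=0}A(\varphi_s(S))=3A(S)$ and $\frac{d}{ds}\big|_{s=0}|\varphi_s(E)|=4|E|$. Your extra justification of the $3$-homogeneity of $A_K$ via \eqref{eq:muKmuD} is a detail the paper leaves implicit, but the argument is the same.
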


\begin{proof}
We consider the vector field $W=x\tfrac{\ptl}{\ptl x}+y\tfrac{\ptl}{\ptl y}+2\tfrac{\ptl}{\ptl t}$ and its associated flow $\varphi_s((x,y,t))=(e^sx,e^sy,e^{2s}t)$. Since
\[
\frac{d}{ds}\bigg|_{s=0} A(\varphi_s(S))=3A(S),\quad  \frac{d}{ds}\bigg|_{s=0} |\varphi_s(E)|=4|E|,
\]
Proposition~\ref{prop:varform} implies
\[
0=\frac{d}{ds}\bigg|_{s=0} A(\varphi_s(S))-H\frac{d}{ds}\bigg|_{s=0} |\varphi_s(E)|=3A(S)-4H|E|. \qedhere
\]
\end{proof}

\section{Pansu-Wulff spheres and examples}
\label{sec:examples}

We consider a convex body $K\subset\rr^2$ containing $0$ in its interior and the associated norm
$\norm{\cdot}_K$ in $\hh^1$.

\begin{definition}
Consider a clockwise-oriented $L$-periodic parameterization $\ga:\rr\to\rr^2$ of the curve $\norm{\cdot}_K=1$. For fixed $v\in \rr$ take the translated curve $u\mapsto\ga(u+v)-\ga(v)$ and its horizontal lifting $\Ga_{v}:\rr\to\hh^1$ with initial point $(0,0,0)$ at $u=0$.

The set $\sph_K$ is defined as 
\begin{equation}
\sph_K=\bigcup_{v\in [0,L)} \Ga_{v}([0,L]).
\end{equation}
We shall refer to $\sph_K$ as the \emph{Pansu-Wulff sphere} associated to the left-invariant norm $\norm{\cdot}_K$.
\end{definition}

When $K=D$, the closed unit disk centered at the origin in $\rr^2$, the Pansu-Wulff sphere $\sph_D$ is Pansu's sphere, see \cite{MR676380,MR829003}.

\begin{remark}
In the construction of the Pansu-Wulff sphere we are not assuming any regularity on the boundary of $K$. Since $\ptl K$ is a locally Lipschitz curve, its horizontal lifting is well defined.
\end{remark}

\begin{remark}
The set $\sph_K$ is union of curves leaving from $(0,0,0)$ that meet again at the point $(0,0,2|K|)$. Since $\ga$ is $L$-periodic, the construction is $L$-periodic in $v$ and so $\sph_K$ is the image of a continuous map from a sphere to $\hh^1$.
\end{remark}

\begin{example}
Given the Euclidean norm $|\cdot|$ in $ \rr^2$ and $a=(a_1,a_2)$, where $a_1,a_2>0$, we define  the norm:
\[
||(x_1,x_2)||_a=|(\tfrac{x_1}{a_1},\tfrac{x_2}{a_2})|.
\]

\begin{figure}[h]
\includegraphics[width=0.5\textwidth]{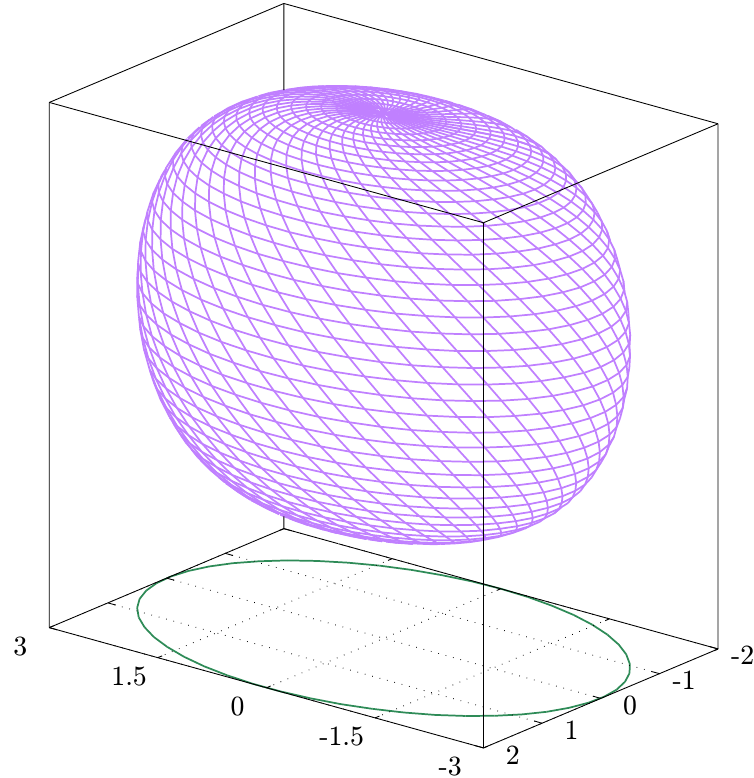}
\caption{The Pansu-Wulff sphere associated to the norm $\norm{\cdot}_a$ with $a=(1,1.5)$. Observe that the projection to the horizontal plane $t=0$ is an ellipse with semiaxes of lengths $2$ and $3$.}
\end{figure}

The unit ball $K_a$ for this norm is an ellipsoid with axes of length $a_1$ and $a_2$. We parameterize clockwise the unit circle of the norm $\norm{\cdot}_K$ by
\[
\ga(s)=(a_1\sin(s),a_2\cos(s)), \quad s\in\rr.
\]
This parameterization is injective of period $2\pi$. The translation of this curve to the origin by the point $-\ga(v)$ is given by the curve
\[
\Lambda_v(u)=\ga(u+v)-\ga(v).
\]
The horizontal lifting of $\Lambda_v$ is given by $(\Lambda_v(u),t_v(u))$, where
\[
t_v(u)=\int_0^u \big[\Lambda_v(\xi)\cdot J(\dot{\Lambda}_{v}(\xi))\big]\,d\xi.
\]
Since
\[
\Lambda_v(\xi)\cdot J(\dot{\Lambda}_{v}(\xi))=(\ga(\xi+v)-\ga(v))\cdot J(\dot{\ga}(\xi+v)),
\]
we get
\[
t_v(u)=a_1a_2\big(u+\sin(v)\cos(u+v)-\cos(v)\sin(u+v)\big).
\]

Hence a parameterization of $\sph_{K_a}$ is given by
\begin{align*}
x(u,v)&=a_1\big(\sin(u+v)-\sin(v)\big)
\\
y(u,v)&=a_2\big(\cos(u+v)-\sin(v)\big),
\\
t(u,v)&=a_1a_2\big(u+\sin(v)\cos(u+v)-\cos(v)\sin(u+v)\big).
\end{align*}
\end{example}

\begin{example}
Given any convex set $K$ containing $0$ in its interior, we can parameterize its Lipschitz boundary $\ptl K$ as
\[
\ga(s)=\big(x(s),y(s)\big)=r(s)\,\big(\sin(s),\cos(s)\big), \quad s\in\rr.
\]
where $r(s)=\rho(\sin(s),\cos(s))$ and $\rho$ is the radial function of $K$ defined as $\rho(u)=\sup\{\la\ge 0: \la u\in K\}$ for any vector $u$ of modulus $1$ in $\rr^2$.

A horizontal lifting of the curve $\ga$ passing through the point $(\ga(0),0)$ can be obtained computing 
\begin{equation*}
t(s)=\int_0^s \ga(\xi)\cdot J(\dot{\ga}(\xi))\,d\xi
=\int_0^s r^2(\xi)\,d\xi,
\end{equation*}
since $J(\dot{\ga}(s))=r(s)\,(\sin(s),\cos(s))+\dot{r}(s)\,(-\cos(s),\sin(s))$. Hence the curve
\[
\Ga(s)=\big(x(s),y(s),t(s)\big)=\big(\ga(s),\int_0^s r^2(\xi)\,d\xi\big)
\]
is a horizontal lifting of the curve $\ga$.

Now we translate all these curves to pass through the origin of $\hh^1$. This way we get the parameterization $\Phi_K$ of $\sph_K$ given by
\[
(u,v)\mapsto \ell_{-\Ga(v)} (\Ga(u+v))
\]
for $(u,v)\in [0,2\pi]^2$. Since
\[
\ell_{(x_0,y_0,t_0)}(x,y,t)=\big(x+x_0,y+y_0,t+t_0+(xy_0-x_0y)\big),
\]
computing the left-translation using the expression for $\Ga$ obtained before we get
\begin{equation}
\label{eq:parametricSK}
\begin{split}
x(u,v)&=r(u+v)\sin(u+v)-r(v)\sin(v),
\\
y(u,v)&=r(u+v)\cos(u+v)-r(v)\cos(v),
\\
t(u,v)&=r(v)r(u+v)\big(\sin(v)\cos(u+v)-\cos(v)\sin(u+v)\big)
\\
&\qquad +\textstyle\int_v^{u+v} r^2(\xi)\,d\xi.
\end{split}
\end{equation}

The parameterization given by equations \eqref{eq:parametricSK} is useful to obtain regularity properties of $\sph_K$. If $\ptl K$ is of class $C^\ell$, $\ell\ge 0$, its radial function $r(s)=(x(s)^2+y(s)^2)^{1/2}$ is of class $C^\ell$ and hence the parameterization $\Phi_K$ is an immersion of class $C^\ell$ for $0<u<2\pi$.
\end{example}

\begin{example}
Let $\ell>1$. We consider the $\ell$-norm in $\rr^2$ defined as
\[
\norm{(x_1,x_2)}_\ell=\big(|x_1|^\ell+|x_2|^\ell\big)^{1/\ell}.
\] 
Denote by $K_\ell$ the unit ball for this $\ell$-norm. We can parametrize the unit circle $\norm{\cdot}_\ell=1$ using \eqref{eq:parametricSK}. In this case
\[
\rho(x,y)=\frac{1}{\big(|x|^\ell+|y|^\ell\big)^{1/\ell}}, \qquad |(x,y)|=1.
\]
By the previous example, the Pansu-Wulff sphere $\sph_{K_\ell}$ is parameterized by equations \eqref{eq:parametricSK}.
\end{example}

\begin{remark}
\label{eq:uniformconvergence}
Assume we have a sequence of of convex sets $(K_i)$ converging in Hausdorff distance to a limit convex set $K$. Then the radial functions $r_{K_i}$ uniformly converge to the radial function $r$ of the limit set $K$. Hence equations \eqref{eq:parametricSK} imply that the Pansu-Wulff spheres $\sph_{K_i}$ converge in Hausdorff distance to a ball bounded by the horizontal liftings of translations of a parameterization $\ga$ of $\ptl K$.

Since $\lim_{\ell\to 1}\norm{\cdot}_\ell=\norm{\cdot}_1$ and $\lim_{\ell\to\infty}\norm{\cdot}_\ell=\norm{\cdot}_\infty$, we can use the previous argument to show that the Pansu-Wulff spheres $\sph_{K_\ell}$ converge to the two spheres $\sph_1$ and $\sph_\infty$. Under these conditions, it is not difficult to check that the corresponding perimeters converge to the limit perimeter.
\end{remark}

\begin{figure}[h]
\includegraphics{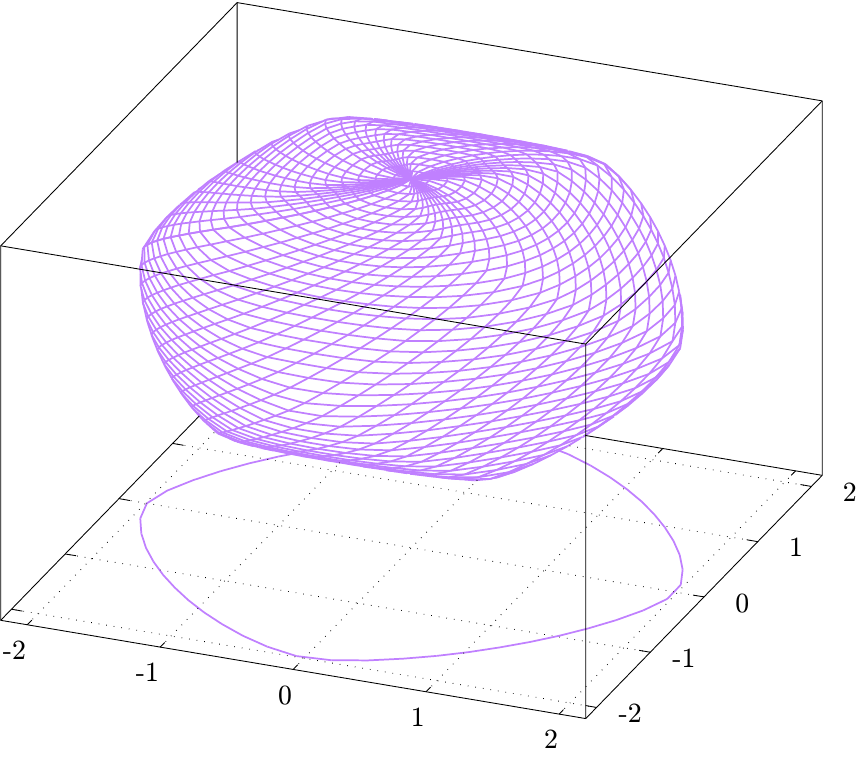}
\caption{The Pansu-Wulff sphere $\sph_{K_\ell}$ for the $\ell$-norm, $\ell=1.5$. The horizontal curve is the projection of the equator to the plane $t=0$. We observe that the Pansu-Wulff sphere projects to the set $\norm{\cdot}_\ell\le 2$ in the $t=0$ plane.}
\end{figure}

\begin{figure}[h]
\includegraphics{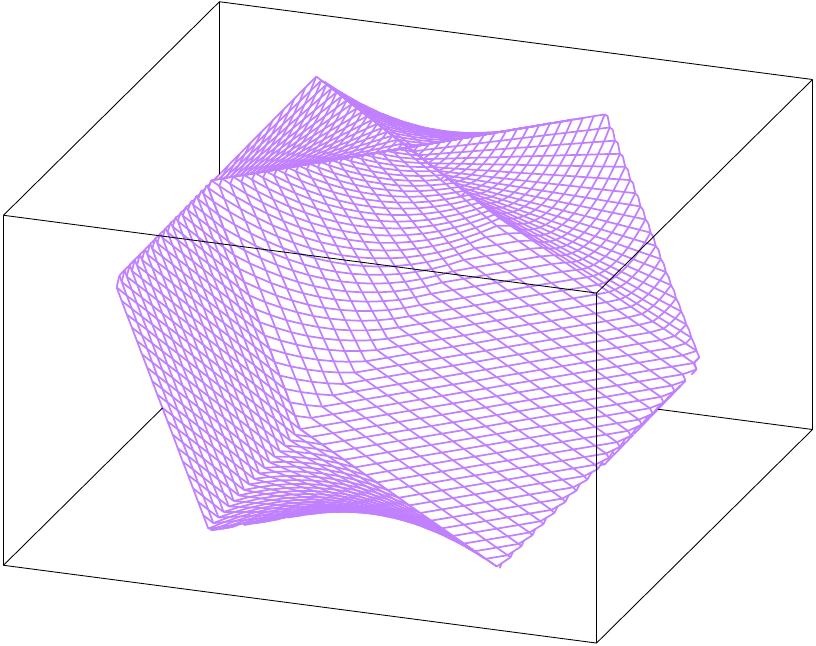}
\caption{The sphere $\sph_1$ obtained as Hausdorff limit of the Pansu-Wulff spheres $\sph_{K_r}$ of the $\ell$-norm when $\ell$ converges to $1$}
\end{figure}

\begin{figure}[h]
\includegraphics{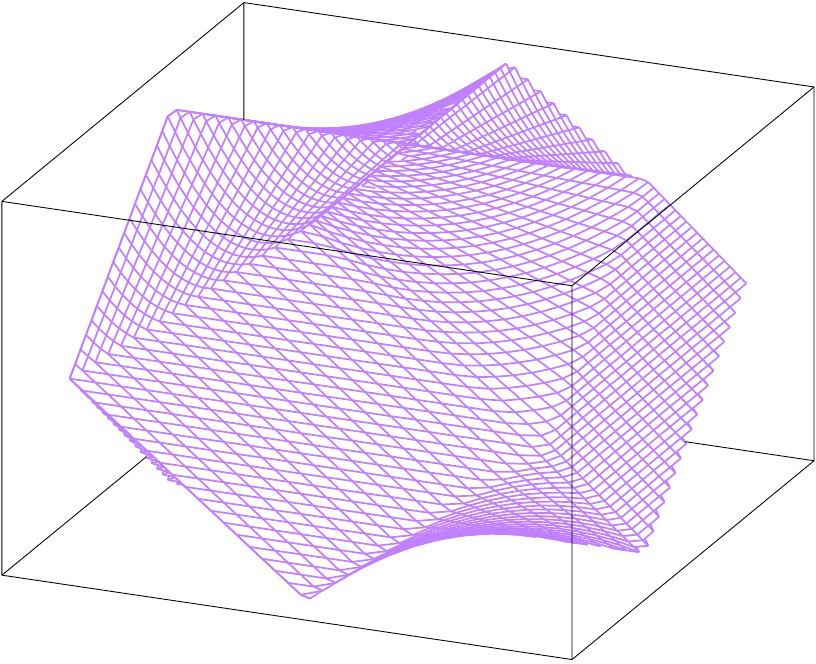}
\caption{The sphere $\sph_\infty$ obtained as Hausdorff limit of the Pansu-Wulff spheres $\sph_{K_r}$ of the $\ell$-norm when $\ell$ converges to $\infty$}
\end{figure}

\begin{example}
Let us consider the equilateral triangle $T$ in the plane $\rr^2$ defined as the convex envelope of the points $a_1=(0,1)$, $a_2=(\sqrt{3}/2,-1/2)$, $a_3=(-\sqrt{3}/1,-1/2)$. We can define a norm $\norm{\cdot}_T$ by the equality
\[
\norm{x}_T=\max\big\{\escpr{x,a_i}:i=1,2,3\big\},\quad x\in\rr^2.
\]
\begin{figure}[h]
\includegraphics{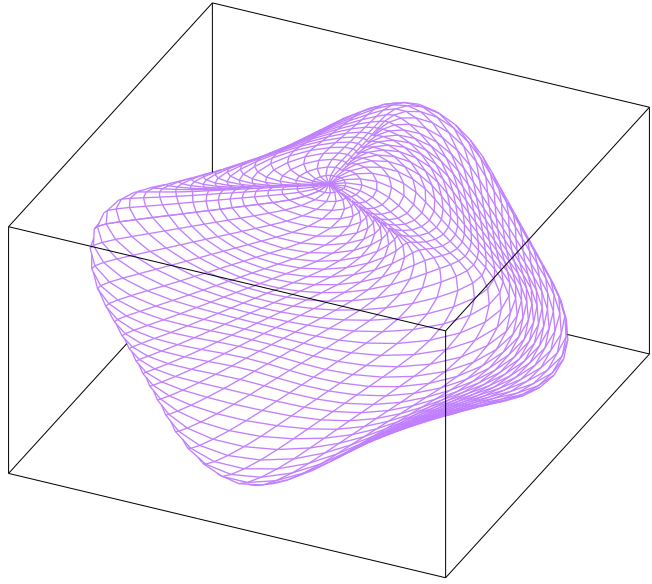}
\caption{The Pansu-Wulff sphere $\sph_{T,\ell}$ for the norm $\norm{\cdot}_{T,\ell}$, with $r=2$.}
\end {figure}
\begin{figure}[h]
\includegraphics{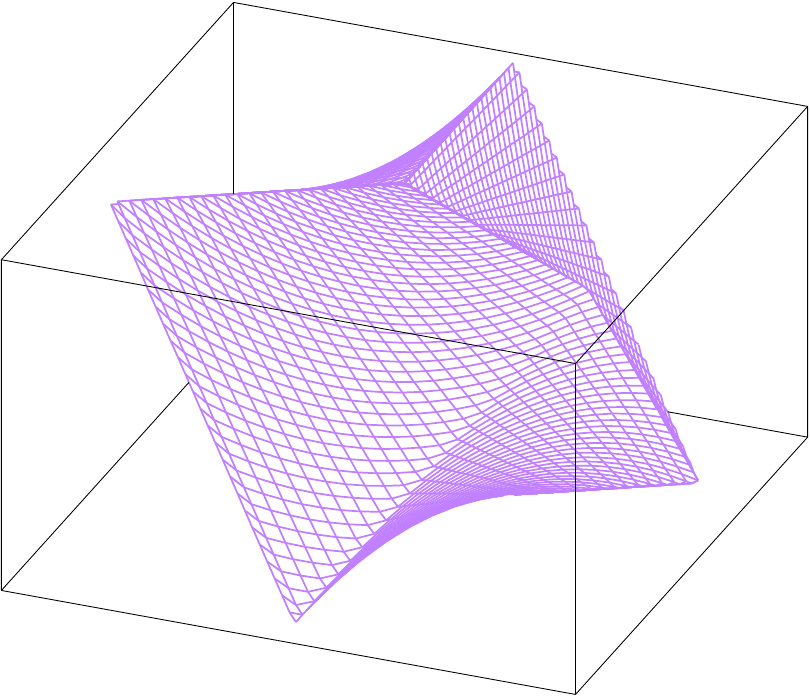}
\caption{The sphere $\sph_T$ obtained as limit of the Pansu-Wulff spheres $\sph_{T,\ell}$ when $r\to\infty$.}
\end {figure}

The unit ball of the norm $\norm{\cdot}_T$ is the triangle $T$. It is neither smooth nor strictly convex. However we may consider the approximating norms
\[
\norm{x}_{T,\ell}=\bigg(\sum_{i=1}^3 \max\{\escpr{x,a_i},0\}^\ell\bigg)^{1/\ell}.
\]
These norms are smooth and strictly convex and $\lim_{\ell\to\infty}\norm{x}_{T,\ell}=\norm{x}_T$. Hence the Pansu-Wulff spheres $\sph_{K_{T,\ell}}$ converge in Hausdorff distance when $\ell\to\infty$ to the sphere $\sph_T$ obtained by traslating $\ptl T$ to touch the origin and lifting the obtained curves as horizontal ones to $\hh^1$.
\end{example}
%

\section{Geometric properties of the Pansu-Wulff spheres}
\label{sec:geomprop}

In this section we show several geometric properties of the Pansu-Wulff spheres $\sph_K$ associated with a left-invariant norm $\norm{\cdot}_K$ . We start by looking at the projection of the sphere to the $t=0$ plane. This projection is determined by the geometry of the convex set $K$. 

Given a convex body $K\subset\rr^n$, the \emph{difference body} of $K$ is the set
\[
\text{D}K=K-K=\{x-y: x,y\in K\}.
\]
The difference body D$K$ is a \emph{centrally symmetric} convex body. This means that $-x\in\text{D}K$ whenever $x\in\text{D}K$. If $h_K$ is the support function of $K$ then the support function of $\text{D}K$ is given by 
\[
h_{\text{D}K}(u)= h_K(u)+h_K(-u), 
\]
see \cite[p.~140]{MR3155183}. This is the width of $K$ in the direction of $u$.

\begin{lemma}
\label{lem:proj}
Let $K\subset\rr^n$ be a convex body with $0\in\intt(K)$. We consider the set
\begin{equation}
\label{eq:K0}
K_0=\bigcup_{p\in\ptl K} (-p+K).
\end{equation}
Then we have
\begin{enumerate}
\item $0\in K_0$.
\item $K_0$ is a convex body.
\item $K_0$ is the difference body of $K$. In particular, $K_0$ is centrally symmetric.
\item If $K$ is centrally symmetric then $K_0=2K$.
\item We have
\[
\bigcup_{p\in\ptl K} (-p+K)=\bigcup_{p\in\ptl K} (-p+\ptl K).
\]
\end{enumerate}
\end{lemma}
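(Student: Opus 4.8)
The plan is to prove (1) and (3) directly, to deduce (2) and (4) from (3) together with elementary properties of Minkowski sums, and to obtain (5) from (3) by a connectedness argument; (5) is the only part that genuinely uses $n\ge 2$, and in our situation $n=2$. Part (1) is immediate: for any $p\in\ptl K$ we have $0=-p+p\in -p+K\subset K_0$. For (3) I would argue by double inclusion. The inclusion $K_0\subset K-K$ is clear since $\ptl K\subset K$, so $-p+K\subset K-K$ for each $p\in\ptl K$. For the reverse inclusion, take $w=q-p$ with $p,q\in K$ and $w\ne 0$ (the case $w=0$ being covered by (1)); using boundedness of $K$, choose the largest $\sg\ge 0$ with $p-\sg w\in K$ and set $p'=p-\sg w\in\ptl K$. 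Convexity of $K$ gives $p+sw\in K$ for all $s\in[-\sg,1]$ (split at $s=0$ into the segments $[p',p]$ and $[p,q]$), and since $1-\sg\in[-\sg,1]$ this yields $p'+w=p+(1-\sg)w\in K$, hence $w=-p'+(p'+w)\in -p'+K\subset K_0$. Thus $K_0=K-K$, which is centrally symmetric because $K-K=-(K-K)$, and equals $\text{D}K$ by definition.

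Parts (2) and (4) follow quickly. By (3), $K_0=K+(-K)$ is a Minkowski sum of convex bodies, hence compact and convex, and $0\in\intt(K)$ gives $K\subset K_0$, so $0\in\intt(K_0)$ and $K_0$ is a convex body: this is (2). If $K=-K$ then $K_0=K+K=2K$ by convexity of $K$, which is (4).

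For (5) the inclusion ``$\supset$'' is trivial, and by (3) the inclusion ``$\subset$'' amounts to showing that every $z\in K-K$ has the form $q-p$ with $p,q\in\ptl K$, i.e. that $\ptl K\cap(\ptl K+z)\ne\emptyset$. This is clear when $z=0$. For $z\ne 0$, put $K'=K+z$; then $K\cap K'\ne\emptyset$ (since $z\in K-K$), and boundedness of $K$ prevents either of $K,K'$ from containing the other (iterating the translation would escape any bounded set). If $\intt(K)\cap\intt(K')=\emptyset$, a separating hyperplane $H$ contains $K\cap K'$ and supports both bodies, so $\emptyset\ne K\cap K'\subset\ptl K\cap\ptl K'$. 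Otherwise $L:=K\cap K'$ is a convex body; its boundary $\ptl L$ is connected (homeomorphic to $S^{n-1}$, $n\ge 2$), and $\ptl L$ is covered by the two closed sets $\ptl L\cap\ptl K$ and $\ptl L\cap\ptl K'$, since a point of $\ptl L$ lying in $\intt(K)\cap\intt(K')$ would lie in $\intt(L)$. Both sets are nonempty: if, say, $\ptl L\cap\ptl K=\emptyset$, then $\ptl L\subset\ptl K'$, which forces $L=K'$ and hence $K'\subset K$, a contradiction. Connectedness of $\ptl L$ then produces a point of $\ptl L\cap\ptl K\cap\ptl K'$, and translating back gives $\ptl K\cap(\ptl K+z)\ne\emptyset$, as required.

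The topological facts used above (interiors of intersections of convex sets, connectedness of the boundary of a convex body, the separating hyperplane theorem) are standard. The two places I expect to need the most care are the reverse inclusion in (3) — the ``push $p$ to the boundary in the direction $-w$'' construction — and, in (5), the claim that two intersecting convex bodies, neither of which contains the other, have intersecting boundaries; there the hypothesis $n\ge 2$ is genuinely needed, since already $K=[a,b]\subset\rr$ shows that (5) fails in dimension one (the right-hand side is then a three-point set).
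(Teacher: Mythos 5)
Your argument is correct, but it takes a genuinely different route from the paper's. For part (3) the paper computes the support function of $K_0$: it shows $h_{K_0}(v)=h_K(v)+h_K(-v)=h_{\text{D}K}(v)$ by locating, for each unit direction $v$, a maximizing boundary point $q\in -p+\ptl K$ and then proving $h_K(-v)=\escpr{p,-v}$; since support functions only determine \emph{convex} bodies, this argument leans on part (2), which the paper establishes first by a direct convexity computation (pushing $p_\la$ to $\ptl K$ along the segment $[q_\la,p_\la]$). You instead prove $K_0=K-K$ by a direct double inclusion --- your ``push $p$ to the boundary in the direction $-w$'' step is essentially the same elementary mechanism the paper deploys inside its convexity proof --- and then harvest (2) and (4) as immediate consequences of standard facts about Minkowski sums of convex bodies. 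This reverses the logical order of (2) and (3), avoids support functions entirely, and is more self-contained. For part (5) the paper invokes its auxiliary Lemma~\ref{lem:segmentconvex} (every difference $b-a$ of points of $K$ is realized as a difference of boundary points), proved by an intermediate-value argument on the widths of parallel slices of $K$; you prove the equivalent statement $\ptl K\cap(\ptl K+z)\neq\emptyset$ by a connectedness argument on $\ptl\big(K\cap(K+z)\big)$, split into the cases of disjoint and overlapping interiors. Both are valid; the slice argument is more constructive, while your topological argument is shorter and makes transparent exactly where $n\ge 2$ enters --- your observation that (5) fails for $K=[a,b]\subset\rr$ is correct, and the same dimensional restriction is implicitly present in the paper, whose proof of Lemma~\ref{lem:segmentconvex} picks a point $c\in K$ off the line $ab$ and hence also requires $n\ge 2$.
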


\begin{proof}
To prove 1 take into account that $0=-p+p\in -p+K\subset K_0$ for any $p\in\ptl K$.

To prove 2, we take $p_1,p_2\in\ptl K$, $q_1,q_2\in K$ and $\la\in [0,1]$. Then
\[
\la(-p_1+q_1)+(1-\la)(-p_2+q_2)=-p_\la+q_\la,
\]
where
\[
p_\la=\la p_1+(1-\la)p_2,\qquad q_\la=\la q_1+(1-\la)q_2.
\]

If $p_\la=q_\la$ then $-p_\la+q_\la=0\in K_0$. Otherwise the segment $[p_\la,q_\la]$ is not trivial and contained in $K$. Let $\mu_0\ge 1$ such that $q_\la+\mu_0(p_\la-q_\la)\in\ptl K$. The value $\mu_0$ is computed as the supremum of the set $\{\mu\ge 0: q_\la+\mu(p_\la-q_\la)\in K\}$. We have
\[
-p_\la+q_\la=-(q_\la+\mu_0(p_\la-q_\la))+(q_\la+(\mu_0-1)(p_\la-q_\la)).
\]
The point $q_\la+\mu_0(p_\la-q_\la)$ belongs to $\ptl K$ by the choice of $\mu_0$ and the point $q_\la+(\mu_0-1)(p_\la-q_\la)$ belongs to $K$ since $0\le \mu_0-1\le \mu_0$. Hence $-p_\la+q_\la\in K_0$ and so $K_0$ is convex.

To prove 3, we take a vector $v$ with $\escpr{v,v}=1$. Let $q\in\ptl K_0$ such that
\begin{equation}
\label{eq:defqh}
h_{K_0}(v)=\escpr{q,v}\ge \escpr{z,v}\quad \forall\ z\in K_0.
\end{equation}
By the definition of $K_0$, there exists $p\in\ptl K$ such that $q\in -p+K$. We claim that $q\in -p+\ptl K$: otherwise $p+q\in\intt(K)$ and there exists $\eps>0$ such that $p+q+\eps v\in K$. So we have
\[
\escpr{-p+(p+q+\eps v),v}=\escpr{q+\eps v,v}=\escpr{q,v}+\eps>\escpr{q,v}.
\]
Since $p+q+\eps v\in K$ this yields a contradiction. Hence $q\in -p+\ptl K=\ptl (-p+K)$ for some $p\in\ptl K$.

Since $-p+K\subset K_0$ and $q$ is a boundary point for both sets, we deduce that $v$ is a normal vector to $-p+K$ at $q$. As $h_{-p+K}(v)=-\escpr{p,v}+h_K(v)$,we have
\[
h_{K_0}(v)=h_{-p+K}(v)=h_K(v)+\escpr{p,-v}.
\]

It remains to prove that $h_{K}(-v)=\escpr{p,-v}$. Assume by contradiction that $\escpr{p,-v}<h_K(-v)=\escpr{x,-v}$ for some $x\in\ptl K$. Then we have
\[
\escpr{-x+(p+q),v}=\escpr{-x+p,v}+\escpr{q,v}>\escpr{q,v},
\]
that cannot hold by \eqref{eq:defqh} since $p+q\in K$ and so $-x+p+q\in -x+K\subset K_0$.

To prove 4, we note that $h_K(v)=h_K(-v)$ when $K$ is centrally symmmetric and, by 3, $h_{K_0}=2h_K$. Hence $K=2K_0$.

Finally, to prove 5 we notice that $\bigcup_{p\in\ptl K} (-p+K)\supset \bigcup_{p\in\ptl K} (-p+\ptl K)$. To prove the remaining inclusion we take $p\in\ptl K$ and $u\in K$ such that $q=-p+u\in \bigcup_{p\in\ptl K} (-p+K)$. Then Lemma~\ref{lem:segmentconvex} allows us to find $p_1,u_1\in \ptl K$ such that $q=-p+u=-p_1+u_1$. Hence $q\in \bigcup_{p\in\ptl K}  (-p+\ptl K)$.
\end{proof}

\begin{lemma}
\label{lem:segmentconvex}
Let $K\subset\rr$ be a convex body, and $a,b\in K$. Then there exist $p,q\in\ptl K$ such that $b-a=q-p$. 
\end{lemma}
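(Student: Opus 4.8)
The plan is to recast the conclusion in a translation-invariant form. Put $v:=b-a$. It suffices to find $p\in\ptl K$ with $p+v\in\ptl K$, that is, a point $p\in\ptl K\cap\ptl(K-v)$: then $q:=p+v$ satisfies $p,q\in\ptl K$ and $q-p=v=b-a$. If $v=0$ any point of $\ptl K$ works (and $\ptl K\ne\emptyset$ since $K$ is a body), so assume $v\ne 0$. Set $A:=K$ and $B:=K-v$, both convex bodies. Since $a\in K$ and $b=a+v\in K$ we have $a\in A\cap B$, so $A\cap B\ne\emptyset$; and since $K$ is bounded with $v\ne0$, neither $K\subseteq K-v$ nor $K-v\subseteq K$ can hold (either would give $K+v\subseteq K$, hence $K$ unbounded). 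Thus the lemma reduces to the general fact that two convex bodies $A,B\subset\rr^n$, $n\ge 2$ (here $n=2$), with $A\cap B\ne\emptyset$, $A\not\subseteq B$ and $B\not\subseteq A$ must have a common boundary point.

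I would prove this fact by contradiction: suppose $\ptl A\cap\ptl B=\emptyset$. For a convex body in $\rr^n$ with $n\ge2$, the boundary is connected, being homeomorphic to $\sph^{n-1}$ through the radial projection from an interior point. Since $\ptl A$ misses $\ptl B$, the connected set $\ptl A$ is contained in the union of the disjoint open sets $\intt(B)$ and $\rr^n\setminus B$, hence lies entirely in one of them. If $\ptl A\subseteq\rr^n\setminus B$, then $B\cap\ptl A=\emptyset$, so the connected set $B$ lies in $\intt(A)\cup(\rr^n\setminus A)$, forcing either $B\subseteq\intt(A)$ (contradicting $B\not\subseteq A$) or $B\subseteq\rr^n\setminus A$ (contradicting $A\cap B\ne\emptyset$). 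Hence necessarily $\ptl A\subseteq\intt(B)$.

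The remaining step, which I expect to be the heart of the matter, is to exclude $\ptl A\subseteq\intt(B)$; I would do this by showing it would force $A\subseteq B$, contradicting $A\not\subseteq B$. Indeed, assume $y\in A\setminus B$. Since $\ptl A\subseteq\intt(B)\subseteq B$ the point $y$ is not on $\ptl A$, so $y\in\intt(A)$. Strictly separating $y$ from the compact convex set $B$ gives a unit vector $u$ with $\escpr{y,u}>\escpr{x,u}$ for all $x\in B$, in particular for all $x\in\ptl A$; therefore $\escpr{y,u}\ge\max_{x\in\ptl A}\escpr{x,u}=\max_{x\in A}\escpr{x,u}$, the last equality because a linear functional on a convex body attains its maximum on the boundary. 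On the other hand $y\in\intt(A)$ yields $y+\eps u\in A$ for small $\eps>0$, so $\max_{x\in A}\escpr{x,u}\ge\escpr{y+\eps u,u}>\escpr{y,u}$, a contradiction. Hence $A\subseteq B$, which is absurd; this contradiction establishes $\ptl A\cap\ptl B\ne\emptyset$. Picking $p$ in this intersection and setting $q=p+v$ then completes the proof. The only auxiliary facts used --- a linear functional on a convex body is not maximized at an interior point, a bounded set is not contained in a translate of itself by a nonzero vector, and the connectedness of $\ptl A$ --- are all elementary.
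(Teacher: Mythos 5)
Your proof is correct, and it takes a genuinely different route from the paper's. You reformulate the claim as the non-emptiness of $\ptl K\cap\ptl(K-v)$ for $v=b-a$ and deduce it from a purely topological fact: two convex bodies that meet, neither containing the other, must have a common boundary point. Your argument for that fact (connectedness of $\ptl A\cong\sph^{n-1}$ and of $B$, plus a separation argument to rule out $\ptl A\subset\intt(B)$) is sound; the only blemish is the parenthetical ``either would give $K+v\subseteq K$'' --- the inclusion $K-v\subseteq K$ actually gives $K-nv\subseteq K$, i.e.\ the same unboundedness with $-v$ in place of $v$ --- which is trivially repaired. The paper instead argues inside a planar section $W=K\cap P$ with $P\supset\{a,b\}$: it slices $W$ by lines parallel to $b-a$, shows the slice length $x\mapsto|W(x)|$ is continuous (via the Kuratowski criterion for Hausdorff convergence of the slices) and strictly concave, and locates the desired segment either as a full slice of length $\|b-a\|$ or as a sub-segment of a boundary slice at an endpoint of the projection interval. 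Your approach buys a cleaner, dimension-free proof that avoids the continuity-of-slices machinery and any smoothness or strict-convexity input; the paper's approach is more constructive in that it exhibits concretely where the translated segment sits in $K$. Both correctly require the ambient dimension to be at least $2$ (the statement's ``$K\subset\rr$'' is a typo --- in dimension one the lemma fails, e.g.\ for $K=[0,1]$, $a=0$, $b=\tfrac12$), and you are explicit about this where the paper is only implicit.
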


\begin{proof}
	If $a=b$ or $a,b\in\ptl K$ the result follows trivially. Henceforth we assume $a\neq b$ and that at least $a$ or $b$ is an interior point of $K$. We pick a point $c\in K$ out of the line $ab$. Let $P$ be the plane containing $a,b,c$ and $W=K\cap P$. The set $W$ is a convex body in $P$ and the boundary of $W$ in $P$ is contained in $\ptl K$. We take orthogonal coordinates $(x,y)$ in $P$ so that $(b-a)$ points into the positive direction of the $y$-axis. Let $I$ be the orthogonal projection in $P$ of $W$ onto the $x$-axis.
	
	Given $x\in I$, define the set $W(x)$ as $\{y\in\rr: (x,y)\in W\}$. A simple application~of Kuratowski criterion, see Theorem~1.8.8 in \cite{MR3155183}, implies that $W(x_i)$ converges to $W(x)$ in Hausdorff distance when $x_i$ converges to $x$. Hence the function $x\in I\mapsto |W(x)|$ is continuous and takes a value larger than $||b-a||$ at the projection of $a,b$ over the $x$-axis. If $|W(x)|=||b-a||$ for some $x\in I$, we take as $p,q$ the extreme points of the interval  $W(x)$ chosen so that $q-p=b-a$ to conclude the proof. Otherwise, we would have $|W(x_0)|> ||b-a||$ at an extreme point $x_0$ of $I$.  We may choose two points $p,q\in W(x_0)$ such that $|[p,q]|=||b-a||$ and $q-p=b-a$. Since $W(x_0)$ is contained in the boundary of $W$ in $P$, it is contained in $\ptl K$ and so $p,q\in\ptl K$.
\end{proof}

Now we refine the results in Lemma~\ref{lem:proj} when $K$ is strictly convex and has boundary of class $C^\ell_+$, $\ell\ge 2$. We say that a convex body $K$ is of class $C^\ell_+$, $\ell\ge 1$, when $\ptl K$ is of class $C^\ell$ and its normal map $N_K:\ptl K\to\sph^1$ is a diffeomorphism of class $C^{\ell-1}$.

\begin{corollary}
Let $K\subset\rr^2$ be a convex body containing $0$ as interior point. Then
\begin{enumerate}
\item If $K\subset\rr^2$ is strictly convex, then $K_0$ is strictly convex.
\item If $K$ is of class $C^\ell_+$, $\ell\ge 2$, then $K_0$ is of class $C^\ell_+$.
\end{enumerate}
\end{corollary}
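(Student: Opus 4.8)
The plan is to deduce both statements from the characterization $K_0 = \mathrm{D}K = K + (-K)$ established in Lemma~\ref{lem:proj}(3), using the standard behaviour of strict convexity and of the $C^\ell_+$ condition under Minkowski addition. For (1), suppose $K$ is strictly convex; I want to show $\mathrm{D}K$ is strictly convex, i.e. its boundary contains no nontrivial segment. Equivalently, in terms of support functions, for each unit vector $v$ the supporting line of $\mathrm{D}K$ in direction $v$ touches $\mathrm{D}K$ in a single point. Since $h_{\mathrm{D}K}(v) = h_K(v) + h_K(-v)$ and the face of $\mathrm{D}K$ in direction $v$ is the Minkowski sum of the face of $K$ in direction $v$ and the face of $-K$ in direction $v$ (which is $-1$ times the face of $K$ in direction $-v$), strict convexity of $K$ makes each of these two faces a single point, hence their sum is a single point. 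This already gives (1); alternatively one can argue directly that if $q_1, q_2$ are distinct boundary points of $\mathrm{D}K$ on a common supporting line with outer normal $v$, writing $q_i = x_i - y_i$ with $x_i \in \partial K$ realizing $h_K(v)$ and $y_i \in \partial K$ realizing $h_K(-v)$, strict convexity forces $x_1 = x_2$ and $y_1 = y_2$, so $q_1 = q_2$.

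For (2), assume $K$ is of class $C^\ell_+$ with $\ell \ge 2$. I would again pass through support functions: $K$ is $C^\ell_+$ precisely when its support function $h_K$, restricted to $\sph^1$ (equivalently, the $1$-homogeneous extension restricted to $\rr^2 \setminus \{0\}$), is of class $C^\ell$ and the map $v \mapsto \nabla h_K(v)$ is a $C^{\ell-1}$ diffeomorphism onto $\partial K$ — this is the reverse direction of the correspondence already invoked in the Preliminaries (Corollary~1.7.3 in \cite{MR3155183}), where it is used to conclude that $h$ is $C^\ell$ when $K$ is $C^\ell_+$. Now $h_{\mathrm{D}K}(v) = h_K(v) + h_K(-v)$ is manifestly $C^\ell$ on $\rr^2 \setminus \{0\}$ as a sum of two $C^\ell$ functions, and its gradient is $v \mapsto \nabla h_K(v) - \nabla h_K(-v)$, whose image is $\partial(\mathrm{D}K)$ by part (3) of Lemma~\ref{lem:proj}. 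It remains to check that this Gauss map is a $C^{\ell-1}$ diffeomorphism, or equivalently that $\partial(\mathrm{D}K)$ has everywhere strictly positive geodesic curvature. The second derivative (the "radii of curvature" operator) of $h_{\mathrm{D}K}$ on $\sph^1$ is the sum of the corresponding operators for $h_K(v)$ and for $h_K(-v)$; since $K$ is $C^2_+$ both summands are strictly positive, so the sum is, which is exactly the condition that $\mathrm{D}K$ be $C^\ell_+$.

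The cleanest packaging is probably to cite the standard fact (e.g. from \cite{MR3155183}) that the class of $C^\ell_+$ convex bodies is closed under Minkowski addition, and likewise that a Minkowski sum of convex bodies is strictly convex if one of the summands is; then both (1) and (2) follow immediately from $K_0 = K + (-K)$ together with the trivial observations that $-K$ is $C^\ell_+$ (resp. strictly convex) whenever $K$ is. The only mild subtlety — and the step I would be most careful about — is keeping the support-function bookkeeping straight at the origin: $h_{\mathrm{D}K}$ is $1$-homogeneous and only $C^\ell$ away from $0$, and the curvature computation should be carried out on $\sph^1$ via the operator $h + \nabla^2_{\sph^1} h$ (or its $2\times 2$ analogue $(\partial^2_{ij} h)$ restricted to the sphere), so that "positive definite" there corresponds correctly to positive geodesic curvature of the boundary; once that dictionary is fixed, the additivity argument is routine.
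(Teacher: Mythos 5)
Your proof is correct, but it diverges from the paper's at both steps, so a comparison is worthwhile. For (1) the paper does not use support functions at all: it takes two boundary points $x_1-x_2$, $y_1-y_2$ of $K_0$ (with all of $x_i,y_i\in\ptl K$, by part 5 of Lemma~\ref{lem:proj}), rewrites their convex combination as $x_\la-y_\la$ with $x_\la=\la x_1+(1-\la)y_1$, $y_\la=\la x_2+(1-\la)y_2$, and observes that strict convexity forces one of $x_\la,y_\la$ into $\intt(K)$, whence $x_\la-y_\la\in\intt(K_0)$. Your face-decomposition argument (the face of $K-K$ in direction $v$ is $F(K,v)-F(K,-v)$, a single point when $K$ is strictly convex) reaches the same conclusion and is, if anything, more transparent. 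For (2) both you and the paper start from $h_{K_0}(u)=h_K(u)+h_K(-u)\in C^\ell$, but the paper then passes to the polar body $K_0^*$, identifies its Gauss map explicitly and dualizes back, whereas you verify positivity of the curvature directly via additivity of the reverse Weingarten operator $h+\nabla^2_{\sph^1}h$ under Minkowski addition; your route is the more standard one (essentially Schneider's characterization of $C^\ell_+$ bodies by their support functions, \cite{MR3155183}) and avoids the polarity detour. One caveat: the ``standard fact'' you propose for the cleanest packaging of (1) --- that a Minkowski sum is strictly convex as soon as \emph{one} summand is --- is false: since $F(A+B,v)=F(A,v)+F(B,v)$, a disk plus a square still has segments in its boundary. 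What is true, and what your actual argument uses, is that the sum is strictly convex when \emph{every} summand is, which holds here because $-K$ inherits strict convexity from $K$; so the main argument stands and only that side remark should be dropped.
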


\begin{proof}
To prove that $K_0$ is strictly convex, we take two different points $x_1-x_2, y_1-y_2\in\ptl K_0$, with $x_i,y_i\in K$, $i=1,2$. Then the four points belong to the boundary of $K$. For any $\la\in (0,1)$, we write the convex combination $\la(x_1-x_2)+(1-\la)(y_1-y_2)$ as
\[
x_\la-y_\la=(\la x_1+(1-\la)y_1)-(\la x_2+(1-\la)y_2).
\]
Since $x_1\neq y_1$ or $x_2\neq y_2$, the strict convexity of $K$ implies that $x_\la$ or $y_\la$ is an interior point of $K$. Then $x_\la-y_\la$ is an interior point of $K_0$. Since $\la\in (0,1)$ and the boundary points are arbitrary, the set $K_0$ is strictly convex.

To prove the boundary regularity of $K_0$ we follow Schneider's arguments \cite[p.~115]{MR3155183} and observe that the support function $h_K$ of $K$ is defined, when $u\neq 0$, by
\[
h_K(u)=\escpr{u,N_K^{-1}(u)},
\]
where $N_K:\ptl K\to\sph^1$ is the Gauss map, a diffeomorphism of class $C^{\ell-1}$ since $K$ is of class $C^\ell_+$. By Corollary~7.1.3 in \cite{MR3155183} 
\begin{equation}
\label{eq:gradhK}
\nabla h_K(u)=N_K^{-1}\bigg(\frac{u}{|u|}\bigg),
\end{equation}
and so $h_K$ is of class $C^\ell$. This implies that the support function of $K_0$, $h_{K_0}(u)=h_K(u)+h_K(-u)$, is of class $C^\ell$. Hence the polar body $K_0^*$ of $K_0$ has boundary of class $C^\ell$.  The Gauss map $N_{K_0^*}$ of $K_0^*$ can be described as
\[
N_{K_0^*}:\rho(K_0^*,u)u\mapsto \frac{N_K^{-1}(u)}{|N_K^{-1}(u)|},
\]
where $\rho(K_0^*,\cdot)=h_K^{-1}(\cdot)$ is the radial function of $K_0^*$, of class $C^{\ell-1}$. Hence $N_{K_0^*}$ is a diffeomorphism of class $C^{\ell-1}$ and so $K_0^*$ is of class $C^\ell_+$. Now the support function of $K_0^*$ is of class $C^\ell_+$ and we reason in the same way interchanging the roles of $K_0^*$ and $K_0$ to get the result.
\end{proof} 

\begin{remark}
If $K\subset\rr^2$ is a centrally symmetric convex body, for any $p\in\ptl K$, the line passing through $p$ and $-p$ divides $K$ into two regions of equal area. Hence the line through $0$ and $-2p$ divides $-p+K$ into two regions of the same area. When $p$ moves along $\ptl K$, the point $-2p$ parametrizes $\ptl (2K)$.
\end{remark}

Let $K$ be a convex set of class $C^\ell_+$, $\ell\ge 2$, $C=\ptl K$ and $\ga:\rr\to \rr^2$ an $L$-periodic  clockwise arc-length parameterization of $C$, with $L=\text{length}(C)$. The set $K_0=\bigcup_{p\in C} (-p+K)$ has smooth boundary $C_0$. For any $v\in\rr$, we denote by $\ga_v(u)=\ga(u+v)-\ga(v)$. Let $\Ga_v=(\ga_v,t_v)$ be the horizontal lifting of $\ga_v$ with $t_v(0)=0$. If we call $\Om_v(u)$ the planar region delimited by the segment $[0,\ga_v(u)]$ and the restriction of $\ga_v$ to $[0,u]$ then a standard application of the Divergence Theorem to the vector field $x\tfrac{\ptl}{\ptl x}+y\tfrac{\ptl}{\ptl y}$ implies
\[
t_v(u)=\int_0^u \escpr{\ga_v,J(\dot{\ga}_v)}(\xi)\,d\xi=2\,|\Om_v(u)|.
\]

Our next goal is to prove that $\sph_K$ is the union of two graphs defined in $K_0$ of class $C^2$ and coinciding on $\ptl K_0$.

\begin{theorem}\label{thm:graph}
Let $K\subset\rr^2$ be a convex body with $C^\ell_+$ boundary, $\ell\ge 2$. Then
\begin{enumerate}
\item $\sph_K$ is of class $C^\ell$ outside the poles.
\item There exist two functions $g_1,g_2:K_0\to\rr$ of class $C^\ell$ on $\intt(K_0)$ such that
\[
\mathbb{S}_K=\text{\emph{graph}}(g_1)\cup \text{\emph{graph}}(g_2),
\]
with $g_1> g_2$ on $\intt(K_0)$ and  $g_1=g_2$ on $C_0$. This imples that $\sph_K$ is an embedded surface.

Moreover, if $K$ is centrally symmetric then $g_1+g_2=2|K|$ and hence $\sph_K$ is symmetric with respect to the horizontal Euclidean plane $t=|K|$.
\end{enumerate}
\end{theorem}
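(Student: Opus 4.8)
The plan is to analyse $\sph_K$ through the vertical projection $\pi(x,y,t)=(x,y)$. By construction $\pi(\sph_K)=\bigcup_{v\in[0,L)}\ga_v([0,L])=\bigcup_{p\in\ptl K}(-p+\ptl K)$, which equals $K_0=\text{D}K$ by parts (3) and (5) of Lemma~\ref{lem:proj}; so both assertions reduce to describing the fibres of $\pi|_{\sph_K}$ together with the regularity of the parametrisation $\Phi_K$ coming from \eqref{eq:parametricSK}.

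First I would count the fibres. Fix $w\neq 0$. A point of $\sph_K$ over $w$ is $\Ga_v(u)$ with $\ga(u+v)-\ga(v)=w$, which forces $\ga(v)$ and $\ga(v)+w$ to lie on $\ptl K$ and then determines $u,v$ modulo $L$; so such points are in bijection with the chords of $K$ of direction $w$ and length $\norm{w}$. Parametrising the chords parallel to $w$ by their level $s$ (the common value of $\escpr{\cdot,J(w)}$ along the chord) and writing $L(s)$ for the chord length, strict convexity of $K$ makes $L$ strictly concave, with a single maximum $L^\ast$ at some $s^\ast$; since $K_0=\text{D}K$, one has $w\in\intt(K_0)$ iff $\norm{w}<L^\ast$ and $w\in C_0$ iff $\norm{w}=L^\ast$. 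Hence there are exactly two levels $s_1<s^\ast<s_2$ with $L(s_i)=\norm{w}$ when $w\in\intt(K_0)$, and exactly one when $w\in C_0$. Keeping track of the clockwise orientation of $\ga$, the horizontal lift through $0$ of the translated chord at level $s$ ends at height $2\,|\Om_v(u)|$, where $\Om_v(u)$ is congruent to the component of $K$ cut off on the side $\{\escpr{\cdot,J(w)}>s\}$ of that chord. Therefore over $w\in\intt(K_0)$ the two heights are $g_i(w)=2\,|K\cap\{\escpr{\cdot,J(w)}>s_i(w)\}|$, $i=1,2$, and $s_1<s_2$ gives $g_1(w)>g_2(w)$, while over $w\in C_0$ the single level $s^\ast$ gives $g_1=g_2$. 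The fibre over $w=0$ consists of the two poles $(0,0,0)$ and $(0,0,2|K|)$, so $g_2(0)=0$ and $g_1(0)=2|K|$; letting $s_1,s_2\to s^\ast$ (resp.\ $s_1,s_2$ tend to the two extreme levels of $K$) shows $g_1,g_2$ extend continuously to $C_0$ (resp.\ to $0$). This establishes $\sph_K=\text{graph}(g_1)\cup\text{graph}(g_2)$ with the stated inequalities, hence that $\sph_K$ is an embedded topological sphere.

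For the regularity I would use the parametrisation $\Phi_K$. As noted in Section~\ref{sec:examples}, $\Phi_K$ is a $C^\ell$ immersion on $\{0<u<L\}$, and the fibre count shows it is injective there; since $\Phi_K$ extends to a continuous surjection of a $2$--sphere onto the compact set $\sph_K$ collapsing $\{u=0\}$ and $\{u=L\}$ to the two poles, it restricts to an embedding of the open cylinder onto $\sph_K$ minus the poles, which is assertion (1). A short computation shows the Jacobian of $\pi\circ\Phi_K$ degenerates exactly where $\dot\ga(v)$ and $\dot\ga(u+v)$ are parallel, i.e.\ at $u=0$ or where $\ga(v)$ and $\ga(u+v)$ have opposite tangent directions, the latter locus being precisely the preimage of $C_0$. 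Thus over $\intt(K_0)\setminus\{0\}$ the map $\pi\circ\Phi_K$ is a local $C^\ell$ diffeomorphism, and composing a local inverse with the $t$-component of $\Phi_K$ gives $g_1,g_2\in C^\ell$ there; the behaviour at $w=0$ is the regularity of $\sph_K$ at the poles, which is only $C^2$ in general, already for $K=D$ as recalled in the Introduction, and follows from the standard analysis around a characteristic point.

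Finally, when $K=-K$ the function $L$ is even, so $s^\ast=0$ and $s_1=-s_2$; the map $x\mapsto-x$ carries $K\cap\{\escpr{\cdot,J(w)}>s_2\}$ onto $K\cap\{\escpr{\cdot,J(w)}<s_1\}$, of area $|K|-|K\cap\{\escpr{\cdot,J(w)}>s_1\}|$, whence $|\Om_{s_1}|+|\Om_{s_2}|=|K|$ and $g_1(w)+g_2(w)=2|K|$; equivalently $\sph_K$ is invariant under $(x,y,t)\mapsto(-x,-y,2|K|-t)$, so symmetric about the plane $t=|K|$. I expect the main obstacle to be the orientation bookkeeping that identifies the lift height $t_v(u)=2|\Om_v(u)|$ with the area of the \emph{correct} component of $K$, and hence produces the strict ordering $g_1>g_2$ and the coincidence on $C_0$; the regularity at the poles, though only $C^2$, is by now routine.
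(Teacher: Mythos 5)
Your proposal is correct and follows essentially the same route as the paper: identify the fibres of the vertical projection over $K_0$ with chords of $K$ in a fixed direction, use strict concavity of the chord-length function to get exactly two chords over $\intt(K_0)$ and one over $C_0$, read off the two heights as twice the areas of the two arcs cut off (which also yields $g_1+g_2=2|K|$ in the symmetric case), and obtain $C^\ell$ regularity of $g_1,g_2$ from the $C^\ell$ parametrization together with the implicit function theorem. Your explicit computation that the Jacobian of the projected parametrization degenerates exactly at $u=0$ and over $C_0$ is a slightly more detailed version of the paper's one-line appeal to the implicit function theorem, but it is the same argument.
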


\begin{definition}
The domain delimited by the embedded sphere $\sph_K$ is a ball $\bb_K$ that we call the \emph{Pansu-Wulff shape} of $\norm{\cdot}_K$.
\end{definition}

\begin{proof}[Proof of Theorem~\ref{thm:graph}]
That $\sph_K$ is $C^\ell$ outside the singular set follows from the parameterization \eqref{eq:parametricSK} since the function $r(s)$ is of class $C^\ell$. This proves 1.

We break the proof of 2 into several steps. Recall that $C=\ptl K$ and $C_0=\ptl K_0$.

\emph{Step 1}. Given $x\in K_0\backslash\{0\}$, we claim that $x\in C-p$ for some $p\in C$ if and only if the segment $[p,p+x]$ is contained in $K$ and $p,p+x\in C$. This means that the number of curves $C-p$, with $p\in C$, passing through $x\neq 0$ coincides with the number of segments parallel to $x$ of length $|x|$ and boundary points in $C$. This step is trivial.

\emph{Step 2}. Given $x\in K_0\setminus\{0\}$, the number of segments $[p,p+x]$ contained in $K$ with $p,p+x\in C$ is either $1$ or $2$. The first case corresponds to maximal length and happens if and only if $x$ belongs to $C_0$.

To prove this we consider $v=x/|x|$ and a line $L$ orthogonal to $v$. For any $z$ in $L$ we consider the intersection $I_z=L_z\cap K$, where $L_z$ is the line passing through $z$ with direction $v$. The set $J=\{z\in L: I_z\neq\emptyset\}$ is a non-trivial segment in $L$. The strict convexity of $K$ implies that the map $F:J\to\rr$ defined by $F(z)=|I_z|$ is strictly concave.  Since $F$ vanishes at the extreme points of $J$, it has just one maximum point $z_0\in\intt(J)$ and each value in the interval $(0,F(z_0))$ is taken by two different points in $J$. The observation that there is a bijective correspondence between the segments $[p,p+x]$ contained in $K$ with $p,p+x\in C$ and the points $z\in L$ with $F(z)=|x|$ proves the first part of the claim.
\begin{figure}[h]
\begin{tikzpicture}
	\begin{pgfonlayer}{nodelayer}
		\node [style=none] (0) at (0.5, 0.525) {};
		\node [style=none] (1) at (-1.75, 0.025) {};
		\node [style=none] (2) at (-0.5, -1.5) {};
		\node [style=none] (3) at (1.75, -1) {};
		\node [style=none] (4) at (-3.75, -0.5) {};
		\node [style=none] (5) at (0.5, -3.25) {};
		\node [style=none] (9) at (-1, 0.825) {$K$};
		\node [style=none] (10) at (-3.25, -1.225) {$L$};
		\node [style=none] (11) at (1.5, 1) {$L_z$};
		\node [style=none] (12) at (-0.325, -3.05) {$z$};
		\node [style=none] (13) at (0.275, -2.125) {$p$};
		\node [style=none] (14) at (1.975, 0.1) {$p+x$};
		\node [style=none] (15) at (2, 1.025) {};
		\node [style=none] (16) at (-0.75, -3.25) {};
		\node [style=black dot] (17) at (1.35, 0) {};
		\node [style=black dot] (18) at (0.225, -1.725) {};
		\node [style=black dot] (20) at (-0.375, -2.675) {};
	\end{pgfonlayer}
	\begin{pgfonlayer}{edgelayer}
		\draw [in=165, out=90, looseness=0.75] (1.center) to (0.center);
		\draw [in=60, out=-15, looseness=0.75] (0.center) to (3.center);
		\draw [in=-30, out=-120] (3.center) to (2.center);
		\draw [in=-90, out=150, looseness=0.75] (2.center) to (1.center);
		\draw (4.center) to (5.center);
		\draw (15.center) to (16.center);
	\end{pgfonlayer}
\end{tikzpicture}
\caption{Construction of the map $F$}
\end{figure}
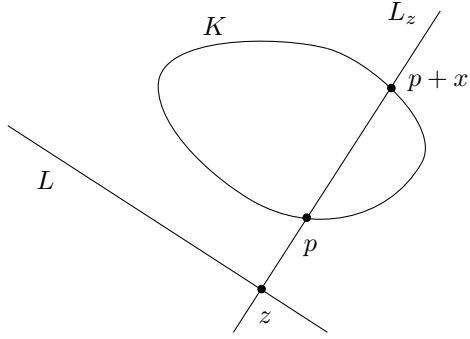

To prove the second part of the claim we fix some $x\in K_0$. We take $p\in C$ such that the segment $[p,p+x]$ is contained in $K$ and $p,p+x\in C$. Assume first that $x\in C_0$. If there were a larger segment $[q,q+\mu x]$ contained in $K$ with $q,q+\mu x\in C$ and $\mu>1$ then we would have $\mu x\in C-q\subset K_0$, a contradiction. Hence the length of $[p,p+x]$ is is the largest possible in the direction of $x$. Assume now that the length of $[p,p+x]$ yields the maximum of length of intervals contained in $K$ in the direction of $x$. If $x\not\in C_0$ then $x$ is an interior point of $K_0$ and, since $0\in\intt(K_0)$, there would exist $\la>1$ such that $\la x\in K_0$. Hence there is some $q\in C$ such that $\la x\in C-p$ and the segment $[q,q+\la x]\subset C$ and has length larger than $|x|$, a contradiction that proves that $x\in C_0$.

\emph{Step 3}. Given any point $x\in\intt(K_0)$, there are exactly two points in $\sph_K$ at heights $g_1(x)>g_2(x)$. In case $K$ is centrally symmetric then $g_1(x)+g_2(x)=2|K|$.

By the previous steps, there are exactly two points $p,q\in C$ so that $p+x,q+x\in C$ and the segments $[p,p+x],[q,q+x]$ are contained in $K$. We may assume that $p,p+x,q+x,q$ are ordered clockwise along $C$. The heights of the points in $\sph_K$ projecting over $x$ are given by twice the areas of the sets $A$ and $B$, where $A$ is determined by the portion of $C$ from $p$ to $p+x$ and the segment $[p+x,p]$, and $B$ is determined by the portion of $C$ from $q$ to $q+x$ and the segment $[q+x,q]$. Since $A$ is properly contained in $B$ we have $g_2(x)=2|A|<2|B|=g_1(x)$.

In case $K$ is centrally symmetric, the central symmetry maps $p+x$ to $q$ and $q+x$ to $p$ since $[p,p+x]$ and $[q,q+x]$ are the only segments in $K$ of length $|x|$ with boundary points on $C$. Hence $|A|+|B|=|K|$ and so $g_1(x)+g_2(x)=2|K|$.

\emph{Step 4}. The functions $g_1,g_2$ are of class $C^\ell$ in $\intt(K_0)\setminus\{0\}$.

This follows from the implicit function theorem since $\sph_K$ is $C^\ell$ outside the poles.
\end{proof}

\begin{theorem}
Let $K\subset\rr^2$ be a convex body of class $C^2_+$. Then $\sph_K$ is of class $C^2$ around the poles.
\end{theorem}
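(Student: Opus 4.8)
The plan is to prove that in a neighbourhood of each of the two poles $(0,0,0)$ and $(0,0,2|K|)$ the sphere $\sph_K$ is a $C^2$ graph over the horizontal plane $t=0$. By Theorem~\ref{thm:graph}, $\sph_K=\mathrm{graph}(g_1)\cup\mathrm{graph}(g_2)$ with $g_1>g_2$ on $\intt(K_0)$, with $g_i$ of class $C^\ell$ away from the origin, and with $g_2(0)=0$, $g_1(0)=2|K|$; hence a neighbourhood of the south pole in $\sph_K$ is exactly the graph of $g_2$ over a neighbourhood of $0\in\intt(K_0)$, and similarly for the north pole and $g_1$. So it is enough to show that $g_2$ extends to a $C^2$ function near $0$. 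Near the north pole one has $g_1=2|K|-2\mathcal A$, where $\mathcal A(x)$ is the area of the small cap of $K$ cut off by the chord realizing the vector $x$, which is of the same nature as $g_2=2\mathcal A$ near the south pole (in the centrally symmetric case this is in any case immediate from $g_1+g_2=2|K|$); so I will only discuss $g_2$ near $0$.

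First I would pass to the parametrization \eqref{eq:parametricSK}. Using $\sin v\cos(u+v)-\cos v\sin(u+v)=-\sin u$ the height reads
\[
t(u,v)=-r(v)\,r(u+v)\sin u+\int_v^{u+v}r^2(\xi)\,d\xi,
\]
and a short Taylor expansion in $u$ (valid since $r\in C^2$, using that the integral term is $C^3$) shows that the coefficients of $u^0,u^1,u^2$ cancel, so that
\[
t(u,v)=\tfrac16\,c(v)\,u^3+o(u^3),\qquad c(v)=r^2+2r'^2-rr''=\bigl|\dot\ga(v)\times\ddot\ga(v)\bigr|=\kappa(v)\,|\dot\ga(v)|^3,
\]
which is strictly positive and continuous precisely because $K$ is of class $C^2_+$. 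At the same time $(x,y)=\ga(u+v)-\ga(v)=\dot\ga(v)\,u+O(u^2)$, and the map $\Xi\colon(u,v)\mapsto(x,y)$ is a $C^\ell$ diffeomorphism from $\{0<u<\eps\}\times(\text{one period in }v)$ onto a punctured neighbourhood of $0$: injectivity follows as in the proof of Theorem~\ref{thm:graph}, and the Jacobian $\dot\ga(v)\times\dot\ga(u+v)=-c(v)\,u+o(u)$ is nonzero for $0<u<\eps$. Passing to polar coordinates $(\rho,\beta)$ in the plane, and using that $v\mapsto\arg\dot\ga(v)$ is a $C^{\ell-1}$ diffeomorphism — again because $\kappa>0$ — one can solve $v=V(\rho,\beta)$ and $u=\rho\,W(\rho,\beta)$ with $W$ continuous up to $\rho=0$, of class $C^\ell$ for $\rho>0$, and $W(0,\beta)>0$. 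Substituting, $g_2(x,y)=\rho^3\,\mathcal G(\rho,\beta)$ with $\mathcal G$ continuous on $\{\rho\ge 0\}$, of class $C^\ell$ for $\rho>0$, and $\mathcal G(0,\beta)=\tfrac16\,\kappa(V(0,\beta))$.

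It remains to upgrade this to $C^2$ regularity at the origin. Since $g_2$ is $C^\ell$ off the origin and $g_2=O(\rho^3)$, it is enough to show that $g_{2,xx},g_{2,xy},g_{2,yy}$ all tend to $0$ as $(x,y)\to0$; then $g_2$ extends to a $C^2$ function with vanishing first and second derivatives at $0$, and likewise for $g_1$ at the north pole. If $K$ is $C^\infty$ this is easy: $\mathcal G$ is then smooth in $(\rho,\beta)$, and a Fourier expansion in $\beta$ gives termwise that $\rho^3$ times a function smooth in $(\rho,\beta)$ has continuous second derivatives vanishing at $0$. For a general $C^2_+$ body one estimates $\nabla^2(t\circ\Xi^{-1})$ directly, feeding the expansions
\[
t=\tfrac16 c(v)u^3+o(u^3),\quad t_u=\tfrac12 c(v)u^2+o(u^2),\quad t_{uu}=c(v)u+o(u),\quad t_v=o(u^2),\quad t_{uv}=o(u),\quad t_{vv}=o(1)
\]
(all valid for $r\in C^2$, the lowest-order terms in $u$ cancelling among themselves) together with the inverse-map bounds $u_x,u_y=O(1)$ and $v_x,v_y=O(1/u)$ into the chain rule, and checking that every term produced is $o(1)$.

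The hard part is exactly this last estimate in the merely $C^2_+$ case. The change of variables $\Xi$ that presents \eqref{eq:parametricSK} as the graph $t=g_2(x,y)$ is genuinely singular at the pole — its Jacobian vanishes to first order — so the derivatives of $\Xi^{-1}$ blow up like $1/u$, while only the second-order Taylor data of $\ptl K$ is available; the cancellations that force $\nabla^2 g_2\to0$ sit right at the threshold permitted by the hypothesis and have to be tracked with care. A workable alternative is to prove the estimate first for smooth $K$, keeping the constants quantitative in terms of $\|\ptl K\|_{C^2}$ and a positive lower bound for $\kappa$, and then pass to the limit under a $C^2$-approximation of $K$ by $C^2_+$ convex bodies. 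The remaining ingredients — the reduction to the two poles, the cancellation giving $t\sim\tfrac16 c(v)u^3$, and the passage to polar coordinates — are routine once \eqref{eq:parametricSK} and Theorem~\ref{thm:graph} are available.
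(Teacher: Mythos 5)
Your reduction to showing that the graph functions $g_1,g_2$ of Theorem~\ref{thm:graph} extend to $C^2$ functions at the pole with vanishing first and second derivatives is sound, and the expansion $t(u,v)=\tfrac16 c(v)u^3+o(u^3)$ with $c=\kappa|\dot\ga|^3>0$ is correct. But there is a genuine gap exactly where you flag it: the assertion that feeding the listed bounds into the chain rule shows ``every term produced is $o(1)$'' is false as stated, and the estimate that actually carries the theorem is never performed. In the second-derivative chain rule for $g_2=t\circ\Xi^{-1}$ one meets, among others, the terms $t_{vv}\,v_xv_y$ and $t_v\,v_{xx}$. With only $C^2$ data on $\ptl K$ one gets $t_{vv}=o(u)$ (not $o(u^2)$), while $v_xv_y=O(1/u^2)$ and $v_{xx}=O(1/u^3)$, so these terms are a priori only $o(1/u)$: they do not tend to $0$ individually, and the cancellations among them --- which are the entire content of the statement at $C^2_+$ regularity --- are not exhibited. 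The fallback you propose (prove a quantitative bound for smooth $K$ and approximate) does not repair this without further work: to conclude that the limit surface is $C^2$ you need the second derivatives of the approximating graphs to converge uniformly near the pole, which is precisely the uniform estimate being avoided.

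For comparison, the paper never inverts the degenerate map $(u,v)\mapsto(x,y)$. It stays in the parametrization \eqref{eq:parSK}, writes the unit normal explicitly as $N=\big(h(\dot y X-\dot x Y)+gT\big)/\sqrt{h^2+g^2}$, obtains $C^1$ regularity from $\lim h/g=0$, and then obtains $C^2$ regularity by showing that $D_{\ptl/\ptl u}N$ and $D_EN$ tend to $0$ at the pole; every limit needed reduces to ratios such as $h/g$ and $h/g^2$ whose values follow from second-order Taylor expansions of $\ga$, so only $C^2_+$ is used and no singular inverse map appears. If you wish to keep the graph approach, you must either track the cancellations in $\nabla^2 g_2$ explicitly or switch to an intrinsic criterion (continuity of the unit normal and of the shape operator), as the paper does.
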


\begin{proof}
We consider a horizontal lifting $\Ga=(x,y,t)$ of a clockwise arc-length parametrization $\ga$ of $\ptl K$. Then a parameterization of $\sph_K$ is given by $(\mathbf{x},\mathbf{y},\mathbf{t})(u,v)=\ell_{-\Ga(v)} (\Ga(u+v))$. This means
\begin{equation}
\label{eq:parSK}
\begin{split}
\mathbf{x}(u,v)&=x(u+v)-x(v),
\\
\mathbf{y}(u,v)&=y(u+v)-y(v),
\\
\mathbf{t}(u,v)&=t(u+v)-t(v)-x(u+v)y(v)+y(u+v)x(v).
\end{split}
\end{equation}
The tangent vectors $\ptl/\ptl u, \ptl/\ptl u$ are the image of $(1,0)$ and $(0,1)$ under the parameterization and are given by
\begin{align*}
\frac{\ptl}{\ptl u}&=\dot{x}(u+v)\,X+\dot{y}(u+v)\,Y.
\\
\frac{\ptl}{\ptl v}&=\big(\dot{x}(u+v)-\dot{x}(v)\big)\,X+\big(\dot{y}(u+v)-\dot{y}(v)\big)\,Y+ h(u,v)\,T,
\end{align*}
where
\begin{equation}
\label{eq:defh}
h(u,v)=2\big(\dot{x}(v)(y(u+v)-y(v))-\dot{y}(v)(x(u+v)-x(v))\big).
\end{equation}
Geometrically, $h(u,v)$ is the scalar product of the position vector $(x(u+v)-x(v),y(u+v)-y(v))$ with $J((\dot{x},\dot{y}))$, that is always negative for $u>0$. A Riemannian unit normal vector $N$ can be easily computed from the expressions of $\ptl/\ptl u$ and $\ptl/\ptl v$ and is given by
\begin{equation}
\label{eq:defN}
N=\frac{h\big(\dot{y}(u+v)X-\dot{x}(u+v)Y\big)+gT}{\big(h^2+g^2\big)^{1/2}},
\end{equation}
where 
\begin{equation}
\label{eq:defg}
g(u,v)=\dot{x}(v)\dot{y}(u+v)-\dot{y}(v)\dot{x}(u+v).
\end{equation}
We have
\[
|N_h|=\frac{|h|}{\big(h^2+g^2\big)^{1/2}}, \qquad \escpr{N,T}=\frac{g}{\big(h^2+g^2\big)^{1/2}}
\]

Let us see that $\sph_K$ is a $C^2$ surface near the south pole $(0,0,0)$. The arguments for the north pole of are similar. To see that $\sph_K$ is $C^1$ near the south pole, it is enough to check that $N$ extends continuously to $u=0$. Let us see that
\begin{equation}
\label{eq:limitT}
\lim_{(u,v)\to (0,v_0)} N(u,v)=-T.
\end{equation}
Since $g<0$, from the expression \eqref{eq:defN} it is enough to prove that
\begin{equation}
\label{eq:limithg}
\lim_{(u,v)\to (0,v_0)} \frac{h}{g}(u,v)=0.
\end{equation}
Since $x$ and $y$ are functions of class $C^2$, we use Taylor expansions around $v$ to get
\begin{align*}
x(u+v)&=x(v)+\dot{x}(v) u+R(u,v)u, &
y(u+v)&=y(v)+\dot{y}(v) u+R(u,v)u, &
\\
\dot{x}(u+v)&=\dot{x}(v)+\ddot{x}(v)u+R(u,v)u,
&
\dot{y}(u+v)&=\dot{y}(v)+\ddot{y}(v)u+R(u,v)u.
\end{align*}
In the above equations {$R$ denotes a continuous functions of $(u,v)$} (depending on the equation) that converges to $0$ when $u\to 0$ independently of $v$. This follows from the integral expression for the reminder in Taylor's expansion. Then we have
\begin{align*}
\lim_{(u,v)\to (0,v_0)}\frac{h}{g}(u,v)&=\lim_{(u,v)\to (0,v_0)}\frac{R(u,v)u}{-\kappa(v)u+R(u,v)u}
\\
&=\lim_{(u,v)\to (0,v_0)}\frac{R(u,v)}{-\kappa(v)+R(u,v)}=0,
\end{align*}
where 
\[
\kappa(v)=\big(\dot{y}\ddot{x}-\dot{x}\ddot{y}\big)(v)
\]
is the (positive) geodesic curvature of $\ga$. This proves \eqref{eq:limithg} and so $\sph_K$ is of class $C^1$ around $(0,0,0)$.

To prove that $\sph_K$ is of class $C^2$ around the origin it is enough to show that the Riemannian second fundamental form of $\sph_K$ converges to $0$ when $(u,v)\to (0,v_0)$. We first compute
\[
\lim_{(u,v)\to (0,v_0)} D_{\ptl/\ptl u} N.
\]
Since
\begin{equation}
\label{eq:DuN}
\begin{split}
D_{\ptl/\ptl u}N&=\frac{\ptl}{\ptl u}\bigg(\frac{h\dot{y}(u+v)}{\sqrt{h^2+g^2}}\bigg)\,X
-\frac{\ptl}{\ptl u}\bigg(\frac{h\dot{x}(u+v)}{\sqrt{h^2+g^2}}\bigg)\,Y
+\frac{g}{\sqrt{h^2+g^2}}\,J(\tfrac{\ptl}{\ptl u})
\\
&+\bigg(\frac{\ptl}{\ptl u}\bigg(\frac{g}{\sqrt{h^2+g^2}}\bigg)+\frac{h}{\sqrt{h^2+g^2}}\bigg)\,T.
\end{split}
\end{equation}
A direct computation taking into account $\tfrac{\ptl h}{\ptl u}=2g$ yields
\begin{align*}
\frac{\ptl}{\ptl u}\bigg(\frac{h}{\sqrt{h^2+g^2}}\bigg)&=\frac{2g^3-gh\tfrac{\ptl g}{\ptl u}}{(h^2+g^2)^{3/2}},
&
\frac{\ptl}{\ptl u}\bigg(\frac{g}{\sqrt{h^2+g^2}}\bigg)&=\frac{h^2\tfrac{\ptl g}{\ptl u}-2g^2h}{(h^2+g^2)^{3/2}}.
\end{align*}
It is straightforward to check from the Taylor expressions that
\[
\lim_{(u,v)\to (0,v_0)}\frac{h}{g^2}(u,v)=\lim_{(u,v)\to (0,v_0)}\frac{-\kappa(v_0)u^2+R(u,v)u^2}{\kappa(v_0)^2u^2+R(u,v)u^2}=\frac{-1}{\kappa(v_0)}.
\]
Then we immediately get, dividing by $-g^3$,
\[
\lim_{(u,v)\to (0,v_0)} \frac{\ptl}{\ptl u}\bigg(\frac{h}{\sqrt{h^2+g^2}}\bigg)=\lim_{(u,v)\to (0,v_0)}\frac{-2+\tfrac{h}{g^2}\tfrac{\ptl g}{\ptl u}}{((\tfrac{h}{g})^2+1)^{3/2}}=-1
\]
and
\[
\lim_{(u,v)\to (0,v_0)} \frac{\ptl}{\ptl u}\bigg(\frac{g}{\sqrt{h^2+g^2}}\bigg)=\lim_{(u,v)\to (0,v_0)}\frac{-\tfrac{h}{g}\tfrac{h}{g^2}\tfrac{\ptl g}{\ptl u}+2\tfrac{h}{g}}{((\tfrac{h}{g })^2+1)^{3/2}}=0.
\]
Taking limits in \eqref{eq:DuN} we get
\[
\lim_{(u,v)\to (0,v_0)}D_{\ptl/\ptl u}N=J(\tfrac{\ptl}{\ptl u})-J(\tfrac{\ptl}{\ptl u})+0=0.
\]

We complete $\tfrac{\ptl}{\ptl v}$ to an orthonormal basis of the tangent plane by adding the vector \[
E=\frac{\tfrac{\ptl}{\ptl v}-\escpr{\tfrac{\ptl}{\ptl u},\frac{\ptl}{\ptl v}}\tfrac{\ptl}{\ptl u}}{(1-\escpr{\tfrac{\ptl}{\ptl u},\tfrac{\ptl}{\ptl v}}^2)^{1/2}}.
\] 
Since $\lim_{(u,v)\to (0,v_0)}\tfrac{\ptl}{\ptl v}=0$, we have
\begin{align*}
\lim_{(u,v)\to (0,v_0)}D_EN&=\lim_{(u,v)\to (0,v_0)}D_{\ptl/\ptl v}N
\\
&=\lim_{(u,v)\to (0,v_0)}\bigg(- \frac{\ptl}{\ptl v}\bigg(\frac{h}{\sqrt{h^2+g^2}}\bigg) J(\tfrac{\ptl}{\ptl u})+\frac{\ptl}{\ptl v}\bigg(\frac{g}{(h^2+g^2)^{1/2}}\bigg)\bigg).
\end{align*}
A computation shows that
\[
\frac{\ptl}{\ptl v}\bigg(\frac{h}{\sqrt{h^2+g^2}}\bigg)=\frac{g^2\tfrac{\ptl h}{\ptl v}-gh\tfrac{\ptl g}{\ptl v}}{(h^2+g^2)^{3/2}}, \qquad
\frac{\ptl }{\ptl v}\bigg(\frac{g}{\sqrt{h^2+g^2}}\bigg)=\frac{h^2\tfrac{\ptl g}{\ptl v}-gh\tfrac{\ptl h}{\ptl v}}{(h^2+g^2)^{3/2}}.
\]
We trivially have
\[
\lim_{(u,v)\to (0,v_0)}\frac{\ptl h}{\ptl v}(u,v)=\lim_{(u,v)\to (0,v_0)}\frac{\ptl g}{\ptl v}(u,v)=0.
\]
Hence
\[
\lim_{(u,v)\to (0,v_0)}\frac{\ptl }{\ptl v}\bigg(\frac{g}{\sqrt{h^2+g^2}}\bigg)
=\lim_{(u,v)\to (0,v_0)}\frac{-\tfrac{h}{g}\tfrac{h}{g^2}\tfrac{\ptl g}{\ptl v}+\tfrac{h}{g^2}\tfrac{\ptl h}{\ptl v}}{((\tfrac{h}{g})^2+1)^{3/2}}=0.
\]
On the other hand
\[
\lim_{(u,v)\to (0,v_0)}\frac{\ptl}{\ptl v}\bigg(\frac{h}{\sqrt{h^2+g^2}}\bigg)
=\lim_{(u,v)\to (0,v_0)}\frac{-\tfrac{1}{g}\tfrac{\ptl h}{\ptl v}+\tfrac{h}{g^2}\tfrac{\ptl g}{\ptl v}}{(h^2+g^2)^{3/2}}=0.
\]
This equality holds from the Taylor expansions since
\[
\lim_{(u,v)\to (0,v_0)}\frac{1}{g}\frac{\ptl h}{\ptl v}(u,v)=\lim_{(u,v)\to (0,v_0)}\frac{R(u,v)u}{-\kappa(v)u+R(u,v)u}=0.
\]
So we conclude that $\lim_{(u,v)\to (0,v_0)}D_EN=0$.
\end{proof}

\section{Minimization property of the Pansu-Wulff shapes}
\label{sec:calibration}

We prove in this section a minimization property satisfied by the balls $\bb_K$. Let $K$ be a convex body containing $0$ in its interior. We assume that $K$ is of class $C^\ell_+$, with $\ell\ge 2$.

\begin{remark}
Existence of isoperimetric regions in Carnot and nilpotent groups endowed with a sub-Finsler norm is proved in \cite{pozuelo-nilpotent}. In the Heisenberg group $\hh^1$ with a sub-Finsler norm this is done in \cite[Thm.~3.1]{monti-finsler}. Proofs are based on Leonardi-Rigot's paper \cite{MR2000099}.
\end{remark}

%


\begin{definition}
\label{def:g}
Given $\sph_K$, we let $g:K_0\to\rr$ be the function $g(x)=(g_1(x)+g_2(x))/2$,  where $g_1$ and $g_2$ are the functions obtained in Theorem~\ref{thm:graph}. 

We also introduce the notation $\mathbb{S}_K^+: =\mathbb{S}_K\cap\{(x,t) : t\geq g(x)\}$, $\mathbb{S}_K^-:=\mathbb{S}\cap\{(x,t) : t\leq g(x)\}$ and $D_0=\{(x,g(x)) : x\in K_0\}$.
\end{definition}

\begin{theorem}
\label{cal:1}
Let $\norm{\cdot}_K$ be the norm associated to a convex body $K\subset\rr^2$ of class $C^\ell_+$, with $\ell\ge 2$. Let $r>0$ and $h:rK_0\to\rr$ a $C^0$ function. Consider a subset $E\subset\hh^1$ with finite volume and finite $K$-perimeter such that
\begin{equation*}
\text{\emph{graph}}(h)\subseteq E\subset rK_0\times\mathbb{R}.
\end{equation*} 
Then
\begin{equation}\label{thm}
|\partial E|_K\geq |\partial\mathbb{B}_E|_K,
\end{equation}
where $\mathbb{B}_E$ is the Wulff shape in $(\hh^1,\norm{\cdot}_K)$ with $|E|=|\mathbb{B}_E|$. 
\end{theorem}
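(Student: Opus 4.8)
The proof is a calibration argument adapted to the cylinder $rK_0\times\rr$ and to the Pansu--Wulff shape $\bb_E$. Two reductions come first. The anisotropic dilations $\varphi_s(x,t)=(e^sx,e^{2s}t)$ multiply $|\ptl\,\cdot\,|_K$ by $e^{3s}$ and the volume by $e^{4s}$, carry $rK_0\times\rr$ into $(e^sr)K_0\times\rr$, and send $\bb_E$ to $\bb_{\varphi_s(E)}$; since both sides of \eqref{thm} scale by $e^{3s}$ we may assume $|E|=|\bb_K|$, hence $\bb_E=\bb_K$, and then Corollary~\ref{cor:minkowski} (with $H=1$) gives $|\ptl\bb_E|_K=A_K(\sph_K)=\tfrac43|\bb_K|=:\lambda|E|$. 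Moreover a horizontal plane $\{t=\text{const}\}$ has $N_h=0$ and thus carries no $K$-perimeter, so replacing $E$ by $E\cap\{|t|<M\}$ does not increase $|\ptl E|_K$ while $|E\cap\{|t|<M\}|\uparrow|E|$ and $|\ptl\bb_{E\cap\{|t|<M\}}|_K\to|\ptl\bb_E|_K$ by scaling; hence it suffices to treat $E$ with $\overline E$ compact.

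The heart of the proof is the construction of a \emph{calibration}: a horizontal vector field $\zeta$ on $rK_0\times\rr$ (off the vertical axis and the singular set of $\sph_K$) with the properties (i) $\norm{\zeta_p}_K\le1$ for all $p$; (ii) $\zeta=\eta_K=\pi_K(\nu_h)$ along $\sph_K=\ptl\bb_E$, so that $\escpr{\zeta,N}=\norm{N_h}_{K,*}$ there; and (iii) $\divv\zeta=\lambda$ on $\bb_E$, $\divv\zeta\le\lambda$ on $(rK_0\times\rr)\setminus\bb_E$, with a nonnegative jump of $\zeta$ across $\operatorname{graph}(h)$ and across the middle disk $D_0$ of Theorem~\ref{thm:graph}, so that the distributional divergence of $\zeta$ equals $\lambda\,d\hh^1$ plus a nonnegative measure supported on those surfaces. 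One assembles $\zeta$ from the foliation of $\bb_K$ by the Wulff curves $\Ga_v$ --- horizontal liftings of translations of $\ptl K$ (Lemma~\ref{eq:wulffcurve}) --- and their homothetic expansions, setting $\zeta$ equal to the $\eta_K$ of the ambient leaf at each point; the halves $\sph_K^\pm$ are matched across $D_0$, the locus where this foliation degenerates, and the field is continued to the part of the cylinder outside $\bb_E$ by translating the leaves and breaking them along $\operatorname{graph}(h)$. That $\divv\zeta=\lambda$ on $\bb_E$ is checked leaf by leaf from the defining ODE \eqref{eq:wulff-curves} of the Wulff curves (Corollary~\ref{cor}) together with the constant mean curvature identity $\divv_{\sph_K}\eta_K=1$ and the Minkowski relation.

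Granting such a $\zeta$, the conclusion follows from the divergence theorem. Approximating $E$ by smooth sets (or using the Gauss--Green formula for sets of finite perimeter), and using (i) with $\escpr{\zeta,\nu_E}=\escpr{\zeta,(\nu_E)_h}\le\norm{(\nu_E)_h}_{K,*}$ together with the representation \eqref{eq:K-area} of the $K$-perimeter,
\[
|\ptl E|_K=\int_{\ptl^*E}\norm{(\nu_E)_h}_{K,*}\ \ge\ \int_{\ptl^*E}\escpr{\zeta,\nu_E}\ =\ \int_E\divv\zeta .
\]
Because $\operatorname{graph}(h)\subseteq E$, the nonnegative singular part of $\divv\zeta$ is entirely collected by $E$, while on $\bb_E$ one has $\divv\zeta=\lambda$ and, by (ii), the boundary integral of $\zeta$ over $\sph_K$ equals $A_K(\sph_K)=|\ptl\bb_E|_K$ (the two poles being negligible). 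Combining this with $\divv\zeta\le\lambda$ off $\bb_E$ and with $|E|=|\bb_E|$ --- so that $E\setminus\bb_E$ and $\bb_E\setminus E$ have equal volume --- yields $\int_E\divv\zeta\ge\int_{\bb_E}\divv\zeta=|\ptl\bb_E|_K$, which is \eqref{thm}.

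I expect the genuine obstacle to be the construction of $\zeta$ and the verification of (iii): inside $\bb_E$ the divergence identity is a direct computation with the Wulff-curve ODE, but making $\zeta$ globally well defined with $\norm{\zeta}_K\le1$ on the (possibly very wide) cylinder, gluing $\sph_K^\pm$ across $D_0$ with the correct divergence, and arranging the jump along $\operatorname{graph}(h)$ so that the contribution of $E\setminus\bb_E$ is absorbed against that of $\bb_E\setminus E$ is the delicate part --- and this last point is precisely where the restriction $\operatorname{graph}(h)\subseteq E\subset rK_0\times\rr$ is used.
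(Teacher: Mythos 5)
Your overall strategy (a calibration built from the foliation by horizontal liftings of translates of $\ptl K$, combined with the divergence theorem) is in the same spirit as the paper's, but two of your reductions break the argument as written. First, the sign in your condition (iii) is the wrong one for the conclusion you draw from it: if $\divv\zeta=\lambda$ on $\bb_E$ and $\divv\zeta\le\lambda$ on the complement, then using $|E|=|\bb_E|$,
\begin{equation*}
\int_E\divv\zeta\,d\hh^1-\int_{\bb_E}\divv\zeta\,d\hh^1=\int_{E\setminus\bb_E}\big(\divv\zeta-\lambda\big)\,d\hh^1\le 0,
\end{equation*}
so you obtain $\int_E\divv\zeta\le\int_{\bb_E}\divv\zeta$, the reverse of what you need. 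For the volume-exchange step to close you need $\divv\zeta\ge\lambda$ off $\bb_E$ (equality is what the natural vertically-translated field actually gives, but only on the cylinder over the projection of the calibrating sphere).

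Second, and more seriously, the normalization $|E|=|\bb_K|$ does not remove the difficulty it is meant to remove: after dilating, $E$ sits in $r'K_0\times\rr$ with $r'$ unrelated to $1$, so you must produce a horizontal field with $\norm{\zeta}_K\le 1$ and $\divv\zeta\ge 1$ on the whole (possibly very wide) cylinder, agreeing with $\eta_K$ on $\sph_K$. The field translated vertically from $r'\sph_K$ has divergence $1/r'<1$, and a bounded horizontal field with divergence bounded below by $1$ on $(r'K_0\setminus K_0)\times\rr$ is not available in general; this is exactly the obstruction you flag at the end but do not resolve. The paper's route is structurally different at this point: it calibrates with the sphere $r\sph_K$ adapted to the cylinder, using \emph{two} fields $W^\pm$ (vertical translates of $\pi_K(\nu_0)$ on the halves $r\sph_K^\pm$, each with divergence $1/r$ off the axis), proves the inequality $\tfrac1r|F|\le\int_D\escpr{W^+-W^-,N_D}\,dD+|\ptl F|_K$ for any admissible $F$ containing the middle graph $D$, and applies it both to a model set built from $r\bb_K$ (where it is an equality) and to a set built from $E$ split along $\operatorname{graph}(h)$ --- this is where $\operatorname{graph}(h)\subseteq E$ enters --- then subtracts, so that the $D$-term and the lateral terms cancel; no sign condition on the jump across $D$ is needed or used. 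This yields $|\ptl E|_K\ge|\ptl(r\bb_K)|_K+\tfrac1r\big(|E|-|r\bb_K|\big)$, and a final one-variable convexity argument (using Proposition~\ref{prop:varform} to differentiate $\rho\mapsto|\ptl(\rho\bb_K)|_K-\tfrac1\rho|\rho\bb_K|$) shows the right-hand side is minimized precisely at the $\rho$ with $|\rho\bb_K|=|E|$, giving $|\ptl\bb_E|_K$. Your proposal has no counterpart to this optimization step, and it is what reconciles the cylinder radius with the volume-matched ball.
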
	

\begin{proof}
Let $g_r:rK_0\to\rr$ the function defined by $g_r(x)=r^2g(\tfrac{1}{r}x)$, where $g$ is the function in Definition~\ref{def:g}. Let $D$ be the graph of $g_r$. We know that $D$ divides the Wulff shape $r\sph_K$ into two parts $r\sph_K^+$ and $r\sph_K^-$. Let $W^+$ and $W^-$ the vector fields in $rK_0\times \rr\setminus L$ defined by translating vertically the vector fields
\[
\pi_K(\nu_0)\big|_{r\sph_K^+}, \quad \pi_K(\nu_0)\big|_{r\sph_K^-},
\]
respectively. Here $\nu_0$ is the horizontal unit normal to $\sph_K$.

As a first step in the proof we are going to show that if $F\subset rK_0\times\rr$ is a set of finite volume and $K$-perimeter so that $\text{rel int}(D)\subset\intt(F)$, then the inequality
\begin{equation}\label{Sus0}
\tfrac{1}{r}|F|\leq \int_{D}\langle W^+-W^-,N_D\rangle dD+|\partial F|_K
\end{equation}
holds, where $N_D$ is the Riemannian normal pointing down and $dD$ is the Riemannian measure of $D$. Equality holds in \eqref{Sus0} if and only if $W^+=\pi_K(\nu_h)$ $|\ptl_KF|$-a.e. on $F^+=F\cap\{t\ge g_r\}$ and $W^-=\pi_K(\nu_h)$ $|\ptl_KF|$-a.e. on $F^-=F\cap\{t\le g_r\}$. Here $\nu_h$ is the horizontal unit normal to $F$.

To prove \eqref{Sus0} we consider two families of functions. For $0<\eps<1$ we consider smooth functions $\varphi_{\eps}$, depending on the Riemannian distance to the vertical axis $L=\{x=y=0\}$, so that $0\le\varphi_\eps\le 1$ and
\begin{align*}
\varphi_{\eps}(p)= 0, \qquad & d(p,L)\le\eps^2, \\
\varphi_{\eps}(p)= 1, \qquad & d(p,L)\ge\eps, \\
|\nabla\varphi_{\eps}(p)|\le 2/\eps, \qquad& \eps^2\le d(p,L)\le\eps.
\end{align*}
Again for $0<\eps<1$ we consider smooth functions $\psi_{\eps}$, depending on the Riemannian distance to the Euclidean hyperplane $\Pi_{0}=\{t=0\}$, so
that $0\le\psi_\eps\le 1$ and
\begin{align*} 
\psi_{\eps}(p)=1, \qquad & d(p,\Pi_{0})\le \eps^{-1/2}, \\
\psi_{\eps}(p)=0, \qquad & d(p,\Pi_{0})\ge \eps^{-1/2}+1, \\
|\nabla\psi_{\eps}(p)|\le 2, \qquad&\eps^{-1/2}\le d(p,\Pi_{0})\le
\eps^{-1/2}+1.
\end{align*}
%
For any $\eps>0$, the vector field $\varphi_\epsilon\psi_\varepsilon W$ has compact support.

It is easy to prove that  $F^{+}$ and $F^-$ have finite $K$-perimeter. Since $F^+$ has also finite (sub-Riemannian) perimeter, applying the Divergence Theorem to $F^+$ and the horizontal vector field $\varphi_\epsilon\psi_\varepsilon W^+$, we have
\begin{equation}\label{Calproof:1}
\begin{split}
\int_{F^+}\text{div}(\varphi_\epsilon\psi_\varepsilon W^+)d\hh^1&=\int_{D}\langle\varphi_\epsilon\psi_\varepsilon W^+,N_D\rangle dD
\\
&+\int_{\{t>g_r\}}\langle\varphi_\epsilon\psi_\varepsilon W^+,\nu_h\rangle d|\partial F|.
\end{split}
\end{equation}
Where $N_D$ is the Riemannian unit normal to $D$ pointing into $F^-$, $dD$ is the Riemannian area element on $D$, and $\nu_h$ is the outer horizontal unit normal to $F$.

We take limits in the left hand side of Equation \eqref{Calproof:1} when $\varepsilon\to0$.  We write
\begin{equation}\label{cuenta1}
\int_{F^+}\text{div}(\varphi_\epsilon\psi_\varepsilon W^+)d\hh^1=\int_{F^+}\varphi_\epsilon\psi_\varepsilon \divv W^+ d\hh^1+\int_{F^+}\langle\nabla(\varphi_\epsilon\psi_\varepsilon), W^+\rangle d\hh^1.
\end{equation}
Since $\langle\varphi_\varepsilon\nabla\psi_\varepsilon,W^+\rangle$ is bounded and converges pointwise to $0$, and
\[
\int_{F^+}\langle\psi_\varepsilon\nabla\varphi_\varepsilon,W^+\rangle\leq\int_{\{(x,t): \varepsilon^2<|x|<\varepsilon, \ 0<t<\varepsilon^{-1/2}+1 \}}\psi_\varepsilon|\nabla\varphi_\epsilon| d\hh^1,
\]
we have
\begin{equation}\label{cuenta2}
\lim_{\varepsilon\to 0}\int_{F^+}\langle\nabla(\varphi_\epsilon\psi_\varepsilon),W^+\rangle d\hh^1=0.
\end{equation}
On the other hand, $\divv W^+=\tfrac{1}{r}$, the mean curvature of $r\bb_K$. We consider the orthonormal  vectors $Z=-J(\nu_h)$, $E=\escpr{N,T}\,\nu_h-|\nu_h|\,T$ and $N$, globally defined on $(rK_0\times\rr)\setminus L$ by vertical translations. We know from Lemma~\ref{lem:defH} that
\[
\escpr{D_Z W^+,Z}=\tfrac{1}{r}, \qquad \escpr{D_E W^+ ,E}=2\,\escpr{N,T}|N_h|\,\escpr{W^+,J(\nu_h)}.
\]
It remains to compute $\escpr{D_N W^+,N}$. We express $N=\la E+\mu T$ as a linear combination of $E$ and $T$, where $\la=|N_h|/\escpr{N,T}$, $\mu=1/\escpr{N,T}$. Observe that $\escpr{N,T}\neq 0$ on $\intt(K_0)$ since $r\sph_K^+$ is a $t$-graph. So we have
\begin{align*}
\escpr{D_N W^+,N}&=\la\escpr{D_E W^+,N}+\mu\escpr{D_T W^+,N}
\\
&=\la^2\escpr{D_EW^+,E}+\la\mu\escpr{D_EW^+,T}+\mu\escpr{J(W^+),N_h}
\\
&=\la^2\escpr{D_EW^+,E}-\la\mu\escpr{N,T}\escpr{W^+,J(\nu_h)}-\mu|N_h|\escpr{W^+,J(\nu_h)}
\\
&=\bigg(\frac{|N_h|}{\escpr{N,T}}\bigg)^2\escpr{D_E W^+,E}-\frac{1}{\escpr{N,T}^2}\,\escpr{D_EW^+,E}
\\
&=\escpr{D_E W^+,E},
\end{align*}
where we have used that $D_T W^+=J(W^+)$ since $W^+$ is a linear combination of $W^+,Y$ multiplied by functions that do not depend on $t$.
%
Hence
\begin{equation*}
\divv W^+=\escpr{D_Z W^+,Z}+\escpr{D_E W^+,E}+\escpr{D_N W^+,N}=\frac{1}{r}
\end{equation*}
on $\intt(K_0)$. Since $\varphi_\epsilon\psi_\varepsilon\divv W^+$ is uniformly bounded, $F^+$ has finite volume and $\lim_{\varepsilon\to 0}\varphi_\epsilon\psi_\varepsilon=1$, we can apply Lebesgue's Dominated Convergence Theorem to get
\begin{equation}\label{cuenta3}
\lim_{\varepsilon\to 0}\int_{F^+}\varphi_\epsilon\psi_\varepsilon\divv W^+ d\hh^1=\tfrac{1}{r}|F^+|.
\end{equation}
So we get from \eqref{cuenta1}, \eqref{cuenta2} and \eqref{cuenta3}
\begin{equation}
\label{eq:limitdiv}
\lim_{\eps\to 0}\int_{F^+}\divv(\varphi_\eps\psi_\eps W^+)\,d\hh^1=\tfrac{1}{r}\,|F^+|.
\end{equation}

Now we treat the remainings terms in \eqref{Calproof:1}. Using the representation of perimeter obtained in \eqref{eq:rnfinal} for sets of finite $K$-perimeter sets we have
\begin{equation}\label{cuenta4}
\int_{\{t>g_r\}}\langle W^+,\nu_h\rangle d|\partial F|\leq\int_{\{t>g_r\}}\|\nu_h\|_* d|\partial F|=|\partial F^+|_K,
\end{equation}
with equality if and only if $W^+=\pi(\nu_h)$ $|\partial F|$-a.e. on $\{t>g_r\}$. From equations \eqref{eq:limitdiv} and \eqref{cuenta4}, taking limits in Equation \eqref{Calproof:1} when $\varepsilon\to0$, 
\begin{equation}\label{Calproof:2}
\tfrac{1}{r}|F^+|\leq\int_{D}\langle W^+,N_D\rangle dD+|\partial F^+|_K,
\end{equation}
with equality if and only if $W^+=\pi(\nu_h)$ $|\partial F|$-a.e. on $\partial F\cap\{t>g_r\}$.

We consider now the foliation of $rK_0\times\rr$ by vertical translations of $r\mathbb{S}_K^-$.  Reasoning as in the previous case we get
\begin{equation}\label{Calproof:3}
\tfrac{1}{r}|F^-|\leq -\int_{D}\langle W^-,N_D\rangle dD+|\partial F^-|_K.
\end{equation}
with equality if and only if $W^-=\pi(\nu_h)$ $|\partial F|$-a.e. on $\partial \{t<g_r\}$.
Hence, adding \eqref{Calproof:2} and \eqref{Calproof:3}, and taking into account $|\partial F|_K(\hh^1\setminus D)= |\partial F|_K$ and that $F\cap D$ does not contribute to the volume of $F$, we get
\begin{equation*}
\tfrac{1}{r}|F|\leq \int_{D}\langle W^+-W^-,N_D\rangle dD+|\partial F|_K,
\end{equation*}
and so \eqref{Sus0} holds, with equality if and only if equalities \eqref{Calproof:2} and \eqref{Calproof:3} hold. This completes the first part of the proof.

Recall that $h:rK_0\to\rr$ is a function so that $D=\text{graph}(h)\subset E$. We take two values $t_m<t_M$ such that
\[
h+t_m< g_r< h+t_M.
\]
We apply inequality \eqref{Sus0} to the set $B=B^-\cup B^0\cup B^+$, where
\begin{itemize}
\item $B^0=\{(x,t) : x\in rK_0, |t-g_r|\le (t_M-t_m)/2\}$, 
\item $B^+=r\bb_K^++(0,(t_M-t_m)/2)$,
\item $B^-=r\bb_K^--(0,(t_M-t_m)/2)$.
\end{itemize}

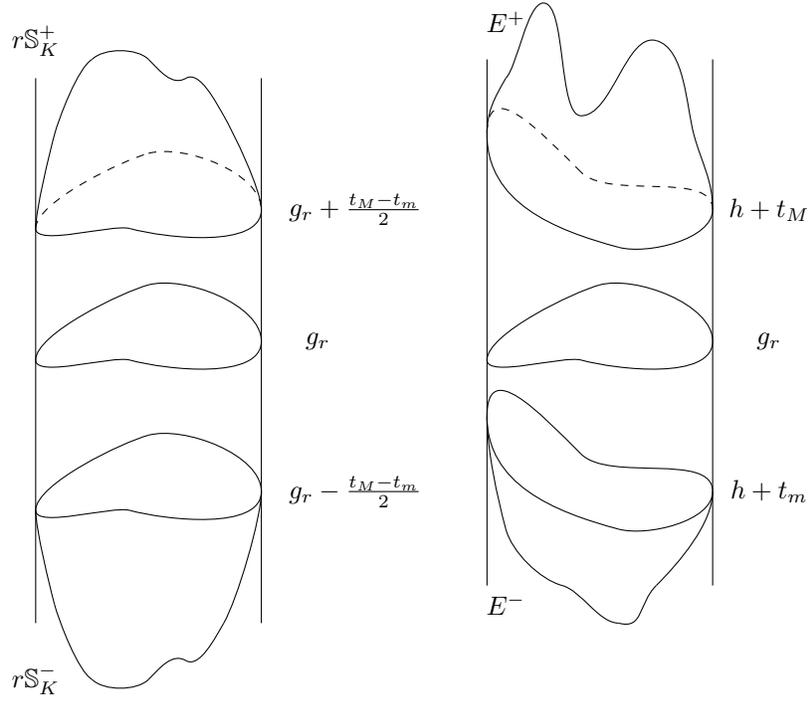
\begin{figure}[h]
\begin{tikzpicture}
	\begin{pgfonlayer}{nodelayer}
		\node [style=none] (0) at (-4, 2.75) {};
		\node [style=none] (1) at (-4, -4.5) {};
		\node [style=none] (2) at (-1, 2.75) {};
		\node [style=none] (3) at (-1, -4.5) {};
		\node [style=none] (4) at (2, 3) {};
		\node [style=none] (6) at (5, 3) {};
		\node [style=none] (8) at (2, -4) {};
		\node [style=none] (9) at (5, -4) {};
		\node [style=none] (14) at (-4, 0.75) {};
		\node [style=none] (16) at (-1, 1) {};
		\node [style=none] (17) at (-2.75, 0.75) {};
		\node [style=none] (18) at (2, 2) {};
		\node [style=none] (19) at (5, 1) {};
		\node [style=none] (21) at (3.75, 0.5) {};
		\node [style=none] (22) at (2, -1.75) {};
		\node [style=none] (23) at (5, -2.75) {};
		\node [style=none] (24) at (3.25, -2.25) {};
		\node [style=none] (25) at (3.75, -3.25) {};
		\node [style=none] (26) at (0.25, 1) {$g_r+\frac{t_M-t_m}{2}$};
		\node [style=none] (28) at (-4, -3) {};
		\node [style=none] (29) at (-2.5, -2) {};
		\node [style=none] (30) at (-1, -2.75) {};
		\node [style=none] (31) at (-2.75, -3) {};
		\node [style=none] (32) at (0.25, -2.75) {$g_r-\frac{t_M-t_m}{2}$};
		\node [style=none] (37) at (-0.25, -0.75) {$g_r$};
		\node [style=none] (38) at (5.75, 1) {$h+t_M$};
		\node [style=none] (39) at (5.75, -0.75) {$g_r$};
		\node [style=none] (40) at (5.75, -2.75) {$h+t_m$};
		\node [style=none] (41) at (-3.75, 2) {};
		\node [style=none] (42) at (-3.25, 3) {};
		\node [style=none] (43) at (-2.5, 3) {};
		\node [style=none] (44) at (-2, 2.75) {};
		\node [style=none] (45) at (-1.5, 2.25) {};
		\node [style=none] (46) at (-4, -3) {};
		\node [style=none] (47) at (-1, -2.75) {};
		\node [style=none] (48) at (-3.75, -4.25) {};
		\node [style=none] (49) at (-3.25, -5.25) {};
		\node [style=none] (50) at (-2.5, -5.25) {};
		\node [style=none] (51) at (-2, -5) {};
		\node [style=none] (52) at (-1.5, -4.5) {};
		\node [style=none] (53) at (2.25, 2.75) {};
		\node [style=none] (54) at (2.75, 3.75) {};
		\node [style=none] (55) at (3.25, 2.25) {};
		\node [style=none] (56) at (4.25, 3.25) {};
		\node [style=none] (57) at (4.75, 2) {};
		\node [style=none] (58) at (2.25, -3.25) {};
		\node [style=none] (59) at (3, -4) {};
		\node [style=none] (60) at (3.75, -4.5) {};
		\node [style=none] (61) at (4.25, -4) {};
		\node [style=none] (63) at (-4, 3.25) {$r\mathbb{S}_K^+$};
		\node [style=none] (64) at (-4, -5.25) {$r\mathbb{S}_K^-$};
		\node [style=none] (65) at (2.25, 3.5) {$E^+$};
		\node [style=none] (66) at (2.25, -4.25) {$E^-$};
		\node [style=none] (67) at (-4, -1) {};
		\node [style=none] (68) at (-2.5, 0) {};
		\node [style=none] (69) at (-1, -0.75) {};
		\node [style=none] (70) at (-2.75, -1) {};
		\node [style=none] (71) at (2, -1) {};
		\node [style=none] (72) at (3.5, 0) {};
		\node [style=none] (73) at (5, -0.75) {};
		\node [style=none] (74) at (3.25, -1) {};
		\node [style=none] (75) at (-4, 0.75) {};
		\node [style=none] (76) at (-2.5, 1.75) {};
		\node [style=none] (77) at (-1, 1) {};
		\node [style=none] (78) at (2, 2) {};
		\node [style=none] (79) at (5, 1) {};
		\node [style=none] (80) at (3.25, 1.5) {};
	\end{pgfonlayer}
	\begin{pgfonlayer}{edgelayer}
		\draw (0.center) to (1.center);
		\draw (2.center) to (3.center);
		\draw (4.center) to (8.center);
		\draw (6.center) to (9.center);
		\draw [in=-15, out=-90, looseness=0.75] (16.center) to (17.center);
		\draw [in=-90, out=165, looseness=0.50] (17.center) to (14.center);
		\draw [in=165, out=-90] (18.center) to (21.center);
		\draw [in=-90, out=-15, looseness=0.75] (21.center) to (19.center);
		\draw [in=165, out=-90] (22.center) to (25.center);
		\draw [in=-90, out=-15, looseness=0.75] (25.center) to (23.center);
		\draw [in=-45, out=90, looseness=0.75] (23.center) to (24.center);
		\draw [in=90, out=135, looseness=1.50] (24.center) to (22.center);
		\draw [in=-165, out=90, looseness=0.50] (28.center) to (29.center);
		\draw [in=90, out=15, looseness=0.75] (29.center) to (30.center);
		\draw [in=-15, out=-90, looseness=0.75] (30.center) to (31.center);
		\draw [in=-90, out=165, looseness=0.50] (31.center) to (28.center);
		\draw [in=-105, out=90, looseness=0.50] (14.center) to (41.center);
		\draw [in=-150, out=75, looseness=0.50] (41.center) to (42.center);
		\draw [in=135, out=45, looseness=0.75] (42.center) to (43.center);
		\draw [in=-150, out=-45] (43.center) to (44.center);
		\draw [in=120, out=30, looseness=0.75] (44.center) to (45.center);
		\draw [in=90, out=-60, looseness=0.50] (45.center) to (16.center);
		\draw [in=105, out=-90, looseness=0.50] (46.center) to (48.center);
		\draw [in=150, out=-75, looseness=0.50] (48.center) to (49.center);
		\draw [in=-135, out=-45, looseness=0.75] (49.center) to (50.center);
		\draw [in=150, out=45] (50.center) to (51.center);
		\draw [in=-120, out=-30, looseness=0.75] (51.center) to (52.center);
		\draw [in=-90, out=60, looseness=0.50] (52.center) to (47.center);
		\draw [in=240, out=90] (18.center) to (53.center);
		\draw [in=-180, out=45, looseness=0.50] (53.center) to (54.center);
		\draw [in=180, out=0, looseness=0.50] (54.center) to (55.center);
		\draw [in=165, out=0, looseness=0.75] (55.center) to (56.center);
		\draw [in=105, out=-15, looseness=0.75] (56.center) to (57.center);
		\draw [in=90, out=-75] (57.center) to (19.center);
		\draw [in=105, out=-90, looseness=0.75] (22.center) to (58.center);
		\draw [in=165, out=-75, looseness=0.75] (58.center) to (59.center);
		\draw [in=180, out=-15, looseness=0.75] (59.center) to (60.center);
		\draw [in=225, out=-15, looseness=1.25] (60.center) to (61.center);
		\draw [in=-90, out=45, looseness=0.75] (61.center) to (23.center);
		\draw [in=-165, out=90, looseness=0.50] (67.center) to (68.center);
		\draw [in=90, out=15, looseness=0.75] (68.center) to (69.center);
		\draw [in=-15, out=-90, looseness=0.75] (69.center) to (70.center);
		\draw [in=-90, out=165, looseness=0.50] (70.center) to (67.center);
		\draw [in=-165, out=90, looseness=0.50] (71.center) to (72.center);
		\draw [in=90, out=15, looseness=0.75] (72.center) to (73.center);
		\draw [in=-15, out=-90, looseness=0.75] (73.center) to (74.center);
		\draw [in=-90, out=165, looseness=0.50] (74.center) to (71.center);
		\draw [style=dashed, in=-165, out=90, looseness=0.50] (75.center) to (76.center);
		\draw [style=dashed, in=90, out=15, looseness=0.75] (76.center) to (77.center);
		\draw [style=dashed, in=-45, out=90, looseness=0.75] (79.center) to (80.center);
		\draw [style=dashed, in=90, out=135, looseness=1.50] (80.center) to (78.center);
	\end{pgfonlayer}
\end{tikzpicture}
\caption{Geometric construction in the proof of Theorem~\ref{cal:1}}
\end{figure}

By construction, $D=\text{graph}(g_r)\subset B^0$. Since the lateral boundary of $B^0$ is contained in $\ptl(rK_0\times\rr)$ and the outer unit normal to $\ptl(rK_0\times\rr)$ coincides with $W^+$ and $W^-$, the lateral $K$-boundary area of $B^0$ is equal to
\[
(t_M-t_m)\int_{\ptl (rK_0)} \norm{\nu_0}_* d(\ptl(rK_0)),
\]
where $d(\ptl(rK_0))$ is the Riemannian length element of the $C^1$ curve $\ptl(rK_0)$. Hence we get
\begin{equation*}
|\partial B|_K=(t_M-t_m)\int_{\partial (rK_0)}\|\nu_0\|_*d(\partial (rK_0))+|\partial (r\bb_K)|_K.
\end{equation*}
On the other hand, since
\[
|B|=|r\bb_K|+|rK_0|(t_M-t_m),
\]
we obtain 
\begin{equation}
\label{eq:Sus1}
\begin{split}
\tfrac{1}{r}(|r\bb_K|&+|rK_0|(t_M-t_m))= \int_{D}\langle W^+-W^-,N_D\rangle dD
\\
& +(t_M-t_m)\int_{\partial (rK_0)}\|\nu_0\|_*d\partial (rK_0)+|\partial (r\bb_K)|_K.
\end{split}
\end{equation}


Now we apply \eqref{Sus0} to the set $\mathbb{E}$  consisting on the union of $E^+=E\cap\{t\ge h\}$ translated by the vector $(0,t_M)$, $E^-=E\cap\{t\le h\}$ translated by the vector $(0,t_m)$ and the vertical filling in between the two sets. We reason as before to get
\begin{equation}
\label{eq:Sus2}
\begin{split}
\tfrac{1}{r}(|E|+|rK_0|(t_M-t_m))&\leq \int_{D}\langle W^+-W^-,N_D\rangle dD
\\ 
&+(t_M-t_m)\int_{\partial D}\|\nu_0\|_*d\partial D_0+|\partial E|_K.
\end{split}
\end{equation}
From \eqref{eq:Sus1} and \eqref{eq:Sus2} we get  
\begin{equation*}
|\partial E|_K\geq |\partial (r\mathbb{B}_K)|_K+\tfrac{1}{r}(|E|-|r\mathbb{B}|).
\end{equation*}

Let $f(\rho)=|\partial(\rho\mathbb{B}_K)|_K+\tfrac{1}{\rho} (|E|- |\rho\mathbb{B}|)$.  Since $\rho\bb_K$ has mean curvature $\tfrac{1}{\rho}$, 
Theorem \ref{prop:varform} guarantees that the Wulff shape $\rho\bb_K$ is a critical point of $A-\tfrac{1}{\rho}|\cdot|$ for any variation. Therefore $|\partial(\rho\mathbb{B}_K)|_K'-\tfrac{1}{\rho} | \rho\bb_K|'=0$ where primes indicates the derivative with respect to $\rho$. Hence we have
\[
f'(\rho)=-\tfrac{1}{\rho^2}(|E|-|\rho\bb_K|).
\]
So the only critical point of $f$ corresponds to the value $\rho_0$ so that $|\rho_0\bb_K|=|E|$. Since the function $\rho\mapsto |\rho B_K|$ is strictly increasing and takes its values in $(0,+\infty)$, we obtain that $f(\rho)$ is a convex function with a unique minimum at $\rho_0$. Hence we obtain
\begin{equation*}
|\partial E|_K\geq f(r)\geq f(\rho_0)= |\partial(r_0\bb_K)|_K, 
\end{equation*}
which implies \eqref{thm}.
\end{proof}	


\end{document}